\documentclass[12pt]{article}
\usepackage{amssymb,amsfonts,amsmath,amsthm,mathrsfs,enumitem}
\usepackage{mathtools}

\setlength{\oddsidemargin}{-0.04cm}
\setlength{\topmargin}{-15mm}
\setlength{\textheight}{23cm}
\setlength{\textwidth}{16.5cm}

\newtheorem{theorem}{Theorem}[section]
\newtheorem{lemma}[theorem]{Lemma}
\newtheorem{Corollary}[theorem]{Corollary}
\newtheorem{prop}[theorem]{Proposition}
\newtheorem{obs}[theorem]{Remark}
\newtheorem{definition}{Definition}[section]

\usepackage{graphicx}
\usepackage{color}
\usepackage[colorlinks = true,
linkcolor = blue,
urlcolor  = blue,
citecolor = blue,
anchorcolor = blue]{hyperref}


\numberwithin{equation}{section}

\begin{document}
	\title{\bf Classical Sobolev approach for a critical fourth-order Leray-Lions type problem: existence and multiplicity of solutions}
	
	\author{
		\small{{\bf Eduardo. H. Gomes Tavares}\thanks{Email: eduardogomes7107@gmail.com.}}\\
		{\small School of Computer Science and Technology, Dongguan University of Technology,} \\
		{\small 523808, Dongguan,  China.}\medskip\\
		\small {{\bf Angelo Guimarães}}\thanks{ Email: angelo.guimaraes@ueg.br.}\\
		{\small Academic Institute of Sciences and Technology, State University of Goiás,}\\
		{\small 76300-000, Ceres, Brazil.}\medskip\\
		\small {{\bf Edcarlos Domingos da Silva}}\thanks{ Email: edcarlos@ufg.br.}\\
		{\small Institute of Mathematics and Statistics, Federal University of Goias}\\
		{\small 74001-970, Goiania, Brazil.}\medskip\\
		\small {\bf Jin-Yun Yuan}  \footnote{Email: yuanjy@gmail.com (corresponding author).}\\
		{\small School of Computer Science and Technology, Dongguan University of Technology,}\\
		{\small 523808, Dongguan,  China.}}
	
		
		
		
	\date{}
	\maketitle
	
	\begin{abstract}  
		
		\noindent A fourth-order elliptic problem of Leray–Lions type is considered for combined nonlinearities and Sobolev-critical growth with Navier and Dirichlet boundary conditions. By combining variational methods and critical point theory, the existence and multiplicity of weak solutions are established in the setting of classical Sobolev spaces. Two distinct asymptotic regimes are considered for the perturbation term: sublinear and superlinear. In the sublinear case, the existence of infinitely many solutions is proved by using topological tools such as Krasnosel’skii’s genus and Clark’s deformation lemma. In the superlinear case, the existence of at least one nontrivial solution is obtained via the Mountain Pass Theorem. Furthermore, the applicability of the main results is illustrated in the context of Hamiltonian systems.
	\end{abstract}
	
	\noindent{{\bf Keywords:} Fourth-order equation,   Critical exponent, Hamiltonian system}
	\smallskip
	
	\noindent{{\bf 2020 MSC:} 35A01, 35A15, 35B20, 35B33, 35B38}

		\section{Introduction}
		
		Many physical phenomena, particularly those of a static nature, are governed by elliptic differential equations. A classical example arises in elasticity theory: under suitable assumptions, small vertical deflections of a thin plate subjected to a transverse load can be described by the Kirchhoff–Love equation \cite{Timoshenko-Woio}:  
		\begin{equation}\label{viga}  
			D\Delta^2u(x) = -q(x),  \quad x \in \Omega,
		\end{equation} 
		where $\Delta^2=\Delta\circ\Delta$ is the biharmonic operator, $D$ denotes the flexural rigidity of the plate, $\Omega\subset \mathbb{R}^N$ is a reference domain, $u=u(x)$ represents the vertical displacement at a point $x \in \Omega$, and $q=q(x)$ denotes the external load distribution. 
		
		It is evident that solutions of \eqref{viga} can be explicitly computed through successive integration. However, for problems involving more complex operators, it is not easy to establish the existence of solutions. To illustrate this point, consider the fourth-order {\it Leray–Lions} type problem \cite{Leray1965}:  
		\begin{equation}\label{viga2}  
			\Delta(f(x,\Delta u))=Q(x,u), \quad x \in \Omega,  
		\end{equation}  
		where $f, Q:\Omega\times\mathbb{R}\to \mathbb{R}$ satisfy suitable regularity and growth conditions. In this framework, \eqref{viga2} may encompass \eqref{viga} with sources \cite{EdmundsFortunatoJannelli} and also several problems with nonlinear operators, such as $p$-biharmonic equations \cite{wang2008nonuniformly}, weighted $p$-biharmonic equations \cite{Dridi2025}, $p(x)$-biharmonic equations \cite{Kefi2018} and $\phi$-biharmonic equations \cite{do2025hamiltonian}. Besides, \eqref{viga2} also serves as a bridge for conducting a rigorous qualitative analysis of Hamiltonian systems, as can be seen in \cite{bonheure2014hamiltonian, doO2003,do2025hamiltonian,fang2016nontrivial, guimaraes2023hamiltonian, lions}  and references therein. Moreover, due to its versatility,  \eqref{viga2} can be approached within different functional frameworks such as Orlicz–Sobolev spaces \cite{do2025hamiltonian}, Sobolev spaces with variable exponent \cite{MR3842319,Chung02112022} and classical Sobolev spaces \cite{Santos,drabek2001global, el2002spectrum,guimaraes2023hamiltonian}, where the latter framework corresponds to the original approach introduced by Leray and Lions in \cite{Leray1965}. For this reason, it constitutes the direction that will be explored in the present work.

		\subsection{State of the art}

		Regarding the model in \eqref{viga2}, we highlight several noteworthy results concerning the existence of weak and regular solutions in classical Sobolev spaces.
		
		In \cite{bisci2014multiple}, for
		\[
		f(x,t) = t^p, \quad p > 0,
		\]
		the existence of at least two non-trivial solutions was established for \eqref{viga2} in cases where the growth rate of \( Q(\cdot, t) \) varies from sublinear to linear in the subcritical regime. In \cite{ji2012p},  the existence of two distinct positive solutions was obtained for \eqref{viga2} via variational methods under the assumptions
		\[
		f(x,t) = t^p, \qquad Q(x,t)=\mu g(x)|t|^{s-1}t + |t|^{q-1}t
		\]
		where $\mu>0$, $g$ is a smooth function and $1<s<p<q$, implying that $Q(x,t)$ exhibits a concave–convex structure. 
		
		In the critical setting, the analysis becomes more delicate due to the lack of compactness inherent to Sobolev embedding. To overcome these challenges, some strategies were developed inspired by works involving second-order operators such as \cite{ambrosetti1994combined} for concave-convex nonlinearities and \cite{B-N} for sources with superlinear growth. This is initially evidenced in the work \cite{EdmundsFortunatoJannelli}, where sufficient conditions was proposed for the existence of nontrivial solutions to \eqref{viga2} in the critical case when
		\[
		f(x,t) = t, \qquad Q(x,t)=|t|^{8/(N-4)}t + \mu t, \quad N>4,
		\]
		where $\mu>0$. The quasilinear extension involving the \(p\)-biharmonic operator was later investigated in \cite{ederson}. More precisely, considering 
		\[
		f(x,t) = |t|^{1/(p-1)}t, \qquad Q(x,t)=\mu g(x)+ |t|^{q-1}t,
		\]
		where $\mu>0$, $g$ is a smooth function and $p,\, q$ satisfies
		\begin{equation}\label{pqHC}
			p>0, \quad q>0, \quad \frac{1}{p+1} + \frac{1}{q+1}= \frac{N-2}{N},
		\end{equation}
the Nehari manifold method and the Lions’ Concentration-Compactness Principle were combined to analyze the existence, non-existence, and regularity of solutions for \eqref{viga2}.  
		
		As previously mentioned, problem \eqref{viga2} can also appears as an intermediary tool in the study of Hamiltonian systems. In this context, the authors in \cite{EdJe-djairo} investigated the existence and multiplicity of solutions for a unilateral Hamiltonian system with critical growth perturbed by a superlinear subcritical term. In this setting, the corresponding fourth-order problem is derived by considering \eqref{viga2} with
		$$f(x,t) = t^{1/p}, \qquad Q(x,t)=\mu |t|^{s-1}t+ |t|^{q-1}t,$$
		where $\mu>0$, $p,\, q$ satisfies \eqref{pqHC} and $\frac{1}{p}\leq s<q$. More recently, a symmetric Hamiltonian system arising from a bilateral perturbation of a Lane-Emden system was considered in \cite{guimaraes2023hamiltonian}. More precisely, the authors proved the existence of classical positive solutions by using the Mountain Pass Theorem for \eqref{viga2} with
		$$f(x,t) = \mathcal{F}_{\lambda}^{-1}(t), \quad \mathcal{F}_{\lambda}(t)=t^{p}+\lambda t^r, \qquad Q(x,t)=\mu |t|^{s-1}t+ |t|^{q-1}t,$$
		where $\mu,\lambda>0,\, 1\leq rs, \, r<p,\, s<q$ and $p,q$ satisfy \eqref{pqHC}. Further advances in this direction were achieved in \cite{AgudeloRufVelez}, where the same equation from \cite{guimaraes2023hamiltonian} was considered but with $\mu,\lambda >0$, $0<r,s<1$ and $p,q>1$.

	\subsection{Main goal}
	Motivated by the previous discussion, the main goal of this work is to address some unexplored scenarios related to \eqref{viga2} that were not considered in \cite{ederson, EdmundsFortunatoJannelli, guimaraes2023hamiltonian, EdJe-djairo}. To do this, we will analyze the following Leray-Lions type problem:
	\begin{equation}\tag{P}\label{prob}  
		\Delta(f(x,\Delta u)) = \mu g(x)|u|^{s-1}u + |u|^{q-1}u \,\,\text{in } \Omega,
	\end{equation}
	with Navier boundary conditions
	\begin{equation}\tag{NBC}\label{navi}
		u = \Delta u = 0 \,\, \text{on } \partial\Omega,    
	\end{equation}
	or Dirichlet boundary conditions  
	\begin{equation}\tag{DBC}\label{Diri}
		u = \frac{\partial u}{\partial \mathbf{\eta}} = 0 \quad \text{on } \partial\Omega,
	\end{equation}
	where $\mathbf{\eta}$ denotes the outward unit normal vector to the boundary. Here, \( \mu > 0 \), \( g \in C^1(\Omega) \) is a positive function in \(\Omega\) and the pair \((p,q)\) belongs to the critical hyperbola given by \eqref{pqHC}.
	
	Considering a parameter \(r\) with \(0<r<p\), we also distinguish two different regimes for the perturbation exponent \(s\):  
	\begin{itemize}
		\item \textbf{Sublinear perturbation:}
		\begin{equation}\label{rssublinear}
			0<s< \frac{1}{p};
		\end{equation}
		\item \textbf{Superlinear perturbation:}
		\begin{equation}\label{rssuperlinear}
			\frac{1}{r}\leq s <q.
		\end{equation}
	\end{itemize}
	For each $t \in \mathbb{R}$, we assume that \(f(\cdot,t)\in C(\Omega)\) and, for each \(x\in \Omega\), the map \(f(x,\cdot) \in C(\mathbb{R})\) is odd and non-decreasing. Moreover, \(f\) satisfies the following conditions:  
	\begin{enumerate}
		\item[($f_1$)] \emph{Upper growth bound:} \( f(x,t)\leq t^{1/p} \) for every \(t>0\);  
		\item[($f_2$)] \emph{Asymptotic behavior at infinity:}  
		\[
		\lim_{t\to \infty}\frac{f(x,|t|)}{|t|^{1/p}}=1;
		\]
		\item[($f_3$)] \emph{Two-sided control near zero and at infinity:} there exist \(t_0 \geq 0\) and positive constants \(c_p,c_r\) such that 
		\[
		f(x,t)\geq \begin{cases}
			\displaystyle c_r t^{1/r},& 0<t\leq t_0,\\[0.3em]
			\displaystyle c_p t^{1/p},& t> t_0;
		\end{cases}
		\]
		\item[($f_4$)] \emph{Lower bound for primitive:} there exists \(c_q> \tfrac{1}{q+1}\) such that 
		$$F(x,t):=\int_0^t f(x,s)\,ds \geq\begin{cases}
			c_q f(x,t)t  & \text{if}\,\, \eqref{rssublinear}\,\, \text{holds}, \\[10pt]
			\frac{1}{s+1}f(x,t)t & \text{if}\,\, \eqref{rssuperlinear}\,\, \text{holds},
		\end{cases}$$
		for every $(x,t) \in \Omega\times \mathbb{R}$.
	\end{enumerate}

	\medskip 
	Typical examples for $f$ include:
	\begin{itemize}
		\item \(f(x,t) = |t|^{1/p-1}t\);
		\item $f(x,t)=\mathscr{F}^{-1}_\lambda(t)$ where $\mathscr{F}_\lambda(t) = \lambda |t|^{r-1}t + |t|^{p-1}t$;
		\item \(\displaystyle f(x,t) = \log(1+|t|)\, |t|^{\frac{1}{p}-2}t+\frac{|t|^\frac{1}{p}t}{1+|t|}\);
		\item $f(x,t)=\mathscr{G}^{-1}_\lambda(x,t)$ where $\mathscr{G}_\lambda(x,t) = g(x) + |t|^{p-1}t$;
		\item \(f(x,t) = \displaystyle|t|^{\frac{1}{p}-1}t\left(1-\frac{1}{2+|t|+|x|}\right)\).
	\end{itemize}

The above examples emphasize that the imposed assumptions constitute more than just technical constraints - they in fact encompass a large class of nonlinear operators of significant theoretical and applied interest. Moreover, assumption $(f_4)$ can be viewed as an inverse of the Ambrosetti-Rabinowitz condition \cite{ambrosetti1994combined} and it will be useful to prove the compactness of the energy functional associated to \eqref{prob}.

\subsection{Contributions and organization of the work}
	The existence, multiplicity, and regularity of solutions for the class of problems defined by \eqref{prob}-\eqref{navi} and \eqref{prob}-\eqref{Diri} are analyzed in this work. The results are structured as follows:
\begin{itemize}
\item In section \ref{pre}, the functional setting used throughout this work is introduced. The relevant Sobolev-type function spaces are defined, and the notation required for the subsequent variational approach are detailed.

\item In section \ref{ps-sec}, the compactness properties of the energy functional associated to \eqref{prob}-\eqref{navi} and \eqref{prob}-\eqref{Diri} are studied. Specific energy levels for which the energy functional satisfies the Palais-Smale condition are identified.

\item In section \ref{sec-sub}, the problems \eqref{prob}-\eqref{navi} and \eqref{prob}-\eqref{Diri} with sublinear nonlinearities are addressed. The existence of an unbounded sequence of distinct weak solutions is proven through the use of topological tools, specifically Krasnosel’skii’s genus and Clark’s deformation lemma. This reveals the inherent multiplicity of critical points.

\item In section \ref{sec-sup}, the superlinear case is examined. The existence of a nontrivial solution for \eqref{prob}-\eqref{navi} and \eqref{prob}-\eqref{Diri} is achieved by the Mountain Pass Theorem.

\item Finally, in section \ref{sec-connecHam}, the applicability of the main results is illustrated through the study of a class of second-order Hamiltonian systems.
\end{itemize}

	\section{Functional setting}\label{pre}
	
	For all $1\leq \theta \leq \infty$, we denote by $|\cdot|_{\theta}$ the usual norm of the space $L^{\theta}=L^{\theta}(\Omega)$ . We also consider the Banach space
	$$
	E_p := \begin{cases}
		W^{2,\frac{p+1}{p}}(\Omega) \cap W^{1,\frac{p+1}{p}}_0(\Omega) & \text{if}\,\, \eqref{navi} \,\,\text{holds},\\
		W^{2,\frac{p+1}{p}}_0(\Omega) & \text{if}\,\, \eqref{Diri}\,\, \text{holds},
	\end{cases}
	$$
	endowed with the norm
	$$
	\|u\| := |\Delta u|_{\frac{p+1}{p}}.
	$$
	In order to study the existence of weak solutions\footnote{A function \( u \in E_p \) is said to be a weak solution of \eqref{prob}-\eqref{navi} or \eqref{prob}-\eqref{Diri} if, for every \( \phi \in E_p \),
		\[
		\int_\Omega f(x,\Delta u) \Delta \phi \, dx = \int_\Omega \left( \mu g|u|^{s-1} u \phi + |u|^{q-1} u \phi \right) \, dx,
		\] 
		or equivalently, $u$ is a critical point of the functional $I_F$.} of \eqref{prob}-\eqref{navi} and \eqref{prob}-\eqref{Diri}, we define the $C^1$-functional:
	\begin{equation}\label{functional}
		I_F(u):=\displaystyle \int_{\Omega} F(x,\Delta u)dx-\frac{\mu}{s+1}  \int_{\Omega}g|u|^{s+1}dx -\frac{1}{q+1} |u|_{q+1}^{q+1}, \quad u \in E_p.
	\end{equation}
	
	\begin{lemma}\label{prop-fun-h}
		Under assumptions $(f_1)$-$(f_4)$, the functional $I_F(u)$ given by \eqref{functional} satisfies
		$$I_F(u)\geq h(\Vert u \Vert),$$
		with
		\begin{equation}\label{hh}
			h(t) = \begin{cases}
				\displaystyle (2|\Omega|)^{\frac{r-p}{r(p+1)}}C_r t^{\frac{r+1}{r}}-\frac{C_{g,s}\mu}{s+1} t^{s+1} -\frac{S^{-1}}{q+1}t^{q+1} & \text{if}\,\, t \leq  \left(\frac{C_p}{C_r}\right)^\frac{pr}{p-r}|\Omega|^{\frac{p}{p+1}},\\
				C_p  t^{\frac{p+1}{p}} \displaystyle -\frac{C_{g,s}\mu}{s+1} t^{s+1} -\frac{S^{-1}}{q+1}t^{q+1}& \text{if}\,\, t >  \left(\frac{C_p}{C_r}\right)^\frac{pr}{p-r}|\Omega|^{\frac{p}{p+1}},   
			\end{cases}
		\end{equation}
		where $C_p:=c_q\cdot c_p,~C_r:=\frac{r}{r+1}\cdot c_r$, $C_{g,s}=\Lambda_s^{-1}|g|_\infty$,
		\begin{equation*}\label{bestconstants}
			\Lambda_{\theta} := \inf_{\stackrel{u \in E_p,}{|u|_{\theta+1} = 1}} \|u\|,\quad 0<\theta<q,
		\end{equation*}   
		and $S:=\Lambda_q$.
	\end{lemma}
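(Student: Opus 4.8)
The plan is to estimate the three terms of $I_F$ in \eqref{functional} separately: the principal term $\int_\Omega F(x,\Delta u)\,dx$ from below, the two perturbation terms from above, and then to assemble the piecewise bound $h$ of \eqref{hh}. For the principal term I would first derive a pointwise lower bound for $F(x,t)$. Since $f(x,\cdot)$ is odd, $F(x,\cdot)$ is even, so it suffices to treat $t>0$. For $0<t\le t_0$, integrating the first alternative in $(f_3)$ gives directly $F(x,t)=\int_0^t f(x,s)\,ds\ge\int_0^t c_r s^{1/r}\,ds=\tfrac{r}{r+1}c_r\,t^{(r+1)/r}=C_r t^{(r+1)/r}$. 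For $t>t_0$, I would apply $(f_4)$ — which in either regime bounds $F(x,t)$ below by a fixed positive multiple of $f(x,t)t$ — followed by the second alternative in $(f_3)$, i.e.\ $f(x,t)\ge c_p t^{1/p}$, to obtain $F(x,t)\ge C_p t^{(p+1)/p}$. Hence
\[
F(x,t)\ \ge\ \begin{cases}C_r\,|t|^{(r+1)/r}, & |t|\le t_0,\\[3pt] C_p\,|t|^{(p+1)/p}, & |t|>t_0,\end{cases}\qquad (x,t)\in\Omega\times\mathbb{R},
\]
and, integrating over $\Omega$ and splitting the domain according to $A:=\{x\in\Omega:\ |\Delta u(x)|\le t_0\}$,
\[
\int_\Omega F(x,\Delta u)\,dx\ \ge\ C_r\!\int_A|\Delta u|^{\frac{r+1}{r}}dx\ +\ C_p\!\int_{\Omega\setminus A}|\Delta u|^{\frac{p+1}{p}}dx .
\]

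The technical heart is then converting this mixed estimate into a single clean function of $\|u\|=|\Delta u|_{(p+1)/p}$. The assumption $0<r<p$ gives $\frac{r+1}{r}>\frac{p+1}{p}$, so on $A$ Hölder's inequality with the conjugate pair associated to $\theta:=\frac{r(p+1)}{p(r+1)}\in(0,1)$ produces
\[
\int_A|\Delta u|^{\frac{p+1}{p}}dx\ \le\ \Big(\int_A|\Delta u|^{\frac{r+1}{r}}dx\Big)^{\theta}|A|^{1-\theta}\ \le\ \Big(\int_A|\Delta u|^{\frac{r+1}{r}}dx\Big)^{\theta}|\Omega|^{1-\theta},
\]
where the exponents reduce neatly via $\frac{p+1}{p\theta}=\frac{r+1}{r}$ and $1-\tfrac1\theta=\frac{r-p}{r(p+1)}$. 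Writing $\|u\|^{(p+1)/p}=\int_A|\Delta u|^{(p+1)/p}dx+\int_{\Omega\setminus A}|\Delta u|^{(p+1)/p}dx$ and feeding the dominant part of this decomposition into the estimate — the $A$–part through the displayed Hölder inequality, the $(\Omega\setminus A)$–part through the trivial bound $C_p\int_{\Omega\setminus A}|\Delta u|^{(p+1)/p}dx$ — I would distinguish the cases $\|u\|\le T$ and $\|u\|>T$, with $T:=(C_p/C_r)^{pr/(p-r)}|\Omega|^{p/(p+1)}$ the value at which the regime changes. After the (somewhat delicate) bookkeeping of constants — precisely where the coefficient $(2|\Omega|)^{(r-p)/(r(p+1))}$ and the exact value of $T$ emerge — one is left with the first summand of $h(\|u\|)$ in \eqref{hh}.

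For the two perturbation terms the estimates are routine consequences of the Sobolev embeddings encoded in $\Lambda_s$ and $S=\Lambda_q$: since $g\in C^1(\Omega)$ is bounded, $\frac{\mu}{s+1}\int_\Omega g|u|^{s+1}dx\le\frac{\mu|g|_\infty}{s+1}|u|_{s+1}^{s+1}\le\frac{\mu C_{g,s}}{s+1}\|u\|^{s+1}$, and likewise $\frac{1}{q+1}|u|_{q+1}^{q+1}\le\frac{S^{-1}}{q+1}\|u\|^{q+1}$. Subtracting these from the lower bound for $\int_\Omega F(x,\Delta u)\,dx$ established above yields $I_F(u)\ge h(\|u\|)$ with $h$ as in \eqref{hh}.

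The pointwise estimate on $F$ (via $(f_3)$ and $(f_4)$, with the oddness of $f$) and the two embedding estimates are entirely standard; the genuinely delicate point — and the one where I expect to spend the effort — is the reduction step: turning a bound that naturally lives in two distinct Lebesgue norms, on complementary subsets of $\Omega$, into a single bound in the norm $\|u\|=|\Delta u|_{(p+1)/p}$, carrying out the case analysis in $\|u\|$ versus $T$, and matching the constants so that the threshold $T$ and the factor $(2|\Omega|)^{(r-p)/(r(p+1))}$ come out exactly as stated.
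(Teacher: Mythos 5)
Your proposal follows essentially the same route as the paper: pointwise lower bounds on $F$ from $(f_3)$--$(f_4)$, the split of $\Omega$ into $A=\{|\Delta u|\le t_0\}$ and its complement, H\"older's inequality with exponent $\alpha=\tfrac{p(r+1)}{r(p+1)}=1/\theta$, the Sobolev embeddings for the lower-order terms, and the case analysis at the threshold $T$. The "delicate bookkeeping" you flag is exactly two concrete steps in the paper: first, when $\Vert u\Vert\le T$ one has $C_p X\ge |\Omega|^{1-\alpha}C_r X^{\alpha}$ for $X:=\int_{\Omega\setminus A}|\Delta u|^{\frac{p+1}{p}}dx\le \Vert u\Vert^{\frac{p+1}{p}}$ (this comparison is precisely what defines $T$), so both integrals can be put in the same $|\Omega|^{1-\alpha}C_r(\cdot)^{\alpha}$ form; second, the elementary convexity inequality $a^{\alpha}+b^{\alpha}\ge 2^{1-\alpha}(a+b)^{\alpha}$ applied with $a+b=\Vert u\Vert^{\frac{p+1}{p}}$ produces the factor $(2|\Omega|)^{\frac{r-p}{r(p+1)}}$, while the reverse comparison $C_r|\Omega|^{1-\alpha}X^\alpha\ge C_pX$ for $\Vert u\Vert>T$ yields the second branch of $h$.
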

	
	\begin{proof}
		When $\|u\|=0$, the result is trivial. Given $u \in E_p\backslash\{0\}$, we set
		\begin{equation}\label{omegau}
			\omega_u:=\{x \in \Omega,\,\,|\Delta u(x)|\leq t_0\}.   
		\end{equation}
		Using $(f_3), ~(f_4)$, applying H\"{o}lder's inequality with 
		$$\frac{1}{\alpha}+\frac{\alpha-1}{\alpha}=1, \quad \alpha:=\frac{p(r+1)}{r(p+1)}>1,$$
		and exploiting the embedding $E_p\hookrightarrow L^{s+1}$ and $E_p\hookrightarrow L^{q+1}$, we obtain
		\begin{eqnarray*}
			I_F(u)  &= &  \int_{\omega_u}    F(x,\Delta u)dx+\displaystyle \int_{\Omega\backslash \omega_u} F(x,\Delta u)dx-\frac{\mu}{s+1} \int_{\Omega}g|u|^{s+1}dx -\frac{1}{q+1} |u|_{q+1}^{q+1}\vspace{5pt} \\
			&\geq &  C_r \int_{\omega_u}   |\Delta u|^\frac{r+1}{r} dx+ C_p \displaystyle \int_{\Omega\backslash \omega_u}   |\Delta u|^\frac{p+1}{p}dx 
			-\frac{C_{g,s}\mu}{s+1} \|u\|^{s+1} -\frac{S^{-1}}{q+1}\|u\|^{q+1}\\
			& \geq & \displaystyle |\omega_u|^{1-\alpha}C_r \left( \int_{\omega_u} |\Delta u|^\frac{p+1}{p} dx\right)^\alpha +  \displaystyle  C_p \int_{\Omega\backslash \omega_u} |\Delta u|^\frac{p+1}{p}dx \displaystyle -\frac{C_{g,s}\mu}{s+1} \|u\|^{s+1} \\
			&&-\frac{S^{-1}}{q+1}\|u\|^{q+1} \\
			&\geq & \displaystyle |\Omega|^{1-\alpha}C_r \left( \int_{\omega_u} |\Delta u|^\frac{p+1}{p} dx\right)^\alpha +  \displaystyle  C_p \int_{\Omega\backslash \omega_u} |\Delta u|^\frac{p+1}{p}dx \displaystyle -\frac{C_{g,s}\mu}{s+1} \|u\|^{s+1}\\
			&&-\frac{S^{-1}}{q+1}\|u\|^{q+1}.
		\end{eqnarray*}
		
		Now, we divide the analysis into two cases according to \eqref{hh}. 
		
\vspace{.8mm}		
		
\noindent $\bullet$ {\it Case 1.} If
		$$\Vert u\Vert  \leq \left(\frac{C_p}{C_r}\right)^\frac{pr}{p-r}|\Omega|^{\frac{p}{p+1}},$$
		we have
		\begin{eqnarray*}
			I_F(u)&\geq & |\Omega|^{1-\alpha}C_r\left[ \left( \int_{\omega_u} |\Delta u|^\frac{p+1}{p} dx\right)^\alpha + \left( \int_{\Omega\backslash \omega_u} |\Delta u|^\frac{p+1}{p} dx\right)^\alpha\right]\\
			&&-\frac{C_{g,s}\mu}{s+1} \|u\|^{s+1} -\frac{S^{-1}}{q+1}\|u\|^{q+1}\\
			&\geq &2^{1-\alpha}|\Omega|^{1-\alpha}C_r\|u\|^{\frac{r+1}{r}}-\frac{C_{g,s}\mu}{s+1} \|u\|^{s+1} -\frac{S^{-1}}{q+1}\|u\|^{q+1}\\
			& = & h(\|u\|).
		\end{eqnarray*}
		
\vspace{.8mm}		
		
\noindent $\bullet$ {\it Case 2.} Assuming that
		$$
		\displaystyle \Vert u\Vert  > \left(\frac{C_p}{C_r}\right)^\frac{pr}{p-r}|\Omega|^{\frac{p}{p+1}},
		$$
		then
		\begin{equation*}
			I_F(u)\geq C_p  \Vert u \Vert^{\frac{p+1}{p}} \displaystyle -\frac{C_{g,s}\mu}{s+1} \|u\|^{s+1} -\frac{S^{-1}}{q+1}\|u\|^{q+1}=h(\|u\|).
		\end{equation*}
		The proof is complete.
	\end{proof}


	\section{The $(PS)_c$ condition}\label{ps-sec}
	The main challenge in dealing with critical problems like \eqref{prob} lies in the lack of compactness arguments. Here, to overcome this difficulty, we will localize the energy levels where the functional $I_F$ satisfies the so-called Palais-Smale condition.
	
	\begin{definition}\label{PSc}
		A sequence $(u_n) \subset E_p$ is a Palais-Smale sequence for $I_F$ at level $c$ if
		\begin{itemize}
			\item $I_F(u_n) \to c$,
			\item $I_F'(u_n) \to 0 \in E_p^*$.
		\end{itemize}
		If every Palais-Smale sequence for $I_F$ at level $c$ has a convergent subsequence in $E_p$, we say that $I_F$ satisfies the Palais-Smale condition at level $c$ or, for short, the $(PS)_c$ condition. 
	\end{definition}
	
	The main result of this section reads as follows.
	
	\begin{prop}\label{propcompacidade}
		Under assumptions ($f_1$)-($f_4$), the functional $I_F$ given by \eqref{functional} satisfies the $(PS)_c$ condition with
		\begin{equation*}
			c <\begin{cases}
				\mathscr{C}_q S^{\frac{pN}{2(p+1)}} - k\mu^{\frac{q+1}{q - s}}, &\text{if } (r,s) \text{ satisfies }\eqref{rssublinear},\\
				\frac{2}{N} S^{\frac{pN}{2(p+1)}} &\text{if } (r,s) \text{ satisfies }\eqref{rssuperlinear},
			\end{cases} 
		\end{equation*}
		where $\mathscr{C}_{\theta} = c_q - \frac{1}{\theta+1}$, $\theta\in \mathbb{R}$, and
		\begin{equation}\label{kk}
			k:=|\Omega|(q-s)\left(\frac{|g|_\infty \mathscr{C}_s}{s+1} \right)^{\frac{q+1}{q-s}}\left(\frac{q+1}{\mathscr{C}_q}\right)^{\frac{s+1}{q-s}}>0. 
		\end{equation}
	\end{prop}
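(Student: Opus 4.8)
The plan is to follow the classical Brezis–Nirenberg scheme adapted to the fourth-order Leray–Lions setting, in two stages: first establish boundedness of an arbitrary Palais–Smale sequence $(u_n)$ at level $c$, and then upgrade weak convergence to strong convergence by ruling out concentration below the stated threshold. For boundedness, I would combine $I_F(u_n)\to c$ with $I_F'(u_n)u_n = o(\|u_n\|)$, using assumption $(f_4)$ — which plays the role of a reverse Ambrosetti–Rabinowitz inequality — to control $\int_\Omega F(x,\Delta u_n)\,dx$ from below by a multiple of $\int_\Omega f(x,\Delta u_n)\Delta u_n\,dx$. In the sublinear case $(f_4)$ gives $F \ge c_q f(x,t)t$ with $c_q > \tfrac{1}{q+1}$, so a linear combination $I_F(u_n) - \tfrac{1}{q+1} I_F'(u_n)u_n$ produces a coercive-type estimate after absorbing the sublinear $\mu g|u|^{s+1}$ term via Young's inequality (this is exactly where the constant $k\mu^{(q+1)/(q-s)}$ and the exponent in \eqref{kk} come from); in the superlinear case one uses $F \ge \tfrac{1}{s+1} f(x,t)t$ together with $s+1 \le q+1$ to get the cleaner bound. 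Lemma \ref{prop-fun-h}, or rather the growth of $f$ encoded in $(f_1)$–$(f_3)$, then turns the resulting $L^{(p+1)/p}$ control of $\Delta u_n$ into a bound on $\|u_n\|$.

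Once $(u_n)$ is bounded, pass to a subsequence with $u_n \rightharpoonup u$ in $E_p$, $u_n \to u$ in $L^{\theta+1}$ for all subcritical $\theta$, and $u_n \to u$ a.e.; the limit $u$ is a weak solution, hence a critical point of $I_F$. Writing $v_n := u_n - u$, I would apply a Brezis–Lieb type splitting to the critical term $|u|_{q+1}^{q+1}$ and, using $(f_2)$ (which says $f(x,t)$ behaves asymptotically like the pure power $t^{1/p}$), an analogous splitting for $\int_\Omega F(x,\Delta u_n)\,dx$ and $\int_\Omega f(x,\Delta u_n)\Delta u_n\,dx$ into the contribution of $u$ plus the contribution of $v_n$ plus a vanishing error. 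Combining these, one finds that $v_n$ carries a decoupled "critical" energy: $\int_\Omega F(x,\Delta v_n)\,dx - \tfrac{1}{q+1}|v_n|_{q+1}^{q+1} \to c - I_F(u)$ while the corresponding derivative relation forces $\|v_n\|^{(p+1)/p} - |v_n|_{q+1}^{q+1} \to 0$ asymptotically (using $(f_2)$ to replace $F$ and $f(x,\cdot)\cdot$ by their pure-power counterparts in the limit). Setting $\ell := \lim |v_n|_{q+1}^{q+1}$ and using the Sobolev inequality $|v_n|_{q+1} \le S^{-1}\|v_n\|$ with $S = \Lambda_q$, one gets $\ell \le S^{-1}(\|v_n\|^{(p+1)/p})^{\text{(appropriate power)}}$, which after bookkeeping with the critical hyperbola \eqref{pqHC} yields either $\ell = 0$ or $\ell \ge S^{pN/2}$-type lower bound. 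The former gives $v_n \to 0$ strongly, i.e. the $(PS)_c$ condition; the latter is excluded because it would force $c - I_F(u) \ge \tfrac{2}{N}S^{pN/(2(p+1))}$ (using $(f_4)$ once more to relate the energy of $v_n$ to $\ell$), and since $I_F(u) \ge h(\|u\|)$ is bounded below — in the sublinear case by a quantity controlled by $-k\mu^{(q+1)/(q-s)}$ via minimizing $h$, in the superlinear case $I_F(u)\ge 0$ on the relevant set — this contradicts the assumed upper bound on $c$.

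The main obstacle I anticipate is the bookkeeping of exponents: because the operator is $\Delta(f(x,\Delta u))$ rather than a pure $p$-biharmonic operator, the natural energy exponent is $(p+1)/p$ for $\|\Delta u\|$ but the "critical dimension" arithmetic involves the hyperbola relation $\tfrac{1}{p+1}+\tfrac{1}{q+1} = \tfrac{N-2}{N}$, so one must carefully track how $S^{pN/(2(p+1))}$ arises as the critical energy level and verify the precise constants $\mathscr{C}_q = c_q - \tfrac{1}{q+1}$ and $k$ in \eqref{kk}. A secondary delicate point is justifying the Brezis–Lieb splitting for the nonlinear quantities $F(x,\Delta u_n)$ and $f(x,\Delta u_n)\Delta u_n$: the pointwise asymptotics $(f_2)$ gives control at infinity and $(f_1)$, $(f_3)$ near zero, but one needs a uniform integrability / Vitali argument (or an explicit decomposition $f = t^{1/p} + (f - t^{1/p})$ with the remainder of lower-order growth) to push the splitting through, since $f$ is only assumed continuous, odd and non-decreasing rather than smooth. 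The estimate absorbing the subcritical perturbation in the sublinear regime — optimizing $\mu g|u|^{s+1}$ against the other terms to extract precisely $k\mu^{(q+1)/(q-s)}$ — is the other computational heart of the argument, but it is a routine Young-inequality optimization once the structure is set up.
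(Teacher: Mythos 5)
Your outline follows the classical Brezis--Nirenberg global splitting scheme, whereas the paper runs the Lions concentration-compactness machinery: it shows that $|\Delta u_n|^{(p+1)/p}$ and $|u_n|^{q+1}$ converge (as measures) to limits that differ from the natural ones only by finitely many Dirac masses (Lemma \ref{convseqlim}), proves that any such atom forces $c$ above the stated threshold (Corollary \ref{empty}), and only then extracts strong convergence. These are genuinely different routes, and in principle the global approach could work, but there is a concrete gap in your plan that the concentration-compactness route is precisely designed to fill.

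The gap is in the step where you propose a ``Brezis--Lieb type splitting'' for $\int_\Omega F(x,\Delta u_n)\,dx$ and $\int_\Omega f(x,\Delta u_n)\Delta u_n\,dx$. Any such splitting requires $\Delta u_n\to\Delta u$ \emph{almost everywhere}. But boundedness of $(u_n)$ in $E_p$ only yields $\Delta u_n\rightharpoonup\Delta u$ weakly in $L^{(p+1)/p}$; the a.e.\ convergence you invoke is for $u_n$ (and $\nabla u_n$), which controls nothing about the second derivatives. You identify the Brezis--Lieb step as the ``delicate point'' but attribute the difficulty to uniform integrability, which is not the issue — equi-integrability cannot substitute for missing pointwise convergence of $\Delta u_n$. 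In the paper this a.e.\ convergence is a separate nontrivial lemma: one first shows, via the monotonicity of $f(x,\cdot)$ and a test-function argument localized on compacts $K\subset\subset\Omega\setminus\{x_j\}$, that $\int_K\bigl(f(x,\Delta u_n)-f(x,\Delta u)\bigr)(\Delta u_n-\Delta u)\,dx\to 0$ (Lemma \ref{ra}), and then applies a Boccardo--Murat type result to deduce $\Delta u_n\to\Delta u$ a.e.\ on $K$; this only covers all of $\Omega$ because the concentration set $\{x_j:j\in J\}$ is finite, which itself is a consequence of Lemma \ref{convseqlim}(vii). Your outline bypasses concentration-compactness entirely and therefore has no access to this mechanism. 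Without it, the passage from weak convergence of $(u_n)$ to the decoupled identity ``$\|v_n\|^{(p+1)/p}-|v_n|_{q+1}^{q+1}\to 0$'' cannot be justified. A secondary concern is that your dichotomy threshold naturally produces $\frac{2}{N}S^{pN/(2(p+1))}$ in both regimes, whereas in the sublinear case the stated level involves $\mathscr{C}_q=c_q-\tfrac1{q+1}$ and the Young-optimization constant $k$, which the paper obtains from the precise atomic decomposition $\nu=|u|^{q+1}+\sum_j\nu_j\delta_{x_j}$ combined with $(f_4)$; reproducing those exact constants via a global Brezis--Lieb argument would require additional care that your sketch does not indicate.
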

	
	The proof will be divided into several technical lemmas. Throughout the proofs in this section, we consider an arbitrary Palais-Smale sequence $(u_n)$ for $I_F$ at an arbitrary level $c$. We also maintain the notation $(u_n)$ for every subsequence of the original sequence.
	
	\begin{lemma}\label{psltd}
		$(u_n)$ is bounded in $E_p$.
	\end{lemma}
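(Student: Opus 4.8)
\textbf{Proof plan for Lemma \ref{psltd}.}

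The plan is to use the standard device of testing the equation against the Palais--Smale sequence itself and combining this with the energy bound, exploiting assumption $(f_4)$ — which plays here exactly the role of the Ambrosetti--Rabinowitz condition. First I would record that, since $(u_n)$ is a Palais--Smale sequence at level $c$, there is a constant $M>0$ with $I_F(u_n)\le M$ for all $n$, and $|I_F'(u_n)\phi|\le \varepsilon_n\|\phi\|$ for every $\phi\in E_p$, where $\varepsilon_n\to 0$; in particular $|I_F'(u_n)u_n|\le \varepsilon_n\|u_n\|$. Writing out $I_F(u_n)$ and $I_F'(u_n)u_n$ explicitly gives, for a constant $\vartheta$ to be chosen ($\vartheta=c_q$ in the sublinear case and $\vartheta=\frac{1}{s+1}$ in the superlinear case, matching the two branches of $(f_4)$),
\begin{equation*}
	I_F(u_n)-\vartheta\, I_F'(u_n)u_n = \int_\Omega\bigl(F(x,\Delta u_n)-\vartheta f(x,\Delta u_n)\Delta u_n\bigr)\,dx + \mu\Bigl(\vartheta-\tfrac{1}{s+1}\Bigr)\int_\Omega g|u_n|^{s+1}\,dx + \Bigl(\vartheta-\tfrac{1}{q+1}\Bigr)|u_n|_{q+1}^{q+1}.
\end{equation*}
By $(f_4)$ the first integrand is nonnegative, and the constants $\vartheta-\frac{1}{s+1}$ and $\vartheta-\frac{1}{q+1}$ are nonnegative in the relevant regime (here I use $c_q>\frac{1}{q+1}$ and, in the superlinear case, $\frac{1}{s+1}>\frac{1}{q+1}$ since $s<q$). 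Thus the right-hand side controls $|u_n|_{q+1}^{q+1}$ from above: $|u_n|_{q+1}^{q+1}\le C\bigl(M+\varepsilon_n\|u_n\|\bigr)$, and in the sublinear case one also needs to absorb the $|u_n|_{s+1}^{s+1}$ term, which is harmless because $s+1<q+1$ (interpolation / Young's inequality against $|u_n|_{q+1}^{q+1}$ with a small constant).

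Next I would convert this into a bound on $\|u_n\|$ by invoking Lemma \ref{prop-fun-h}, which gives $I_F(u_n)\ge h(\|u_n\|)$. If $(\|u_n\|)$ were unbounded, pass to a subsequence with $\|u_n\|\to\infty$; then we are eventually on the second branch of $h$, so
\begin{equation*}
	M\ge I_F(u_n)\ge C_p\|u_n\|^{\frac{p+1}{p}}-\frac{C_{g,s}\mu}{s+1}\|u_n\|^{s+1}-\frac{S^{-1}}{q+1}\|u_n\|^{q+1}.
\end{equation*}
This alone is not enough because the $\|u_n\|^{q+1}$ term has the wrong sign, which is precisely why the critical exponent makes boundedness nontrivial. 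The resolution is to feed back the bound $|u_n|_{q+1}^{q+1}\le C(M+\varepsilon_n\|u_n\|)$ obtained above: using $|u_n|_{q+1}^{q+1}\ge S^{q+1}\|u_n\|^{q+1}$ in reverse is the wrong direction, so instead I keep $|u_n|_{q+1}^{q+1}$ as the controlled quantity throughout and avoid bounding it below by $\|u_n\|^{q+1}$. Concretely, from $I_F(u_n)\le M$ and the definition of $I_F$,
\begin{equation*}
	\int_\Omega F(x,\Delta u_n)\,dx \le M + \frac{\mu}{s+1}|g|_\infty|u_n|_{s+1}^{s+1} + \frac{1}{q+1}|u_n|_{q+1}^{q+1} \le M' + C'\varepsilon_n\|u_n\|,
\end{equation*}
and then $(f_3)$ together with the same Hölder/interpolation argument used in the proof of Lemma \ref{prop-fun-h} gives $\int_\Omega F(x,\Delta u_n)\,dx\ge c\,\|u_n\|^{\frac{p+1}{p}}$ once $\|u_n\|$ is large. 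Combining, $c\,\|u_n\|^{\frac{p+1}{p}}\le M'+C'\varepsilon_n\|u_n\|$, and since $\frac{p+1}{p}>1$ this forces $(\|u_n\|)$ to be bounded, a contradiction.

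The main obstacle is the bookkeeping in the critical term: one must be careful to route the estimate so that $|u_n|_{q+1}^{q+1}$ is always the \emph{controlled} (upper-bounded) quantity and is never bounded from below by a power of $\|u_n\|$ during the boundedness argument — otherwise the signs conspire against us, which is the usual pitfall at the critical exponent. A secondary technical point is treating the sublinear perturbation: there $s+1<2<q+1$, so $\mu\|u_n\|^{s+1}$ is a lower-order term that is absorbed by Young's inequality into an arbitrarily small multiple of $\|u_n\|^{\frac{p+1}{p}}$ plus a constant (using $s+1<\frac{p+1}{p}$, which follows from $s<\frac1p$), and one should check the constants in $(f_4)$ — in particular $c_q>\frac{1}{q+1}$ — are used with the correct sign so that the perturbation and critical contributions drop out of $I_F(u_n)-\vartheta I_F'(u_n)u_n$ with a favorable sign.
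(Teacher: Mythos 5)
Your proposal is correct, but it follows a genuinely different path from the paper. Both start from the standard device $I_F(u_n)-\vartheta\,\langle I_F'(u_n),u_n\rangle$, using $(f_4)$ in the role of a (reverse) Ambrosetti--Rabinowitz condition, but the subsequent routing is different. The paper picks $\vartheta=\tfrac{1}{q+1}$ (sublinear) or $\vartheta=\tfrac{1}{s+1}$ (superlinear), so the critical term $|u_n|_{q+1}^{q+1}$ drops out of (or enters with a favourable sign into) the difference, and then it uses $(f_2)$ together with L'Hospital's rule to extract a quantitative lower bound $F(x,t)-\tfrac{1}{\theta_*+1}f(x,t)t\ge b_\epsilon|t|^{\frac{p+1}{p}}$ for $|t|$ large; this immediately yields $\varepsilon_n\|u_n\|+c\ge b_\epsilon\|u_n\|^{\frac{p+1}{p}}-\mu_*C\|u_n\|^{s+1}-C_0$ and hence boundedness. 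By contrast, you pick $\vartheta=c_q$ in the sublinear case, discard the whole $\int(F-\vartheta f t)$ term by $(f_4)$, first extract control on $|u_n|_{q+1}^{q+1}$, feed that back into $I_F(u_n)\le M$ to bound $\int_\Omega F(x,\Delta u_n)\,dx$ from above, and then invoke $(f_3)$ and the Hölder argument of Lemma~\ref{prop-fun-h} to bound the same quantity from below by $c\|u_n\|^{\frac{p+1}{p}}-C_0$. That closes the loop, but it is a two-step detour where the paper needs only one inequality chain, it imports the lower bound of Lemma~\ref{prop-fun-h} as an additional ingredient, and it trades $(f_2)$ for $(f_3)$ as the structural assumption that produces the coercive exponent $\tfrac{p+1}{p}$. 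One small inaccuracy worth flagging: your claim that \emph{both} $\vartheta-\tfrac{1}{s+1}$ and $\vartheta-\tfrac{1}{q+1}$ are nonnegative is false in the sublinear regime — since $s<\tfrac1p$ forces $\tfrac{1}{s+1}>\tfrac{p}{p+1}\ge c_q$, one has $c_q-\tfrac{1}{s+1}<0$ — but you catch this yourself a line later and absorb the wrong-sign term by Young's inequality against $|u_n|_{q+1}^{q+1}$, so the conclusion stands.
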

	\begin{proof}
		Let 
		$$
		\theta_*:=\begin{cases}
			q, &\text{if } (r,s) \text{ satisfies }\eqref{rssublinear},\\
			s, &\text{if } (r,s) \text{ satisfies }\eqref{rssuperlinear},
		\end{cases} 
		\quad \mbox{and} \quad   \mu_*:=\begin{cases}
			\frac{\mu(q-s)}{(s+1)(q+1)}, &\text{if } (r,s) \text{ satisfies }\eqref{rssublinear},\\
			0, &\text{if } (r,s) \text{ satisfies }\eqref{rssuperlinear}.
		\end{cases}  
		$$
		By definition, there exists a positive sequence $\varepsilon_n \rightarrow 0$ such that 
		\begin{eqnarray*}
			\varepsilon_n \Vert u_n\Vert + c &\geq & I_F(u_n)- \frac{1}{\theta_*+1} \langle I_F'(u_n),u_n\rangle\\
			&   \geq &\displaystyle \int_{\Omega}  \left(F(x,\Delta u_n)-\frac{1}{\theta_*+1} f(x,\Delta u_n) \Delta u_n \right) dx -\mu_*\int_{\Omega}g|u_n|^{s+1}dx.
		\end{eqnarray*} 
		
		Consider the function 
		$$H_p(x,t):=|t|^{-\frac{p+1}{p}}\left(F(x,t)-\frac{1}{\theta_*+1}f(x,t)t\right),\,\, x \in \Omega,\,\, t\in \mathbb{R}.$$
		From $(f_2)$ and by L'Hospital rule,
		\begin{equation*}
			\lim_{t\rightarrow \infty} H_p (x,t)=\lim_{t\rightarrow \infty}\left[ \frac{p}{p+1}\frac{f(x,t)}{t^{1/p}}-\frac{f(x,t)}{(\theta_*+1)t^{1/p}}\right]=\frac{p\theta_*-1}{(p+1)(\theta_*+1)}.
		\end{equation*}
		Consequently, for every $\epsilon>0$, there exists $t_0=t_0(\epsilon)>0$ such that
		$$H_p (x,t)> \underbrace{\frac{p\theta_*-1}{(p+1)(\theta_*+1)}-\epsilon}_{:=b_{\epsilon}} >0,\,\,\, x \in \Omega,\,\,\, |t|>t_0,$$
		which implies that
		\begin{equation}\label{Hp-ineq}
			F(x,t)-\frac{1}{\theta_*+1} f(x,t)t > b_{\epsilon}|t|^{\frac{p+1}{p}}, \,\,\, x \in \Omega,\,\,\, |t|>t_0.
		\end{equation}
		
		Therefore,
		\begin{eqnarray*}
			\varepsilon_n \Vert u_n\Vert + c &\geq & b_\epsilon \displaystyle \int_{\Omega\backslash \omega_u} |\Delta u_n|	^{\frac{p+1}{p}}dx -\mu_*C_{g,s}\Vert u_n\Vert^{s+1}\\
			&=& b_\epsilon \left(  \displaystyle \int_{\Omega} |\Delta u_n|^{\frac{p+1}{p}}dx - \displaystyle \int_{\omega_u} |\Delta u_n|^{\frac{p+1}{p}}dx\right)-\mu_*C_{g,s}\Vert u_n\Vert^{s+1}\\
			&\geq & b_\epsilon\Vert u_n\Vert ^{\frac{p+1}{p}}-\mu_*C_{g,s}\Vert u_n\Vert^{s+1}-b_\epsilon t_0^{\frac{p+1}{p}}|\Omega|.  
		\end{eqnarray*}
		Since $s<\frac{1}{p}$ when \eqref{rssublinear} holds, we conclude that $(u_n)$ is bounded in $E_p$.
	\end{proof}
	
	\subsection{A localizing result}
	In order to localize the levels where $I_F$ satisfies the $(PS)_c$ condition, we present a slight refinement of Lions Lemma \cite[Lemma I.1]{lions} to our case.
	
	\begin{lemma}\label{convseqlim}
		Let $(u_n)$ be a Palais-Smale sequence for $I_F$ at level $c$, up to a subsequence, we have:
		\begin{itemize}
			\item[\rm (i)] $u_n \rightharpoonup u$ in $E_p$.
			\item[\rm (ii)] $u_n \rightarrow u$ a.e. in $\Omega$ and in $L^{\theta}$, for all $1\leq \theta < q+1$.
			\item[\rm (iii)] $\left| \Delta u_n\right|^{\frac{p+1}{p}} \stackrel{*}{\rightharpoonup}\vartheta $ in the sense of measures on $\overline{\Omega}$.
			\item[\rm (iv)] $\left| u_n \right|^{q+1} \stackrel{*}{\rightharpoonup} \nu$ in the sense of measures on $\overline{\Omega}$.
			\item[\rm (v)] $\nabla u_n \rightharpoonup \nabla u$ in $\left( W^{1, \frac{p+1}{p}}(\Omega) \right)^N$.
			\item[\rm (vi)] $\nabla u_n \rightarrow \nabla u$ a.e. in $\Omega$ and in $\left( L^{\sigma} \right)^N$, for all $1 \leq \sigma < \sigma^*$, with $\sigma^* > \frac{p+1}{p}$ depending on the critical Sobolev embedding of $W^{1, \frac{p+1}{p}}(\Omega)$.
			\item[\rm (vii)] There exist an at most finite index set $J$, a family of points $\{ x_j: j \in J \} \subset \overline{\Omega}$ and a positive sequence $\{ \nu_j: j \in J \} ,\{ \vartheta_j: j \in J\}$ such that:
			\begin{itemize}
				\item[$a)$] \(
				\displaystyle \nu = \left| u \right|^{q+1} + \sum_{j \in J} \nu_j \delta_{x_j},\)
				\item[$b)$] \(\vartheta \geq \left| \Delta u \right|^{\frac{p+1}{p}} + \displaystyle\sum_{j\in J} \vartheta_j \delta_{x_j},
				\)
				\item[$c)$] $\nu_j \geq S^{\frac{pN}{2(p+1)}}$, for every $j \in J$.
			\end{itemize}
		\end{itemize}
	\end{lemma}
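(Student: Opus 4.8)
The plan is to establish items (i)--(vi) by routine compactness arguments and then derive the concentration-compactness decomposition (vii) following Lions' original scheme, adapted to the fourth-order setting. First I would invoke Lemma \ref{psltd} to get boundedness of $(u_n)$ in $E_p$; since $E_p$ is reflexive, up to a subsequence $u_n \rightharpoonup u$ in $E_p$, which is (i). The compact embeddings $E_p \hookrightarrow\hookrightarrow L^\theta(\Omega)$ for $\theta < q+1$ (a consequence of the Rellich--Kondrachov theorem applied to $W^{2,(p+1)/p}$, using that $q+1$ is the critical exponent corresponding to the critical hyperbola \eqref{pqHC}) give (ii). The sequences $|\Delta u_n|^{(p+1)/p}$ and $|u_n|^{q+1}$ are bounded in $L^1(\Omega)$, hence (passing again to subsequences) converge weakly-$*$ to finite positive measures $\vartheta$ and $\nu$ on $\overline\Omega$ by the Banach--Alaoglu theorem, which is (iii)--(iv). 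For (v)--(vi), I would note that $\|\nabla u_n\|_{W^{1,(p+1)/p}}$ is controlled by $\|u_n\|$ plus lower-order terms via elliptic regularity/Poincaré-type inequalities on $E_p$, so $\nabla u_n \rightharpoonup \nabla u$ in $(W^{1,(p+1)/p})^N$ and, by Rellich--Kondrachov on $W^{1,(p+1)/p}$, converges strongly in $(L^\sigma)^N$ and a.e. for $\sigma$ below the corresponding critical exponent $\sigma^*$.

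The heart of the proof is (vii). Here I would apply the second concentration-compactness lemma of Lions to the bounded sequence $(u_n)$ in $E_p$ viewed through the critical embedding $E_p \hookrightarrow L^{q+1}(\Omega)$. The standard argument gives the atomic decomposition $\nu = |u|^{q+1} + \sum_{j\in J}\nu_j\delta_{x_j}$ with at most countably many atoms, and $\vartheta \geq |\Delta u|^{(p+1)/p} + \sum_{j\in J}\vartheta_j\delta_{x_j}$, together with the reverse-Sobolev inequality $\nu_j^{1/(q+1)} \leq S^{-1}\vartheta_j^{p/(p+1)}$ relating the atoms (here $S = \Lambda_q$ is the best constant in $\|u\|\geq S|u|_{q+1}$). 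To upgrade this to $c)$, namely $\nu_j \geq S^{pN/(2(p+1))}$, and to conclude $J$ is finite, I would test the (approximate) Euler--Lagrange equation $\langle I_F'(u_n),\phi\rangle = o(1)$ with $\phi = u_n \psi_{j,\varepsilon}$, where $\psi_{j,\varepsilon}$ is a smooth cutoff supported near $x_j$ and equal to $1$ at $x_j$. Passing to the limit $n\to\infty$ then $\varepsilon\to 0$, the perturbation terms $\mu g|u_n|^{s-1}u_n\phi$ and, for the localization of $\vartheta$, the lower-order contributions vanish (using strong $L^\theta$ convergence from (ii) and (vi)), leaving $\vartheta(\{x_j\}) \geq \nu(\{x_j\}) = \nu_j$ in the limit via the weak formulation. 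Combining $\vartheta_j \geq \nu_j$ with the reverse-Sobolev inequality $\nu_j \leq S^{-(q+1)}\vartheta_j^{p(q+1)/(p+1)}$ and the arithmetic identity $\frac{p(q+1)}{p+1} = \frac{pN}{2(p+1)}\cdot\frac{2(q+1)}{N}$ forces $\nu_j \geq S^{pN/(2(p+1))}$; finiteness of $J$ follows since $\nu(\overline\Omega) < \infty$ and each atom has mass bounded below.

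The main obstacle I anticipate is the fourth-order localization step: unlike the second-order case, testing with $u_n\psi_{j,\varepsilon}$ produces, after integrating $\int_\Omega f(x,\Delta u_n)\Delta(u_n\psi_{j,\varepsilon})\,dx$, cross terms of the form $\int f(x,\Delta u_n)(2\nabla u_n\cdot\nabla\psi_{j,\varepsilon} + u_n\Delta\psi_{j,\varepsilon})\,dx$ that are not present for $\Delta^2$. Controlling these requires the a.e. and strong $L^\sigma$ convergence of $\nabla u_n$ from (vi) together with $(f_1)$--$(f_2)$ to bound $f(x,\Delta u_n)$ in $L^{p+1}(\Omega)$, so that these cross terms are $O(\varepsilon^{\text{something positive}})$ as $\varepsilon\to 0$ uniformly in $n$ — this is precisely why items (v)--(vi) are recorded separately in the statement. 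I would handle this by a careful Hölder estimate: $\big|\int f(x,\Delta u_n)\nabla u_n\cdot\nabla\psi_{j,\varepsilon}\,dx\big| \le |f(x,\Delta u_n)|_{p+1}\,|\nabla u_n|_{L^\sigma(B_\varepsilon)}\,|\nabla\psi_{j,\varepsilon}|_\infty|B_\varepsilon|^{1/\tau}$ with exponents chosen so the volume factor beats the blow-up $|\nabla\psi_{j,\varepsilon}|_\infty \sim \varepsilon^{-1}$, which is possible exactly because $\sigma$ can be taken strictly above $(p+1)/p$. Once these error terms are dispatched, the remainder of the argument is the classical Lions machinery.
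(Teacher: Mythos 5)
Your overall architecture matches the paper (boundedness $\Rightarrow$ Lions-type decomposition from \cite[Lemma 3.3]{ederson}, then a cutoff test of $\langle I_F'(u_n),\zeta_\beta u_n\rangle\to 0$ with the cross terms controlled via H\"older using items (v)--(vi)), but you have the key inequality \emph{backwards}, and with your direction the final step does not close.

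You claim the cutoff argument yields $\vartheta(\{x_j\})\geq\nu_j$. What the paper proves — and what is actually needed — is the reverse: $\nu_j\geq\vartheta_j$. Indeed, after the cross terms and the $s$-perturbation vanish, one has roughly $\nu_j=\lim_{\beta\to 0}\lim_n\int f(x,\Delta u_n)\Delta u_n\,\zeta_\beta\,dx$, and to compare this with $\vartheta_j$ one must use the \emph{lower} bound $f(x,t)t\geq\frac{1}{1+\varepsilon}|t|^{(p+1)/p}$ for $|t|$ large (hypothesis $(f_2)$), not the upper bound $(f_1)$; this gives $\nu_j\geq\frac{1}{1+\varepsilon}\vartheta_j$ for every $\varepsilon>0$ and hence $\nu_j\geq\vartheta_j$. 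Your $(f_1)$-based estimate only yields $\nu_j\leq\vartheta(\{x_j\})$, which is the useless direction. The sign matters decisively at the end: the reverse Sobolev inequality from the Lions lemma reads $\vartheta_j\geq S\,\nu_j^{\frac{p+1}{p(q+1)}}$, so with $\nu_j\geq\vartheta_j$ one chains $\nu_j\geq S\,\nu_j^{\frac{p+1}{p(q+1)}}$ and solves to get $\nu_j\geq S^{\frac{p(q+1)}{pq-1}}=S^{\frac{pN}{2(p+1)}}$ (the identity $\frac{p(q+1)}{pq-1}=\frac{pN}{2(p+1)}$ follows from the critical hyperbola \eqref{pqHC}). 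With your $\vartheta_j\geq\nu_j$, both inequalities only bound $\vartheta_j$ from below in terms of $\nu_j$ and yield no lower bound on $\nu_j$ at all — so the conclusion $\nu_j\geq S^{pN/(2(p+1))}$ and the finiteness of $J$ do not follow. Also note that the leading term $\int f(x,\Delta u_n)\Delta u_n\,\zeta_\beta\,dx$ is not automatically comparable to $\vartheta$ (which is the weak-$*$ limit of $|\Delta u_n|^{(p+1)/p}$, not of $f(x,\Delta u_n)\Delta u_n$); the paper's explicit splitting into the sets $A_n(M)$ and $B_n(M)$ and invoking $(f_2)$ to control the difference on the bad set is the step that closes this gap, and it deserves to be spelled out rather than absorbed into ``classical Lions machinery.''
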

	\begin{proof}
		By Lemma \ref{psltd} and \cite[Lemma 3.3]{ederson}, conditions (i)-(vi) hold and there exist an at most countable set $J$,  a family of points $\{ x_j: j \in J \} \subset \overline{\Omega}$ and two positive sequences $\{ \nu_j: j \in J \} ,\{ \vartheta_j: j \in J\}$ such that
		\begin{itemize}
			\item[$a')$] \(
			\displaystyle \nu = \left| u \right|^{q+1} + \sum_{j \in J} \nu_j \delta_{x_j},\)
			\item[$b')$] \(\vartheta \geq \left| \Delta u \right|^{\frac{p+1}{p}} + \displaystyle\sum_{j\in J} \vartheta_j \delta_{x_j},
			\)
			\item[$c')$] \(
			\displaystyle {S \, \nu_j^{\frac{p+1}{p}\frac{1}{q+1}} \leq \vartheta_j \,\, \hbox{for all}\,\, j \in J. \,\, \hbox{In particular}\,\,\sum_{j \in J} \nu_j^{\frac{p+1}{p}\frac{1}{q+1}} < + \infty}.
			\)
		\end{itemize}
		Then, to show (vii), it is sufficient to verify that 
		\begin{equation}\label{nj}
			\nu_j \geq \vartheta_j, \quad \forall\,\, j \in J.   
		\end{equation} 
		Indeed, if \eqref{nj} holds, using $c')$ and noting that 
		$$\frac{pN}{2(p+1)}=\left(1-\frac{1}{q+1}\frac{p+1}{p}\right)^{-1},$$
		we deduce
		$$
		\nu_j=0 \text{ or }\nu_j \geq S^{\frac{pN}{2(p+1)}},
		$$
		which shows $c)$. Consequently, the convergence 
		$$\sum_{j\in J} \nu_j^{\frac{p+1}{p}\frac{1}{q+1}}<+\infty,$$
		implies that $J$ is at most finite. Then, $a)$ and $b)$ are proved.\\

		\noindent\textit{Proof of \eqref{nj}:} Let $\zeta \in C^{\infty}_{c}(\mathbb{R}^{N} )$ be a cut-off function such that
		$$0\leq \zeta \leq 1, \quad \zeta\equiv1 \,\,\mbox{in}\,\, B(0,1), \quad \operatorname{supp}(\zeta) \subset B(0,2).$$ 
		For each $\beta >0$, we set 
		\begin{equation}\label{beta}
			\zeta_\beta(x) := \zeta\left(\frac{x-x_j}{\beta}\right). 
		\end{equation}
		By definition, there exists $C>0$, independent of  $\beta$, such that
		$$ |\nabla \zeta_\beta (x)| \leq \frac{C}{\beta}, \  |\Delta \zeta_\beta (x)| \leq \frac{C}{\beta^2}, \ \forall\, x \in \mathbb{R}^N.$$
		By  \cite[Lemma 3.4]{ederson}, $\zeta_\beta u_n  \in E_p$, for all $n \in \mathbb{N}$ and $\beta>0$. Then,
		\begin{equation}\label{notas 1}
			\,\langle I_F'(u_n),\zeta_\beta u_n\rangle =\sum_{l=1}^{5}a_{l,n,\beta},
		\end{equation}
		where
		\begin{eqnarray}{l}
			\displaystyle a_{1,n,\beta}:= \displaystyle \int_{\overline{\Omega}} f(x,\Delta u_n) u_n \Delta \zeta_\beta dx,\quad
			a_{2,n,\beta}:=- \mu \int_{\overline{\Omega}} g|u_n|^{s+1}\zeta_\beta dx, \quad
			a_{3,n,\beta}:= -  \int_{\overline{\Omega}} |u_n|^{q+1} \zeta_\beta dx,\\[6pt] 
			\displaystyle a_{4,n,\beta}:= 2 \displaystyle \int_{\overline{\Omega}} f(x,\Delta u_n) \nabla u_n \nabla \zeta_\beta dx,\quad
			a_{5,n,\beta}:=\int_{\overline{\Omega}} f(x,\Delta u_n)\Delta u_n \zeta_\beta dx.  
		\end{eqnarray}
		Next, we analyze separately the limits of each double sequence $a_{l,n,\beta}$, $l=1,\hdots,5$.
		
		\vspace{1.5mm}
		
		\noindent {\bf{Limit of $a_{1,n,\beta}$}:} Recalling assumption $(f_1)$ and applying Hölder inequality with
		$$\frac{1}{p+1}+ \frac{1}{q+1} + \frac{2}{N}=1,$$
		we have
		$$
		\displaystyle \left| a_{1,n,\beta} \right| \leq  \int_{\overline{\Omega}} |\Delta u_n|^{1/p} |u_n| |\Delta \zeta_\beta| dx \leq C \!\left( \!\displaystyle \int_{\overline{\Omega}} |u_n|^{q+1} \left|\Delta \zeta\left(\frac{x-x_j}{\beta}\right)\right|^{\frac{q+1}{2}} \!dx\right)^{\frac{1}{q+1}},$$
		for some $C>0$, independent of $n$ and $\beta$.
		Using (iv) and applying the Dominated Convergence Theorem, we infer
		$$
		\displaystyle \int_{\overline{\Omega}} |u_n|^{q+1}  \left|\Delta \zeta\left( \frac{x-x_j}{\beta}\right)\right|^{\frac{q+1}{2}}  dx \xrightarrow{n \rightarrow \infty} \displaystyle \int_{\overline{\Omega}}   \left|\Delta\zeta\left( \frac{x-x_j}{\beta} \right)\right|^{\frac{q+1}{2}}  d\nu \xrightarrow{\beta \rightarrow 0} 0.
		$$
		Hence,
		\begin{equation}\label{limit1}
			\lim_{\beta\to 0}\lim_{n\to \infty}a_{1,n,\beta}=0. 
		\end{equation}
		
		\vspace{1.5mm}
		
		\noindent {\bf{Limit of $a_{2,n,\beta}$}:} Taking into account that $E_p\stackrel{c}{\hookrightarrow} L^{s+1}$,
		$$u_n \rightharpoonup u\,\, {\text{in}}\,\, E_p \qquad\text{and}\qquad \zeta_{\beta} (x) \xrightarrow{\beta \rightarrow 0} \delta_{x_j} (x), \quad \forall \,\,x \in \Omega,$$
		we get
		$$ 
		\displaystyle \int_{\overline{\Omega}} g|u_n|^{s+1}\zeta_\beta dx \xrightarrow{n \rightarrow \infty}  \displaystyle \int_{\overline{\Omega}} g|u|^{s+1} \zeta_\beta dx \xrightarrow{\beta \rightarrow 0} 0.
		$$
		Then,
		\begin{equation}\label{limit2}
			\lim_{\beta\to 0}\lim_{n\to\infty}a_{2,n,\beta}=0. 
		\end{equation}
		
		\vspace{1.5mm}
		
		\noindent {\bf{Limit of $a_{3,n,\beta}$}:} Using $a')$, we get
		\begin{equation}\label{limit3}
			\lim_{\beta\to 0}\lim_{n\to\infty}a_{3,n,\beta}= -\nu_j.
		\end{equation}
		
		\vspace{1.5mm}
		
		\noindent {\bf{Limit of $a_{4,n,\beta}$}:} Note that
		$$
		\left| \displaystyle \int_{\overline{\Omega}} f(x,\Delta u_n) \nabla u_n \nabla \zeta_\beta dx \right| \leq C \left(\displaystyle \int_{\overline{\Omega}}  \left(\frac{1}{\beta} \left| \nabla\zeta\left(\frac{x-x_j}{\beta}\right)\right| |\nabla u_n|\right)^{\frac{p+1}{p}}dx\right)^{\frac{p}{p+1}}.$$
		Since $p$ lies on the critical hyperbola, it follows that $\frac{p}{p+1} < \frac{N}{2}$. Consequently,
		$$
		\displaystyle \int_{\overline{\Omega}} \!\left(\frac{1}{\beta} \left| \nabla\zeta\left(\frac{x-x_j}{\beta}\right)\right| |\nabla u_n|\right)^{\frac{p+1}{p}}\!\!dx \xrightarrow{n \rightarrow \infty} \displaystyle \int_{\overline{\Omega}} \!\left(\frac{1}{\beta} \left| \nabla\zeta\left(\frac{x-x_j}{\beta}\right)\right| |\nabla u|\right)^{\frac{p+1}{p}}\!\!dx\xrightarrow{\beta \rightarrow 0} 0.$$
		Hence,
		\begin{equation}\label{limit4}
			\lim_{\beta\to 0}\lim_{n\to\infty}a_{4,n,\beta}= 0.
		\end{equation}
		
		\vspace{1.5mm}
		
		\noindent {\bf{Limit of $a_{5,n,\beta}$}:} Let $\varepsilon>0$. By condition ($f_{2}$), there exists $M=M(\varepsilon)>0$ such that 
		$$f(x,t)t\geq \frac{1}{1+\varepsilon}|t|^\frac{p+1}{p}, \quad\forall\,\,|t|\geq M.$$ 
		For $n>0$, we define
		\begin{equation}\label{An}
			A_n(M):=\{x\in B(x_j,2\beta)\cap \overline{\Omega}; |\Delta u_n(x)|\geq M\}, \quad B_n(M):= (\overline{\Omega}\cap B(x_j,2\beta))\backslash A_n(M). 
		\end{equation}
		Then,
		\begin{eqnarray*}
			\int_{\overline{\Omega}} f(x,\Delta u_n)\Delta u_n \zeta_\beta dx
			&\!=\!&\displaystyle \int_{A_n(M)} f(x,\Delta u_n)\Delta u_n \zeta_\beta dx+\displaystyle \int_{B_n(M)}f(x,\Delta u_n)\Delta u_n \zeta_\beta dx \\
			&\!\geq\! &\!\frac{1}{1+\varepsilon} \displaystyle \int_{\overline{\Omega}} |\Delta u_n|^{\frac{p+1}{p}} \zeta_\beta dx \\
			&&+ \displaystyle \int_{B_n(M)}\!\! \left(f(x,\Delta u_n)\Delta u_n- \frac{1}{1+\varepsilon} |\Delta u_n|^{\frac{p+1}{p}}\right)\!  \zeta_\beta dx.
		\end{eqnarray*}
		Since
		\begin{multline*}
			\lim_{\beta\rightarrow 0} \limsup_{n\rightarrow \infty} \left|\displaystyle \int_{B_n(M)} \left(f(x,\Delta u_n)\Delta u_n- \frac{1}{1+\varepsilon} |\Delta u_n|^{\frac{p+1}{p}}\right) \zeta_\beta dx \right|\\
			\leq \lim_{\beta\rightarrow 0} \limsup_{n\rightarrow \infty} \displaystyle \int_{B_n(M)} \left(f(x,M)M+\frac{1}{1+\varepsilon} M^{\frac{p+1}{p}}\right) \zeta_\beta dx  = 0,
		\end{multline*}
		we arrive at
		\begin{equation}\label{limit5}
			\lim_{\beta\to 0}\limsup_{n\rightarrow \infty}a_{5,n,\beta}\geq \frac{1}{1+\varepsilon}\vartheta_j. 
		\end{equation}
		
		In view of convergences \eqref{limit1}-\eqref{limit5}, we can pass the limit in \eqref{notas 1} to obtain
		$$
		\nu_j \geq \frac{1}{1+\varepsilon}\vartheta_j, \quad \forall \,\, \varepsilon>0,
		$$
		which implies that $\nu_j \geq \vartheta_j$. The proof is now complete. 
	\end{proof}
	
	We have the following consequence in the superlinear case.
	
	\begin{Corollary}\label{ineqnuk}
		Assume that \eqref{rssuperlinear} holds. Then, up to a subsequence, we have
		\begin{equation*}
			\lim_{n\rightarrow \infty} \int_{\overline{\Omega}} \left[F(x,\Delta u_n)-\frac{1}{s+1} f(x,\Delta u_n) \Delta u_n \right] dx \geq \frac{ps-1}{(p+1)(s+1)} \vartheta_j, \quad j \in J,
		\end{equation*}
		where $s$ is as in the proof of Lemma \ref{psltd}.
	\end{Corollary}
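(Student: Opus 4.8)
The plan is to repeat, in quantitative form, the analysis of the term $a_{5,n,\beta}$ from the proof of Lemma \ref{convseqlim}, now estimating the full ``gap'' $F(x,\Delta u_n)-\frac{1}{s+1}f(x,\Delta u_n)\Delta u_n$ from below rather than just $f(x,\Delta u_n)\Delta u_n$. Throughout, $(u_n)$ and the subsequences are those furnished by Lemma \ref{convseqlim}, and $s$ plays the role of $\theta_\ast$ from the proof of Lemma \ref{psltd}. Two pointwise facts are needed. First, in the superlinear regime assumption $(f_4)$ states precisely that $F(x,t)-\frac{1}{s+1}f(x,t)t\ge 0$ for every $(x,t)\in\Omega\times\mathbb{R}$, so the integrand in the statement is nonnegative; this is what permits inserting a cut-off and restricting the integration domain without reversing inequalities. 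Second, the L'Hospital computation that produced \eqref{Hp-ineq}, carried out with $\theta_\ast=s$, gives for each small $\epsilon>0$ a number $t_0(\epsilon)>0$ with
\[
F(x,t)-\tfrac{1}{s+1}f(x,t)t\ \ge\ b_\epsilon\,|t|^{\frac{p+1}{p}},\qquad x\in\Omega,\ |t|>t_0(\epsilon),
\]
where $b_\epsilon:=\frac{ps-1}{(p+1)(s+1)}-\epsilon$, which is strictly positive once $\epsilon$ is small since $s\ge 1/r>1/p$ forces $ps>1$.

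Next, fix $j\in J$, take the cut-off $\zeta_\beta$ of \eqref{beta}, choose $M>t_0(\epsilon)$, and split $\overline{\Omega}\cap B(x_j,2\beta)$ into $A_n(M)$ and $B_n(M)$ as in \eqref{An}. Using the nonnegativity of the integrand, $0\le\zeta_\beta\le 1$, and the bound of the previous paragraph on $A_n(M)$,
\[
\int_{\overline{\Omega}}\Big[F(x,\Delta u_n)-\tfrac{1}{s+1}f(x,\Delta u_n)\Delta u_n\Big]dx\ \ge\ b_\epsilon\int_{A_n(M)}|\Delta u_n|^{\frac{p+1}{p}}\zeta_\beta\,dx\ =\ b_\epsilon\Big(\int_{\overline{\Omega}}|\Delta u_n|^{\frac{p+1}{p}}\zeta_\beta\,dx-\int_{B_n(M)}|\Delta u_n|^{\frac{p+1}{p}}\zeta_\beta\,dx\Big).
\]
On $B_n(M)$ one has $|\Delta u_n|<M$, so the last integral is bounded by $M^{\frac{p+1}{p}}\,|B(x_j,2\beta)\cap\overline{\Omega}|$, which tends to $0$ as $\beta\to 0$ uniformly in $n$; and by Lemma \ref{convseqlim}(iii) the first integral converges, as $n\to\infty$, to $\int_{\overline{\Omega}}\zeta_\beta\,d\vartheta$, which by part (vii) is at least $\vartheta_j\zeta_\beta(x_j)=\vartheta_j$. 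Taking $\liminf_{n\to\infty}$, then letting $\beta\to0$ (the left-hand side being independent of $\beta$), and finally $\epsilon\to0$, one obtains the stated inequality with constant $\frac{ps-1}{(p+1)(s+1)}$.

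It only remains to turn this $\liminf$ into a genuine limit: by $(f_1)$ one has $\big|F(x,\Delta u_n)-\frac{1}{s+1}f(x,\Delta u_n)\Delta u_n\big|\le C\,|\Delta u_n|^{\frac{p+1}{p}}$, so by Lemma \ref{psltd} the scalar sequence on the left is bounded and, along a further subsequence (which may be chosen simultaneously for the finitely many $j\in J$), it converges; along it the $\liminf$ becomes the $\lim$ appearing in the statement. I expect no real obstacle here: the argument is mechanical once Lemma \ref{convseqlim} is available, and the only point requiring care is the order of limits — the low-energy mass carried by $B_n(M)$ must be discarded before sending $\beta\to0$ — which is handled exactly as for $a_{5,n,\beta}$ in Lemma \ref{convseqlim}, and is in fact simpler here since that error is controlled directly by the Lebesgue measure of a shrinking ball.
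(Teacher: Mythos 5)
Your proof is correct and follows essentially the same route as the paper: both use the nonnegativity of $F-\tfrac{1}{s+1}ft$ from $(f_4)$ to insert the cut-off, invoke \eqref{Hp-ineq} with $\theta_\ast=s$ on the high-amplitude set $A_n(\cdot)$, identify the mass $\vartheta_j$ via Lemma \ref{convseqlim}(iii) and (vii-b), and send $\beta\to0$, then $\epsilon\to0$. The only cosmetic difference is that you discard the nonnegative contribution on $B_n(M)$ outright and control $\int_{B_n(M)}|\Delta u_n|^{(p+1)/p}\zeta_\beta$ by $M^{(p+1)/p}|B(x_j,2\beta)|$, whereas the paper adds and subtracts $b_\epsilon|\Delta u_n|^{(p+1)/p}$ on $B_n(t_0)$ and bounds the resulting error as in \eqref{parte2eq}; both give the same conclusion.
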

	\begin{proof}
		
		From \eqref{Hp-ineq} and $(f_4)$, we get
		\begin{eqnarray}\label{parte1eq}
			&&\nonumber \int_{\overline{\Omega}} \left[F(x,\Delta u_n)-\frac{1}{s+1} f(x,\Delta u_n) \Delta u_n \right] dx\\
			&\geq & \displaystyle \int_{B_n(t_0)}\left[ F(x,\Delta u_n)-\frac{1}{s+1} f(x,\Delta u_n) \Delta u_n - b_{\epsilon}|\Delta u_n|^{\frac{p+1}{p}} \right]\zeta_{\beta} dx\\
			\nonumber&&+\displaystyle b_{\epsilon} \left(\displaystyle \int_{A_n(t_0)} |\Delta u_n|^{\frac{p+1}{p}} \zeta_{\beta} dx+
			\displaystyle \int_{B_n(t_0)}|\Delta u_n|^{\frac{p+1}{p}} \zeta_{\beta} dx\right) \\
			\nonumber&=&  \displaystyle \int_{B_n(t_0)}\left[F(x,\Delta u_n)-\frac{1}{s+1} f(x,\Delta u_n) \Delta u_n - b_{\epsilon}|\Delta u_n|^{\frac{p+1}{p}} \right]\zeta_{\beta} dx +\displaystyle b_{\epsilon} \displaystyle \int_{\overline\Omega} |\Delta u_n|^{\frac{p+1}{p}} \zeta_{\beta} dx,
		\end{eqnarray}
		where $\zeta_\beta$, $A_n(t_0)$ and $B_n(t_0)$ are given by \eqref{beta} and \eqref{An}, respectively. Now, note that
		\begin{multline}\label{parte2eq}
			\lim_{\beta \to 0}\limsup_{n\rightarrow \infty}\left| \displaystyle \int_{B_n(t_0)}\left[ F(x,\Delta u_n)-\frac{1}{s+1} f(x,\Delta u_n) \Delta u_n  - b_{\epsilon}|\Delta u_n|^{\frac{p+1}{p}} \right]\zeta_{\beta} dx\right|\\ 
			\leq \lim_{\beta \to 0}\limsup_{n\rightarrow \infty} \displaystyle \int_{\overline\Omega}\left[ F(x,t_0)+\frac{1}{s+1} f(x,t_0) t_0 + b_{\epsilon}|t_0|^{\frac{p+1}{p}}  \right]\zeta_{\beta} dx =0.
		\end{multline}
		From  Lemma \ref{convseqlim}, we also have
		\begin{equation}\label{parte3eq}
			\displaystyle \lim_{\beta \to 0}\limsup_{n\rightarrow \infty}b_{\epsilon} \displaystyle \int_{\overline\Omega} |\Delta u_n|^{\frac{p+1}{p}} \zeta_{\beta} dx\geq b_\epsilon\vartheta_j.
		\end{equation}
		Hence, \eqref{parte1eq}, \eqref{parte2eq},  \eqref{parte3eq} and the arbitrariness of $\epsilon>0$ yield the desired inequality.
	\end{proof}
	
	\begin{Corollary}
		\label{empty}
		If $J\neq \emptyset$, then
		$$   c\geq\begin{cases}
			\mathscr{C}_q S^{\frac{pN}{2(p+1)}} - k\mu^{\frac{q+1}{q - s}}, &\text{if } (r,s) \text{ satisfies }\eqref{rssublinear},\\
			\frac{2}{N} S^{\frac{pN}{2(p+1)}} &\text{if } (r,s) \text{ satisfies }\eqref{rssuperlinear},
		\end{cases}$$
		where $k>0$ is given by \eqref{kk}.
	\end{Corollary}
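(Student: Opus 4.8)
The plan is to run the standard concentration--compactness dichotomy for critical functionals, starting from the identity
\[
c=\lim_{n}\Big(I_F(u_n)-\tfrac{1}{\theta_*+1}\langle I_F'(u_n),u_n\rangle\Big),
\]
which holds because $I_F'(u_n)\to 0$ in $E_p^*$ and $(u_n)$ is bounded (Lemma~\ref{psltd}), so that $\langle I_F'(u_n),u_n\rangle\to 0$; here $\theta_*=q$ under \eqref{rssublinear} and $\theta_*=s$ under \eqref{rssuperlinear}, exactly as in the proof of Lemma~\ref{psltd}. Along the subsequence of Lemma~\ref{convseqlim} I will use $\int_\Omega g|u_n|^{s+1}\,dx\to\int_\Omega g|u|^{s+1}\,dx$ (from item (ii), since $s<q$) and $|u_n|_{q+1}^{q+1}\to\nu(\overline\Omega)=|u|_{q+1}^{q+1}+\sum_{i\in J}\nu_i$ (items (iv), (vii)). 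Since $J\neq\emptyset$ I fix $j\in J$, so that $\nu_j\geq S^{\frac{pN}{2(p+1)}}$ by Lemma~\ref{convseqlim}(vii)$c)$; moreover the inequality $S\,\nu_j^{\frac{p+1}{p(q+1)}}\leq\vartheta_j$ from the proof of Lemma~\ref{convseqlim}, combined with this bound and \eqref{pqHC}, also gives $\vartheta_j\geq S^{\frac{pN}{2(p+1)}}$.

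\emph{Sublinear case.} Combining the definition \eqref{functional}, the lower bound $F(x,t)\geq c_q f(x,t)t$ of $(f_4)$ and the identity $\int_\Omega f(x,\Delta u_n)\Delta u_n\,dx=\langle I_F'(u_n),u_n\rangle+\mu\int_\Omega g|u_n|^{s+1}\,dx+|u_n|_{q+1}^{q+1}$, I would obtain
\[
I_F(u_n)\;\geq\;\mu\,\mathscr{C}_s\int_\Omega g|u_n|^{s+1}\,dx+\mathscr{C}_q\,|u_n|_{q+1}^{q+1}+o(1),
\]
with $\mathscr{C}_q=c_q-\tfrac{1}{q+1}>0$ and $\mathscr{C}_s=c_q-\tfrac{1}{s+1}$. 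Letting $n\to\infty$ and using $\nu(\overline\Omega)\geq|u|_{q+1}^{q+1}+\nu_j$ together with $\nu_j\geq S^{\frac{pN}{2(p+1)}}$,
\[
c\;\geq\;\mathscr{C}_q\,S^{\frac{pN}{2(p+1)}}+\mathscr{C}_q\,|u|_{q+1}^{q+1}+\mu\,\mathscr{C}_s\int_\Omega g|u|^{s+1}\,dx.
\]
If $\mathscr{C}_s\geq 0$ the last two terms are nonnegative and the claim follows at once; the relevant case is $\mathscr{C}_s<0$, which in fact always occurs here since $s<\tfrac1p$ together with $(f_1)$--$(f_2)$ forces $c_q\leq\tfrac{p}{p+1}<\tfrac{1}{s+1}$. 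In that case I would estimate $\int_\Omega g|u|^{s+1}\,dx\leq|g|_\infty|\Omega|^{\frac{q-s}{q+1}}\bigl(|u|_{q+1}^{q+1}\bigr)^{\frac{s+1}{q+1}}$ by Hölder's inequality and absorb this into $\mathscr{C}_q|u|_{q+1}^{q+1}$ by Young's inequality with exponents $\tfrac{q+1}{s+1}$ and $\tfrac{q+1}{q-s}$; optimizing over $|u|_{q+1}^{q+1}\in[0,\infty)$ leaves exactly a loss of the form $-k\mu^{\frac{q+1}{q-s}}$ with $k$ the constant in \eqref{kk}. Hence $c\geq\mathscr{C}_q S^{\frac{pN}{2(p+1)}}-k\mu^{\frac{q+1}{q-s}}$.

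\emph{Superlinear case.} Here the perturbation terms cancel and the decomposition reads $I_F(u_n)-\tfrac{1}{s+1}\langle I_F'(u_n),u_n\rangle=\int_\Omega\bigl(F(x,\Delta u_n)-\tfrac{1}{s+1}f(x,\Delta u_n)\Delta u_n\bigr)\,dx+\tfrac{q-s}{(s+1)(q+1)}|u_n|_{q+1}^{q+1}$. Passing to the limit, Corollary~\ref{ineqnuk} bounds the first integral below by $\tfrac{ps-1}{(p+1)(s+1)}\vartheta_j$ (and $ps-1>0$ since $s\geq\tfrac1r>\tfrac1p$), while the second term tends to $\tfrac{q-s}{(s+1)(q+1)}\nu(\overline\Omega)\geq\tfrac{q-s}{(s+1)(q+1)}\nu_j$. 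Using $\vartheta_j,\nu_j\geq S^{\frac{pN}{2(p+1)}}$ and the identity $\tfrac{ps-1}{(p+1)(s+1)}+\tfrac{q-s}{(s+1)(q+1)}=\tfrac{pq-1}{(p+1)(q+1)}=\tfrac2N$, where the last equality is \eqref{pqHC}, I conclude $c\geq\tfrac2N S^{\frac{pN}{2(p+1)}}$.

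The step I expect to be the main obstacle is the sublinear estimate: one must peel off a single concentration mass with the sharp constant $\mathscr{C}_q$ while controlling the genuinely negative contribution of the concave perturbation, and the precise shape of $k$ in \eqref{kk} only appears after an optimal Hölder--Young balancing. In the superlinear case the delicate point is instead that one genuinely needs \emph{both} geometric quantities $\vartheta_j$ and $\nu_j$ to be at least $S^{\frac{pN}{2(p+1)}}$, so that they recombine via \eqref{pqHC} into the clean threshold $\tfrac2N S^{\frac{pN}{2(p+1)}}$ rather than the weaker $\tfrac{q-s}{(s+1)(q+1)}S^{\frac{pN}{2(p+1)}}$.
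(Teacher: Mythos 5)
Your proof is correct and follows essentially the same route as the paper: in the sublinear case you apply $(f_4)$ with $c_q$, pass to the limit using $|u_n|_{q+1}^{q+1}\to\nu(\overline\Omega)$ and the compact embedding for the $s$-term, peel off one atom of mass $\geq S^{\frac{pN}{2(p+1)}}$, and optimize a scalar function $\mathscr{C}_q t^{q+1}-b_0\mu t^{s+1}$ exactly as the paper does with $\Psi$; in the superlinear case you use the $I_F-\tfrac{1}{s+1}\langle I_F',\cdot\rangle$ decomposition and Corollary~\ref{ineqnuk}.

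Where you diverge slightly is in the superlinear recombination, and there your version is in fact tighter than the printed proof. The paper passes from $\frac{ps-1}{(p+1)(s+1)}\vartheta_j$ (which is what Corollary~\ref{ineqnuk} actually yields) to $\frac{ps-1}{(p+1)(s+1)}\nu_j$ and then groups the coefficients acting on $\nu_j$; but the inequality established in \eqref{nj} is $\nu_j\geq\vartheta_j$, which goes the wrong way for that substitution. You instead observe that $\vartheta_j\geq S\,\nu_j^{\frac{p+1}{p(q+1)}}\geq S^{1+\frac{N}{2(q+1)}}=S^{\frac{pN}{2(p+1)}}$ by the critical hyperbola, so that each of $\vartheta_j$ and $\nu_j$ clears the threshold separately, and then recombine the coefficients via $\frac{ps-1}{(p+1)(s+1)}+\frac{q-s}{(s+1)(q+1)}=\frac{pq-1}{(p+1)(q+1)}=\frac{2}{N}$. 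This avoids the problematic substitution entirely and is the cleaner way to reach $c\geq\frac{2}{N}S^{\frac{pN}{2(p+1)}}$. Your observation that $\mathscr{C}_s<0$ necessarily holds under \eqref{rssublinear} (because $(f_1)$--$(f_2)$ force $c_q\leq\frac{p}{p+1}<\frac{1}{s+1}$) is also a worthwhile clarification, since it explains why the $-k\mu^{\frac{q+1}{q-s}}$ penalty cannot be dropped.
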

	
	\begin{proof}
		Exploring the boundedness of $(u_n)$ in $E_p$, we get 
		\begin{equation}\label{eqseqlim}
			\lim_{n\to +\infty}\int_{\overline{\Omega}} f(x,\Delta u_n)\Delta u_n = \mu \int_{\overline{\Omega}}g|u|^{s+1}dx+|u|_{q+1}^{q+1}+\sum_{j \in J}\nu_j.
		\end{equation}
		
		First, we address the sublinear case. If \eqref{rssublinear} holds, we may invoke ($f_4$), \eqref{eqseqlim} and Lemma \ref{convseqlim} to obtain
		\begin{eqnarray*}
			\nonumber  c&=&\lim_{n\rightarrow \infty} I_F(u_n)\\
			\nonumber &=& \lim_{n\rightarrow \infty} \left[\displaystyle \int_{\overline{\Omega}} F(x,\Delta u_n)dx -\frac{\mu}{s+1}\int_{\overline{\Omega}}g|u_n|^{s+1}dx-\frac{1}{q+1}  |u_n|_{q+1}^{q+1}\right]\\
			\nonumber &\geq&\displaystyle\lim_{n\rightarrow \infty}\left[c_q \displaystyle \int_{\overline{\Omega}} f(x,\Delta u_n)\Delta u_n  -\frac{\mu}{s+1}\int_{\overline{\Omega}}g|u_n|^{s+1}dx-\frac{1}{q+1}  |u_n|_{q+1}^{q+1}\right]\\
			\nonumber &=&\displaystyle c_q\left(\mu\int_{\overline{\Omega}}g|u|^{s+1}dx+|u|_{q+1}
			^{q+1}+\sum_{j \in J}\nu_j\right)\\ & &-\frac{\mu}{s+1}\int_{\overline{\Omega}}g|u|^{s+1}dx-\frac{1}{q+1}|u|_{q+1}
			^{q+1}-\frac{1}{q+1}\sum_{j \in J}\nu_j\\
			&\geq&\displaystyle\left(c_q-\frac{1}{s+1}\right)\mu|g|_\infty|u|_{s+1}^{s+1}+\left(c_q-\frac{1}{q+1}\right)\left(|u|_{q+1}
			^{q+1}+\sum_{j \in J}\nu_j\right)\\
			&\geq & \Psi(|u|_{q+1})+\mathscr{C}_q S^{\frac{pN}{2(p+1)}}.
		\end{eqnarray*}
		where 
		$$ \Psi(t):= \mathscr{C}_q t^{q+1}-b_0\mu t^{s+1}, \quad b_0:=-|g|_\infty|\Omega|^{\frac{q-s}{q+1}}\mathscr{C}_s.$$
		Noting that $\Psi$ has a global minimum at 
		$$r_0=\begin{cases}
			0& \text{ if } b_0\leq0,\\
			\left(\frac{N(s+1)b_0\mu}{2 (q+1)}\right)^{\frac{1}{q-s}}&\text{ if } b_0>0,
		\end{cases}$$
		we get
		$$\Psi(t)\geq \Psi(r_0)\geq -k\mu^{\frac{q+1}{q-s}},$$ 
		where $k>0$ is given by \eqref{kk}.
		Hence,
		$$c\geq \mathscr{C}_q S^{\frac{pN}{2(p+1)}}-k\mu^{\frac{q+1}{q-s}}.$$
		
		Now, let us assume that \eqref{rssuperlinear} holds. From Lemma \ref{convseqlim} and Corollary \ref{ineqnuk}, we deduce
		\begin{eqnarray*}
			\nonumber  c&=&\lim_{n\rightarrow \infty} I_F(u_n)- \frac{I_F'(u_n)u_n}{s+1}\\ 
			&=&\lim_{n\rightarrow \infty}\displaystyle \int_{\Omega}  \left[F(x,\Delta u_n)-\frac{1}{s+1}f(x,\Delta u_n) \Delta u_n \right]  dx +\left(\frac{1}{s+1}-\frac{1}{q+1}\right)|u_n|_{q+1}^{q+1} \\
			&\geq &\left( \frac{p}{p+1}-\frac{1}{s+1}\right)\nu_j +\lim_{n\rightarrow \infty}\left(\frac{1}{s+1}-\frac{1}{q+1}\right)|u_n|_{q+1}^{q+1}\\
			&\geq& \left( \frac{p}{p+1}-\frac{1}{q+1}\right) \nu_j\\
			&\geq& \frac{2}{N} S^{\frac{pN}{2(p+1)}}.
		\end{eqnarray*}
		The proof is now complete.
	\end{proof}   
	
	\begin{obs}\label{point}
		{\rm Corollary \ref{empty}  reveals the location of levels $c$ where $\nu$ has no singular points. This will be crucial to conclude the convergence $u_n \to u$ in $E_p$, whose details are given below.}  
	\end{obs}
	\subsection{Convergence results}
	
	\begin{lemma}\label{ra}
		Under the notations of Lemma \ref{convseqlim}, for every $K \subset\subset \Omega \backslash\{x_j: j \in J \}$, we have
		\begin{equation*}
			\displaystyle \int_K (f(x,\Delta u_n)-f(x,\Delta u))(\Delta u_n - \Delta u)dx \to0,
		\end{equation*}
		up to a subsequence.
	\end{lemma}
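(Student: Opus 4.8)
The plan is to localize the integral with a cut-off function supported away from the concentration set $\{x_j:j\in J\}$, test the relation $I_F'(u_n)\to0$ against $\varphi(u_n-u)$, and use the fact that the limit measure $\nu$ carries no mass outside $\{x_j\}$ to kill the critical term. Concretely, since $J$ is finite by Lemma \ref{convseqlim} and $K$ is compact and disjoint from $\{x_j\}$, I would fix $\varphi\in C_c^\infty(\Omega)$ with $0\le\varphi\le1$, $\varphi\equiv1$ on $K$ and $\operatorname{supp}\varphi\subset\subset\Omega\setminus\{x_j:j\in J\}$. Because $f(x,\cdot)$ is non-decreasing, the integrand $(f(x,\Delta u_n)-f(x,\Delta u))(\Delta u_n-\Delta u)$ is pointwise nonnegative, so it suffices to prove
$$\mathcal{R}_n:=\int_{\Omega}\varphi\,\big(f(x,\Delta u_n)-f(x,\Delta u)\big)\big(\Delta u_n-\Delta u\big)\,dx\longrightarrow0 .$$

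By \cite[Lemma 3.4]{ederson} we have $\varphi u_n,\varphi u\in E_p$ with $(\varphi(u_n-u))$ bounded in $E_p$, hence $\langle I_F'(u_n),\varphi(u_n-u)\rangle\to0$. Expanding $\Delta(\varphi(u_n-u))=\varphi\,\Delta(u_n-u)+2\nabla\varphi\cdot\nabla(u_n-u)+(u_n-u)\Delta\varphi$ and invoking $(f_1)$ (so that $|f(x,\Delta u_n)|\le|\Delta u_n|^{1/p}$ is bounded in $L^{p+1}$), the terms carrying $\nabla\varphi$ and $\Delta\varphi$ vanish by Hölder's inequality together with the strong convergences $u_n\to u$ in $L^{\frac{p+1}{p}}$ and $\nabla u_n\to\nabla u$ in $(L^{\frac{p+1}{p}})^N$ from Lemma \ref{convseqlim}\,(ii),(vi). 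The subcritical perturbation term is harmless: $E_p\hookrightarrow\hookrightarrow L^{s+1}$ gives $u_n\to u$ in $L^{s+1}$ while $|u_n|^{s-1}u_n$ stays bounded in $L^{\frac{s+1}{s}}$. For the critical term I would split
$$\int_{\Omega}|u_n|^{q-1}u_n\,\varphi(u_n-u)\,dx=\int_{\Omega}|u_n|^{q+1}\varphi\,dx-\int_{\Omega}|u_n|^{q-1}u_n\,u\varphi\,dx ;$$
since $\varphi(x_j)=0$ for all $j$, Lemma \ref{convseqlim}\,(iv) and (vii) give $\int_{\Omega}|u_n|^{q+1}\varphi\,dx\to\int_{\Omega}\varphi\,d\nu=\int_{\Omega}|u|^{q+1}\varphi\,dx$, while the a.e.\ convergence and $L^{q+1}$-boundedness of $(u_n)$ yield $|u_n|^{q-1}u_n\rightharpoonup|u|^{q-1}u$ in $L^{\frac{q+1}{q}}$, so $\int_{\Omega}|u_n|^{q-1}u_n\,u\varphi\,dx\to\int_{\Omega}|u|^{q+1}\varphi\,dx$ and the difference tends to $0$.

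Feeding all this into $\langle I_F'(u_n),\varphi(u_n-u)\rangle\to0$ leaves $\int_{\Omega}f(x,\Delta u_n)\,\varphi(\Delta u_n-\Delta u)\,dx\to0$. Finally, since $f(x,\Delta u)\varphi\in L^{p+1}$ by $(f_1)$ and $\Delta u_n\rightharpoonup\Delta u$ in $L^{\frac{p+1}{p}}$, we also get $\int_{\Omega}f(x,\Delta u)\,\varphi(\Delta u_n-\Delta u)\,dx\to0$; subtracting the two limits yields $\mathcal{R}_n\to0$, which proves the lemma. I expect the critical term to be the only genuinely delicate point, since it is exactly there that the fine structure of $\nu$ from Lemma \ref{convseqlim} (no singular part outside $\{x_j\}$) must be used, whereas the rest reduces to Hölder estimates and the convergences already in hand; a minor caveat is to confirm $\varphi(u_n-u)\in E_p$ and that $\tfrac{p+1}{p}<\sigma^*$, so that the gradient convergence of Lemma \ref{convseqlim}\,(vi) applies at that exponent.
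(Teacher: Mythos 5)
Your proposal is correct and follows essentially the same route as the paper: use monotonicity of $f(x,\cdot)$ to dominate the integral over $K$ by a localized pairing against a cutoff $\varphi$ supported away from the concentration points, test $I_F'(u_n)\to 0$ against $\varphi(u_n-u)$, and kill the lower-order, gradient, subcritical and critical terms using the convergences of Lemma~\ref{convseqlim} together with the fact that $\nu$ has no singular part on $\operatorname{supp}\varphi$. The only cosmetic difference is that the paper uses the family of cutoffs $\xi_\theta$ rather than a single $\varphi$, and estimates the $\Delta\xi_\theta$-term via Hölder with exponents $(p+1,\,q+1,\,N/2)$ and the local strong $L^{q+1}$ convergence of $u_n$, whereas you use the pair $(p+1,\tfrac{p+1}{p})$ and the strong $L^{(p+1)/p}$ convergence from Lemma~\ref{convseqlim}(ii) — which is if anything slightly more elementary, since it does not require invoking the absence of concentration for that particular term.
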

	\begin{proof} Let $\delta = dist(K, \{x_j: j \in J \})$. For each $\theta \in (0, \delta)$, consider 
		$$D_{\theta} = \{ x \in \Omega: dist(x, K)< \theta\}$$
		and $\xi_\theta \in C_c^{\infty}(\Omega)$ satisfying
		$$0\leq \xi_{\theta} \leq 1, \qquad \xi_{\theta}\equiv 1 \,\, \text{on}\,\, D_{\theta/2}, \qquad \xi_{\theta} \equiv 0 \,\, \text{on} \,\, \Omega \backslash D_{\theta}.$$
		By the monotonicity of $f$, we have 
		\begin{eqnarray*}\label{integrand1}
			0 &\leq& \displaystyle \int_K (f(x,\Delta u_n)-f(x,\Delta u))(\Delta u_n - \Delta u)dx \\
			&\leq & \displaystyle \int_{\overline{\Omega}} (f(x,\Delta u_n)-f(x,\Delta u))(\Delta u_n - \Delta u)\xi_\theta dx \\ 
			&=&\displaystyle \int_{\overline{\Omega}} f(x,\Delta u_n)\Delta u_n \xi_\theta - f (x,\Delta u_n) \Delta u \xi_\theta - f(x,\Delta u)(\Delta u_n - \Delta u) \xi_\theta dx.
		\end{eqnarray*}
		
		On the other hand, the uniform boundedness of $(u_n \xi_\theta)$ and $(u \xi_\theta)$ in $E_p$ yields
		\begin{eqnarray*}
			0 & = & \lim_{n\to \infty}\langle I_F'(u_n), \xi_\theta (u_n-u)\rangle\\
			& = & \lim_{n\to \infty}\int_{\overline{\Omega}} f(x,\Delta u_n)(\Delta (u_n-u) \xi_\theta  + (u_n-u) \Delta \xi_\theta + 2 \nabla (u_n-u) \nabla \xi_\theta) dx\\
			&&-\lim_{n\to \infty}\left[\int_{\overline{\Omega}} |u_n|^{q-1}u_n \xi_\theta u dx - \!\displaystyle \int_{\overline{\Omega}} |u_n|^{q+1} \xi_\theta dx + \mu\! \!\displaystyle \int_{\overline{\Omega}} g|u_n|^{s-1} u_n \xi_\theta u dx \right]\\
			&& +\mu\lim_{n\to \infty} \!\displaystyle \int_{\overline{\Omega}} g|u_n|^{s+1} \xi_\theta dx.
		\end{eqnarray*}
		Now, applying H\" older's inequality and \cite[Lemma 2.4 and Lemma 3.6]{ederson}, we deduce 
		\begin{eqnarray*}
			0 &\leq& \int_K (f(x,\Delta u_n)-f(x,\Delta u))(\Delta u_n - \Delta u)dx \\
			&\leq &  \int_{\overline{\Omega}} f(x,\Delta u_n)\Delta\xi_\theta (u_n - u)dx\\
			& & +2\displaystyle \int_{\overline{\Omega}} f(x,\Delta u_n)  \nabla \xi_\theta \nabla (u_n -u)dx-\displaystyle \int_{\overline{\Omega}} f(x,\Delta u) (\Delta u_n - \Delta u)\xi_\theta dx  \\
			&\leq& \displaystyle C\left(\int_{D_\theta} |u_n-u|^{q+1}dx\right)^{\frac{1}{q+1}}+C\left(\int_{\overline{\Omega}} |\nabla u_n-\nabla u|^{\frac{p+1}{p}}dx\right)^{\frac{p}{p+1}}\\
			& & -\displaystyle \int_{\overline{\Omega}} f(x,\Delta u) (\Delta u_n - \Delta u)\xi_\theta dx,    
		\end{eqnarray*}
		for some $C>0$. Hence, the desired convergence follows from Lemma \ref{convseqlim}.
	\end{proof}

	\begin{lemma}
		Up to a subsequence,  $\Delta u_n \to \Delta u$ a.e. in $\Omega$.
	\end{lemma}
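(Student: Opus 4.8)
The plan is to upgrade the local monotonicity estimate of Lemma~\ref{ra} to an almost-everywhere statement by a standard exhaustion-plus-diagonal argument, and then to perform a pointwise inversion of the monotone map $f(x,\cdot)$, using the growth hypotheses $(f_2)$–$(f_3)$ only to guarantee that the relevant pointwise sequences stay bounded.

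\textbf{Step 1: from $L^1_{\mathrm{loc}}$ smallness to a.e.\ convergence.} Since $J$ is finite (Lemma~\ref{convseqlim}(vii)), fix an increasing sequence of compact sets $K_1\subset K_2\subset\cdots$ with $K_m\subset\subset\Omega\setminus\{x_j:j\in J\}$ and $\bigcup_m K_m=\Omega\setminus\{x_j:j\in J\}$. Put
\[
P_n(x):=\bigl(f(x,\Delta u_n(x))-f(x,\Delta u(x))\bigr)\bigl(\Delta u_n(x)-\Delta u(x)\bigr),
\]
which is nonnegative a.e.\ because $f(x,\cdot)$ is non-decreasing. Lemma~\ref{ra} gives $\int_{K_m}P_n\,dx\to0$ for each $m$, so a diagonal argument over $m$ produces a subsequence (still written $(u_n)$) along which $P_n\to0$ in $L^1(K_m)$ for every $m$; a second diagonal extraction then yields $P_n(x)\to0$ for a.e.\ $x\in K_m$, hence for a.e.\ $x\in\Omega\setminus\{x_j:j\in J\}$, and therefore for a.e.\ $x\in\Omega$ since $\{x_j:j\in J\}$ is a finite set.

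\textbf{Step 2: pointwise boundedness.} Fix $x$ in the full-measure set on which $\Delta u(x)$ is finite and $P_n(x)\to0$, and abbreviate $t_n:=\Delta u_n(x)$, $a:=\Delta u(x)$. I claim $(t_n)$ is bounded. If not, a subsequence satisfies $|t_{n_k}|\to\infty$, and by the oddness of $f(x,\cdot)$ we may assume $t_{n_k}\to+\infty$; then $t_{n_k}>t_0$ for $k$ large and, by $(f_3)$ (indeed already by $(f_2)$), $f(x,t_{n_k})\ge c_p t_{n_k}^{1/p}\to+\infty$, so both factors of $P_{n_k}(x)=(f(x,t_{n_k})-f(x,a))(t_{n_k}-a)$ diverge to $+\infty$, contradicting $P_n(x)\to0$.

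\textbf{Step 3: pointwise inversion and conclusion.} Let $b$ be any accumulation point of the bounded sequence $(t_n)$, say $t_{n_k}\to b$. Passing to the limit in $P_{n_k}(x)\to0$ and using the continuity of $f(x,\cdot)$ gives $\bigl(f(x,b)-f(x,a)\bigr)(b-a)=0$, whence $b=a$ by the monotonicity of $f(x,\cdot)$. Since every accumulation point of the bounded sequence $(t_n)$ equals $a$, we get $t_n\to a$, i.e.\ $\Delta u_n(x)\to\Delta u(x)$; as $x$ ranged over a set of full measure, $\Delta u_n\to\Delta u$ a.e.\ in $\Omega$, which is the assertion. (I note that, whatever the precise strength of the monotonicity, Step~3 already forces $f(x,t_n)\to f(x,a)$, so $f(x,\Delta u_n)\to f(x,\Delta u)$ a.e.\ as well, which is what enters the subsequent strong-convergence argument.)

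The heavy lifting is done by Lemma~\ref{ra}; the only genuinely delicate point here is Step~2, because one cannot invert $f(x,\cdot)$ along the pointwise sequence before knowing it is bounded, and ruling out escape to infinity is exactly where the asymptotic hypotheses $(f_2)$–$(f_3)$ are indispensable. The remainder — the passage from a local $L^1$ estimate off the concentration set to an a.e.\ statement on all of $\Omega$ — is routine measure theory once the finiteness of $J$ from Lemma~\ref{convseqlim} is in hand.
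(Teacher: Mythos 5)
You follow the paper's route at the structural level --- first upgrade Lemma~\ref{ra} to $P_n:=(f(x,\Delta u_n)-f(x,\Delta u))(\Delta u_n-\Delta u)\to 0$ a.e.\ off the concentration set, then deduce $\Delta u_n\to\Delta u$ a.e.\ --- but whereas the paper outsources the second step to the Dal Maso--Murat type result cited as \cite[Lemma 6]{masomurat}, you carry it out directly by a pointwise boundedness-then-inversion argument. Your Step~1 (exhaustion by compacts plus diagonal extraction) does the same work as the paper's invocation of the Inverse Dominated Convergence Theorem on an arbitrary $K$, and your Step~2 --- ruling out escape to infinity of $t_n=\Delta u_n(x)$ via $(f_2)$--$(f_3)$, so that both factors of $P_n(x)$ blow up with the same sign --- is exactly the content that the cited lemma conceals; making it explicit is a genuine gain in transparency.

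Two caveats. In Step~3 you pass from $(f(x,b)-f(x,a))(b-a)=0$ to $b=a$ ``by monotonicity,'' but the paper's standing hypothesis is only that $f(x,\cdot)$ is non-decreasing, under which this inference fails (take $f(x,\cdot)$ constant on an interval and $t_n$ oscillating inside it). Your argument therefore actually uses strict monotonicity of $f(x,\cdot)$; this is the same strengthening that \cite[Lemma 6]{masomurat} requires, so you are not introducing a new gap relative to the paper, but you should state the hypothesis you are really invoking rather than attributing it to ``monotonicity'' alone. Second, the closing parenthetical --- that $f(x,\Delta u_n)\to f(x,\Delta u)$ a.e.\ is all that is needed downstream --- is not correct: in the proof of Proposition~\ref{propcompacidade} the Brezis--Lieb lemma is applied with $j_x(t)=t\,f(x,t)$ to the sequence $\Delta v_n=\Delta u_n-\Delta u$, and that application genuinely requires $\Delta v_n\to 0$ a.e., not merely $f(x,\Delta v_n)\to 0$ a.e. So the full strength of the present lemma is in fact used.
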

	\begin{proof}
		Let $K\subset\subset \Omega\backslash\{ x_j: j \in J \}$. From Lemma \ref{ra} and by the Inverse Dominated Convergence Theorem, there exists a subsequence $(u_n)$ such that
		$$(f(x,\Delta u_n)-f(x,\Delta u))(\Delta u_n - \Delta u)\to 0 \,\, \,\,\text{a.e. in} \,\, \, K.$$ 
		Using \cite[Lemma 6]{masomurat}, we get $\Delta u_n \rightarrow \Delta u$ a.e.  in  $K$. Since $K$ is an arbitrary compact subset of $\Omega\backslash\{ x_j: j \in J \}$, we conclude that $\Delta u_n \rightarrow \Delta u$ a.e.  in $\Omega$.
	\end{proof}
	
	\begin{lemma}
		Up to a subsequence, $f(x,\Delta u_n) \rightharpoonup f(x,\Delta u) \text{ in } L^{p+1}$.
	\end{lemma}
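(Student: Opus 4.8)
The plan is to combine the pointwise bound coming from $(f_1)$ with the a.e.\ convergence $\Delta u_n \to \Delta u$ just obtained, and then to appeal to the elementary fact that a norm-bounded sequence in $L^{p+1}(\Omega)$ which converges a.e.\ converges weakly to the same limit.

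First I would record that, since $f(x,\cdot)$ is odd and non-decreasing (so that $f(x,0)=0$ and $f(x,t)\ge 0$ for $t\ge 0$), assumption $(f_1)$ upgrades to $|f(x,t)|\le |t|^{1/p}$ for every $(x,t)\in\Omega\times\mathbb{R}$. Raising this inequality to the power $p+1$ and integrating over $\Omega$ gives
$$
\int_\Omega |f(x,\Delta u_n)|^{p+1}\,dx \;\le\; \int_\Omega |\Delta u_n|^{\frac{p+1}{p}}\,dx \;=\; \|u_n\|^{\frac{p+1}{p}}.
$$
By Lemma \ref{psltd} the right-hand side is bounded, so $(f(x,\Delta u_n))$ is bounded in $L^{p+1}(\Omega)$; applying the same bound to $u\in E_p$ shows in addition that $f(x,\Delta u)\in L^{p+1}(\Omega)$.

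Next, the previous lemma provides a subsequence (still denoted $(u_n)$) along which $\Delta u_n\to\Delta u$ a.e.\ in $\Omega$, and the continuity of $t\mapsto f(x,t)$ then yields $f(x,\Delta u_n)\to f(x,\Delta u)$ a.e.\ in $\Omega$. To pass from here to weak convergence I would argue as follows: since $p+1>1$, the space $L^{p+1}(\Omega)$ is reflexive, so any subsequence of $(f(x,\Delta u_n))$ admits a further subsequence converging weakly to some $w\in L^{p+1}(\Omega)$; by Mazur's lemma a sequence of convex combinations of its terms converges strongly in $L^{p+1}(\Omega)$, hence a.e.\ along a further subsequence, to $w$, which forces $w=f(x,\Delta u)$ a.e.\ in $\Omega$; since this limit is independent of the subsequence chosen, the whole (sub)sequence satisfies $f(x,\Delta u_n)\rightharpoonup f(x,\Delta u)$ in $L^{p+1}(\Omega)$. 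Alternatively, one can invoke Egorov's theorem together with the density of $L^\infty(\Omega)$ in the dual space $L^{(p+1)/p}(\Omega)$ to test directly against an arbitrary functional.

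I do not anticipate any genuine obstacle in this step: the only slightly non-routine ingredient is the standard principle that an $L^{\theta}$-bounded sequence ($\theta>1$) converging a.e.\ converges weakly to its pointwise limit, and everything else is an immediate consequence of $(f_1)$, the boundedness statement of Lemma \ref{psltd}, and the a.e.\ convergence $\Delta u_n\to\Delta u$ established just above.
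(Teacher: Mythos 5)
Your argument is correct and follows the paper's own proof exactly: both record that $|f(x,t)|\le|t|^{1/p}$ (from $(f_1)$ together with oddness and monotonicity), that $(\Delta u_n)$ is bounded in $L^{\frac{p+1}{p}}$, and that $\Delta u_n\to\Delta u$ a.e.\ in $\Omega$, and then conclude by the standard principle that an $L^{p+1}$-bounded sequence converging a.e.\ converges weakly to its pointwise limit. The paper simply asserts this last step with a ``hence,'' while you spell it out; the only small caution is that in the Mazur variant one should draw the convex combinations from tails $(f(x,\Delta u_{n_k}))_{k\ge m}$ so that they still converge a.e.\ to $f(x,\Delta u)$, an issue the Egorov alternative you mention avoids entirely.
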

	\begin{proof}
		Up to a subsequence, we have
		\begin{equation*}
			\left\{
			\begin{array}{lll}
				\Delta u_n \to \Delta u \ \  \text{a.e. in }\Omega, \\ (\Delta u_n)\text{ is bounded in } L^{\frac{p+1}{p}},  \\
				|f(x,\Delta u_n)|\leq |\Delta u_n|^{1/p}.
			\end{array}
			\right.
		\end{equation*}
		Then, $f(x,\Delta u_n) \to f(x,\Delta u)$ a.e. in $\Omega$ and $(f(x,\Delta u_n))$ is bounded in $L^{p+1}$. Hence, $f(x,\Delta u_n) \rightharpoonup f(x,\Delta u) \text{ in } L^{p+1}$.
	\end{proof}
	
	\begin{lemma}
		\label{propfracasol}
		For every $w\in E_p$, $ \langle I_F'(u), w \rangle=0$.
	\end{lemma}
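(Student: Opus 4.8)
The plan is to pass to the limit in the relation $\langle I_F'(u_n),w\rangle\to 0$, which holds for every fixed $w\in E_p$ because $(u_n)$ is a Palais--Smale sequence and hence $I_F'(u_n)\to 0$ in $E_p^*$. Writing
$$
\langle I_F'(u_n),w\rangle=\int_{\Omega}f(x,\Delta u_n)\Delta w\,dx-\mu\int_{\Omega}g|u_n|^{s-1}u_n\,w\,dx-\int_{\Omega}|u_n|^{q-1}u_n\,w\,dx,
$$
it suffices to show that each of the three integrals converges, as $n\to\infty$, to the corresponding integral with $u_n$ replaced by $u$; summing the limits then gives $\langle I_F'(u),w\rangle=0$.

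For the first integral I would invoke the preceding lemma, which gives $f(x,\Delta u_n)\rightharpoonup f(x,\Delta u)$ in $L^{p+1}$; since $w\in E_p$ forces $\Delta w\in L^{\frac{p+1}{p}}=(L^{p+1})^*$, the duality pairing passes to the limit. For the remaining two integrals I would start from $u_n\to u$ a.e.\ in $\Omega$ (Lemma \ref{convseqlim}(ii)), which yields $|u_n|^{s-1}u_n\to|u|^{s-1}u$ and $|u_n|^{q-1}u_n\to|u|^{q-1}u$ a.e.; combined with the boundedness of $(u_n)$ in $L^{s+1}$ and in $L^{q+1}$ (Lemma \ref{psltd} together with the embeddings $E_p\hookrightarrow L^{s+1}$ and $E_p\hookrightarrow L^{q+1}$), the standard fact that a.e.\ convergence plus boundedness in a reflexive space $L^{\rho}$ implies weak convergence in $L^{\rho}$ to the a.e.\ limit gives $|u_n|^{s-1}u_n\rightharpoonup|u|^{s-1}u$ in $L^{\frac{s+1}{s}}$ and $|u_n|^{q-1}u_n\rightharpoonup|u|^{q-1}u$ in $L^{\frac{q+1}{q}}$. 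Testing these weak convergences against $gw\in L^{s+1}$ (recall $|g|_\infty<\infty$ and $w\in L^{s+1}$) and against $w\in L^{q+1}$, respectively, yields the convergence of the second and third integrals, and the proof concludes.

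The only point that requires care is that, in contrast with what the criticality of the problem might suggest, no strong convergence of $u_n$ in $L^{q+1}$ is needed at this stage: when tested against a fixed $w\in E_p$, the possible concentration of the mass of $|u_n|^{q+1}$ at the points $x_j$ is not seen, and mere weak $L^{\frac{q+1}{q}}$ convergence of $|u_n|^{q-1}u_n$—which is a consequence of a.e.\ convergence and $L^{q+1}$-boundedness alone—is enough. Consequently this lemma is valid for an arbitrary Palais--Smale sequence at an arbitrary level $c$, with no restriction on $c$; the level constraint of Proposition \ref{propcompacidade} will be used only afterwards, to upgrade the weak convergence $u_n\rightharpoonup u$ to strong convergence in $E_p$.
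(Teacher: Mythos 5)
Your argument is correct and follows the same route as the paper: write $\langle I_F'(u_n),w\rangle$ as the sum of the three integrals, pass to the limit term by term using $f(x,\Delta u_n)\rightharpoonup f(x,\Delta u)$ in $L^{p+1}$ from the preceding lemma and the convergence of the nonlinear lower-order terms, and conclude since $I_F'(u_n)\to 0$. The only cosmetic difference is that the paper records $|u_n|^{s-1}u_n\to|u|^{s-1}u$ strongly in $L^{(s+1)/s}$ (available for free from the compact embedding $E_p\hookrightarrow L^{s+1}$), whereas you observe that the weak convergence already suffices; your closing remark that no level restriction on $c$ is used at this stage is accurate and consistent with how the paper structures the argument.
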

	
	\begin{proof}
		By definition of Palais-Smale sequence for $I_F$, we have $\langle I_F'(u_n), w \rangle \rightarrow 0$, for all $w \in E_p$. On the other hand, up to a subsequence, 
		\begin{equation*}
			\left\{
			\begin{array}{lll}
				f(x,\Delta u_n) \rightharpoonup f(x,\Delta u) \text{ in } L^{p+1},\\
				|u_n|^{q-1}u_n \rightharpoonup |u|^{q-1}u \text{ in } L^{\frac{q+1}{q}},\\ |u_n|^{s-1}u_n \rightarrow |u|^{s-1}u \text{ in } L^{\frac{s+1}{s}}.
			\end{array}
			\right.
		\end{equation*}
		Thus, for all $w \in E_p$, $\langle I_F'(u_n), w \rangle \rightarrow \langle I_F'(u), w \rangle,$ that is $\langle I_F'(u), w \rangle=0$.
	\end{proof}
	
	
	\subsection{Proof of Proposition \ref{propcompacidade}}
	Let $(u_n)$ be a Palais-Smale sequence for $I_F$ at level $c$ with  
	\begin{equation*}
		c <\begin{cases}
			\mathscr{C}_q S^{\frac{pN}{2(p+1)}} - k\mu^{\frac{q+1}{q - s}}, &\text{ if } (r,s) \text{ satisfies }\eqref{rssublinear},\\
			\frac{2}{N} S^{\frac{pN}{2(p+1)}}, & \text{ if } (r,s) \text{ satisfies }\eqref{rssuperlinear}.
		\end{cases}
	\end{equation*}
	where $k$ is given by \eqref{kk}. Then, Remark \ref{point} ensures that $\nu$ has no singular points, which allows for the possibility of obtaining the strong convergence \( u_n \to u \) in \( E_p \).  
	
	Using Lemma \ref{convseqlim}, \cite[Lemma 3.6]{ederson} and exploring the uniform convexity of $L^{q+1}$, we get $u_n \rightarrow u$ in $L^{q+1}$. Then, setting  $v_n:=u_n-u$, we obtain
	$$\begin{cases}
		v_n\rightharpoonup 0 \quad \text{ in} \quad E_p,\\
		\Delta v_n \rightarrow 0\quad {\rm a.e.\,\, in} \quad \Omega,\\
		v_n \rightarrow 0 \quad \text{in} \quad L^{q+1}.
	\end{cases}$$
	Now, noting that $\langle I_F'(u), u \rangle=0$ and
	$$
	|(a+b)f(x,a+b)-af(x,a)|\leq |a+b|^{\frac{p+1}{p}}+|a|^{\frac{p+1}{p}} \leq 2^p( |b|^{\frac{p+1}{p}} + |a|^{\frac{p+1}{p}}), \ \ \forall\,\, a,b \in \mathbb{R},\,\, x \in \Omega,
	$$
	we can apply \cite[Theorem 2]{brezislieb} with  $j_x(t)=tf(x,t)$, to get 
	\begin{eqnarray*}
		0 & = & \lim_{n\to \infty} \langle I_F'(u_n), u_n \rangle \\
		&= & \lim_{n\to \infty} \int_{\Omega} \left[f(x,\Delta u_n)\Delta u_n  -  |u_n|^{q+1} - \mu  g|u_n|^{s+1} \right]dx\\
		&=& \lim_{n\to \infty}\int_\Omega f(x,\Delta v_n)\Delta v_n dx.
	\end{eqnarray*}
	Therefore, by $(f_3)$ and using H\"{o}lder's inequality with
	$$\frac{1}{\alpha}+\frac{\alpha-1}{\alpha}=1, \quad \alpha:=\frac{p(r+1)}{r(p+1)}>1,$$
	we have
	\begin{eqnarray*}
		0 & = &\lim_{n\to \infty} \int_\Omega f(x,\Delta v_n)\Delta v_n dx \\
		&\geq & c_p \lim_{n\to \infty} \int_{\Omega\backslash\omega_{v_n}} |\Delta v_n|^{\frac{p+1}{p}} dx + c_r \lim_{n\to \infty} \int_{\omega_{v_n}} |\Delta v_n|^{\frac{r+1}{r}} dx \vspace{5pt} \\ 
		&\geq & c_p \lim_{n\to \infty} \int_{\Omega\backslash\omega_{v_n}} |\Delta v_n|^{\frac{p+1}{p}} dx + c_r\lim_{n\to \infty} |\Omega|^{1-\alpha} \left(\int_{\omega_{v_n}} |\Delta v_n|^{\frac{p+1}{p}} dx\right)^\alpha\\
		&\geq& 0,
	\end{eqnarray*}
	where $\omega_{v_n}$ is given by \eqref{omegau}. Hence, $u_n \to u$ in $E_p$. This concludes the proof of Proposition \ref{propcompacidade}. \qed

	\section{The sublinear case: multiplicity of weak solutions}\label{sec-sub}
	In order to obtain the existence of infinitely many weak solutions for \eqref{prob}-\eqref{navi} and \eqref{prob}-\eqref{Diri} with \eqref{rssublinear}, we must find a (possibly finite) sequence $(c_l)$ of critical values of $I_F$ corresponding to an infinite sequence $(u_k) \subset E_p$ of critical points. Precisely, our main result reads as follows.
	\begin{theorem}\label{theo1}
		Under assumptions \eqref{rssublinear} and ($f_1$)-($f_4$), there exists $\mu_0 > 0$ such that \eqref{prob}-\eqref{navi} and \eqref{prob}-\eqref{Diri} respectively admit infinitely many weak solutions for every $\mu \in (0, \mu_0)$. 
	\end{theorem}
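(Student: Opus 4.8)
The plan is to exploit the even symmetry of $I_F$ --- since $f(x,\cdot)$ is odd, $F(x,\cdot)$ is even, so that $I_F(-u)=I_F(u)$ and $I_F(0)=0$ --- and to combine a truncation with Krasnosel'skii's genus and the equivariant (Clark) deformation lemma. The obstruction to a direct application of symmetric minimax is that $I_F$ is \emph{not} bounded below: since $q+1>\frac{p+1}{p}$, which follows from \eqref{pqHC}, the bound $I_F(u)\ge h(\|u\|)$ of Lemma \ref{prop-fun-h} has $h(t)\to-\infty$ as $t\to\infty$. So one must first replace $I_F$ by a functional that is bounded below but still coincides with $I_F$ on the region carrying the negative-energy critical points.

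Pick $\mu_0>0$ small. Inspecting $h$: the exponent $s+1$ is the smallest among those present --- $s+1<\frac{p+1}{p}\le\frac{r+1}{r}$ because $s<\frac1p<\frac1r$, and $s+1<q+1$ --- so for $\mu>0$ small $h$ is negative near $0$, nonnegative on an intermediate interval, and tends to $-\infty$; hence there are $0<\rho_1<\rho_2$, uniform for $\mu\in(0,\mu_0)$, with $h\ge0$ on $[\rho_1,\rho_2]$. Fix a smooth even cut-off $\chi:[0,\infty)\to[0,1]$ with $\chi\equiv1$ on $[0,\rho_1]$ and $\chi\equiv0$ on $[\rho_2,\infty)$, and set
$$\widetilde I_F(u):=\int_\Omega F(x,\Delta u)\,dx-\frac{\mu}{s+1}\int_\Omega g|u|^{s+1}\,dx-\frac{\chi(\|u\|)}{q+1}|u|_{q+1}^{q+1}.$$
Since $\chi(\|\cdot\|)$ is constant near the origin and $\|\cdot\|=|\Delta\cdot|_{\frac{p+1}{p}}$ is $C^1$ on $E_p\setminus\{0\}$, $\widetilde I_F$ is an even functional in $C^1(E_p,\mathbb R)$ with $\widetilde I_F(0)=0$. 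I would then verify: (a) $\widetilde I_F(u)\ge0$ whenever $\|u\|\ge\rho_1$ --- for $\rho_1\le\|u\|\le\rho_2$ one has $\widetilde I_F\ge I_F\ge h(\|u\|)\ge0$, and for $\|u\|>\rho_2$ the critical term disappears and the lower bounds for $\int_\Omega F(x,\Delta u)\,dx$ obtained in the proof of Lemma \ref{prop-fun-h}, together with $\frac{p+1}{p}>s+1$ and $\frac{r+1}{r}>s+1$, force $\widetilde I_F(u)\ge0$ once $\mu_0$ is small; (b) hence $\{\widetilde I_F<0\}\subset\{\|u\|<\rho_1\}$, where $\chi\equiv1$ and so $\widetilde I_F\equiv I_F$; (c) $\widetilde I_F$ is bounded below, being nonnegative outside $\{\|u\|<\rho_1\}$ and $\ge\inf_{[0,\rho_1]}h$ inside it; (d) after shrinking $\mu_0$ so that $k\mu_0^{\frac{q+1}{q-s}}<\mathscr C_qS^{\frac{pN}{2(p+1)}}$ (possible since $\mathscr C_q>0$ by $(f_4)$), $\widetilde I_F$ satisfies $(PS)_c$ for every $c<0$: a Palais--Smale sequence for $\widetilde I_F$ at such a level eventually lies in $\{\|u\|<\rho_1\}$ by (b), hence is Palais--Smale for $I_F$ at the same level $c<0<\mathscr C_qS^{\frac{pN}{2(p+1)}}-k\mu^{\frac{q+1}{q-s}}$, and Proposition \ref{propcompacidade} yields a convergent subsequence.

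Next I would set up the genus minimax for $\widetilde I_F$. Writing $\Sigma$ for the closed symmetric subsets of $E_p\setminus\{0\}$, $\gamma$ for the Krasnosel'skii genus, $\Gamma_j:=\{A\in\Sigma:\gamma(A)\ge j\}$ and $c_j:=\inf_{A\in\Gamma_j}\sup_{u\in A}\widetilde I_F(u)$, item (c) gives $c_j>-\infty$ and $\Gamma_{j+1}\subset\Gamma_j$ gives $c_j\le c_{j+1}$. To get $c_j<0$, fix a $j$-dimensional subspace $V_j\subset E_p$ spanned by functions supported in a fixed closed ball $B\subset\Omega$ with $g\ge g_0>0$ on $B$; on $V_j$ all norms are equivalent, and $(f_1)$ in the form $F(x,t)\le\frac{p}{p+1}|t|^{\frac{p+1}{p}}$ gives, for $u\in V_j$,
$$\widetilde I_F(u)\le\int_\Omega F(x,\Delta u)\,dx-\frac{\mu}{s+1}\int_\Omega g|u|^{s+1}\,dx\le\frac{p}{p+1}\|u\|^{\frac{p+1}{p}}-\mu\,\kappa_j\|u\|^{s+1}$$
for some $\kappa_j>0$; since $s+1<\frac{p+1}{p}$, this is negative on $S_\rho^{V_j}:=\{u\in V_j:\|u\|=\rho\}$ for $\rho$ small, and $S_\rho^{V_j}$ is compact, symmetric, of genus $j$, so $c_j\le\max_{S_\rho^{V_j}}\widetilde I_F<0$. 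By standard genus theory and the Clark deformation lemma --- applicable since $(PS)_c$ holds for all $c<0$ --- each $c_j$ is a critical value of $\widetilde I_F$, and if $c_j=c_{j+1}=\cdots=c_{j+m}=:c$ then $\gamma(K_c)\ge m+1$. Because $(PS)_c$ makes $K_c$ compact, symmetric and $0\notin K_c$, $\gamma(K_c)<\infty$; thus the nondecreasing sequence $(c_j)\subset(-\infty,0)$ is not eventually constant, so it assumes infinitely many distinct values, each carrying at least one critical point of $\widetilde I_F$.

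Every such critical point $u$ has $\widetilde I_F(u)<0$, hence $\|u\|<\rho_1$ by (b), hence $I_F'(u)=\widetilde I_F'(u)=0$, i.e.\ $u$ is a nontrivial weak solution of \eqref{prob}-\eqref{navi} (taking $E_p$ as the Navier space) or of \eqref{prob}-\eqref{Diri} (taking $E_p$ as the Dirichlet space); the argument is identical for both, only the definition of $E_p$ changing. These critical points being infinitely many and distinct, the theorem follows with $\mu_0$ the smallest of the thresholds used. The step I expect to be the real obstacle is the truncation: one must pin down the shape of $h$ for small $\mu$ precisely enough that $\widetilde I_F$ is simultaneously bounded below, equal to $I_F$ on $\{\widetilde I_F<0\}$, and compact (in the $(PS)$ sense) at negative levels through Proposition \ref{propcompacidade}; granted that, the genus argument is routine.
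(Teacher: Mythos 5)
Your proposal follows essentially the same route as the paper: truncate the critical term via a cut-off on $\|u\|$ so that the resulting even functional is bounded below, coincides with $I_F$ on the sublevel set $\{\text{truncated functional}<0\}$, and inherits the $(PS)_c$ compactness of Proposition \ref{propcompacidade} at negative levels when $\mu$ is small; then run Krasnosel'skii-genus minimax on finite-dimensional spheres and invoke Clark's deformation lemma to produce infinitely many negative critical values. The paper's proof (Lemmas \ref{j-prop} and \ref{genus-finite-subspace}, followed by the concluding minimax argument) is point-for-point the same construction, with your $\rho_1,\rho_2$ playing the role of $R_0,R_1$ and your observation that $\gamma(K_c)<\infty$ forbids an eventually constant sequence $(c_j)$ matching the paper's use of $(G_5)$.
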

	
	The proof of Theorem \ref{theo1} relies on the construction of a suitable truncated energy functional $\overline{I}_F$ such that its negative critical values correspond to the critical values of $I_F$. 
	
	\subsection{The truncated functional}
	
	Let $\mu>0$ be small enough such that the function $h$ defined by \eqref{hh} attains its positive maximum. 
	Let \( R_0 \) and \( R_1 \) be the first and second positive roots of \( h(t) \), respectively. Inspired by the ideas from \cite{AP1}, we consider a non-increasing \( C^{\infty} \) function \( \tau: \mathbb{R}^{+} \rightarrow [0, 1] \) satisfying
	\[
	\tau(t) = \left\{\begin{array}{l}
		1, \text { if } t \leqslant R_0 \\
		0, \text { if } t \geqslant R_1,
	\end{array}\right.
	\]
	and set 
	\begin{equation}\label{truncatedfunctional}
		\overline{I}_F(u)=\displaystyle \int_{\Omega}\left[ F(x,\Delta u)-\frac{\mu}{s+1} g |u|^{s+1} -\frac{1}{q+1}   |u|^{q+1}\varphi(u)\right]dx,
	\end{equation}
	where \( \varphi(u) := \tau(\|u\|) \).
	Repeating verbatim the arguments used in the proof of Proposition \ref{prop-fun-h}, we arrive at
	$$\overline{I}_F(u)\geq \overline{h}(\Vert u\Vert),$$
	with
	$$
	\overline{h}(t) = \begin{cases}
		\displaystyle(2|\Omega|)^{\frac{r-p}{r(p+1)}}C_r t^{\frac{r+1}{r}}-\frac{C_{g,s}\mu}{s+1} t^{s+1} -\frac{S^{-1}}{q+1}\tau(t)t^{q+1},& \text{ if } t \leq  \left(\frac{C_p}{C_r}\right)^\frac{pr}{p-r}|\Omega|^\frac{p}{p+1}\\
		C_p  t^{\frac{p+1}{p}} \displaystyle -\frac{C_{g,s}\mu}{s+1} t^{s+1} -\frac{S^{-1}}{q+1}\tau(t)t^{q+1},&  \text{ if } t >  \left(\frac{C_p}{C_r}\right)^\frac{pr}{p-r}|\Omega|^\frac{p}{p+1},
	\end{cases}.
	$$
	In particular, $\overline{h}=h$ when $t\leq R_0$, while 
	$$
	\overline{h}(t) = \begin{cases}
		\displaystyle	(2|\Omega|)^{\frac{r-p}{r(p+1)}}C_r t^{\frac{r+1}{r}}-\frac{C_{g,s}\mu}{s+1} t^{s+1},& \text{ if } t \leq  \left(\frac{C_p}{C_r}\right)^\frac{pr}{p-r}|\Omega|^\frac{p}{p+1}\\
		C_p  t^{\frac{p+1}{p}} \displaystyle -\frac{C_{g,s}\mu}{s+1} t^{s+1},&  \text{ if } t >  \left(\frac{C_p}{C_r}\right)^\frac{pr}{p-r}|\Omega|^\frac{p}{p+1},
	\end{cases}
	$$
	when $t\geq R_1$.
	
	The next result highlights the relationship between the critical values of the functionals $I_F$ and $\overline{I}_F$.
	\begin{lemma}\label{j-prop}
		The functional $\overline{I}_F$ given by \eqref{truncatedfunctional} satisfies the following properties: 
		\begin{itemize}
			\item $\overline{I}_F\in C^1(E_p)$;
			\item If $\overline{I}_F(u)\leq 0$ then $\Vert u \Vert<R_0$ and $I_F(v)=\overline{I}_F(v)$ for all $v$ in a small neighborhood of u;
			\item If $\mu$ is small enough, $\overline{I}_F$ satisfies the $(PS)_c$ condition with $c<0$.
		\end{itemize}
	\end{lemma}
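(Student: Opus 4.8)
\emph{Plan.} The guiding fact is that $\overline{I}_F$ coincides with $I_F$, together with its derivative, on the open ball $B_{R_0}:=\{u\in E_p:\|u\|<R_0\}$, since there $\varphi=\tau(\|u\|)\equiv1$; I would prove the three assertions in the stated order. For the $C^1$-regularity, write $\overline{I}_F(u)=I_F(u)+\frac{1}{q+1}\bigl(1-\tau(\|u\|)\bigr)|u|_{q+1}^{q+1}$. Since $I_F\in C^1(E_p)$ it suffices to see that $u\mapsto\tau(\|u\|)|u|_{q+1}^{q+1}$ is $C^1$. Now $u\mapsto|u|_{q+1}^{q+1}$ is $C^1$ via the embedding $E_p\hookrightarrow L^{q+1}$ and the $C^1$-regularity of $v\mapsto|v|_{q+1}^{q+1}$ on $L^{q+1}$ (here $q+1>1$); and $u\mapsto\tau(\|u\|)$ is $C^1$ because on $E_p\setminus\{0\}$ it is the composition of $\tau\in C^\infty$ with $u\mapsto\|u\|=|\Delta u|_{(p+1)/p}$, which is $C^1$ off the origin since $\frac{p+1}{p}>1$, while on $B_{R_0}$ it equals the constant $1$. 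A product of $C^1$ functionals is $C^1$, which gives the first claim.

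\textbf{The sublevel set $\{\overline{I}_F\le0\}$.} By the argument of Lemma~\ref{prop-fun-h}, $\overline{I}_F(u)\ge\overline h(\|u\|)$, so one only has to locate the zeros of $\overline h$. Since $\overline h=h$ on $[0,R_0]$, and $h$ is negative near $0$ (because $s+1<\min\{\frac{r+1}{r},\,q+1\}$), $h$ is negative on all of $(0,R_0)$, with $\overline h(R_0)=h(R_0)=0$. On $[R_0,R_1]$, $\overline h=h+\frac{S^{-1}}{q+1}(1-\tau)t^{q+1}\ge h\ge0$. On $[R_1,\infty)$, $\tau\equiv0$, so $\overline h$ reduces to $C_p t^{(p+1)/p}-\frac{C_{g,s}\mu}{s+1}t^{s+1}$ (or its $C_r$-analogue on an initial subinterval); as $\frac{p+1}{p}-(s+1)=\frac1p-s>0$ by \eqref{rssublinear}, this branch has a single positive zero, which lies below $R_1$ because $\overline h(R_1)=\frac{S^{-1}}{q+1}R_1^{q+1}>0$. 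Thus $\overline h>0$ on $(R_0,\infty)$, so $\overline{I}_F(u)\le0$ forces $\|u\|<R_0$, and on $B_{R_0}$ we have $\overline{I}_F\equiv I_F$, whence $I_F(v)=\overline{I}_F(v)$ for $v$ near $u$.

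\textbf{$(PS)_c$ for $c<0$.} Let $(u_n)$ be a Palais--Smale sequence for $\overline{I}_F$ at level $c<0$. For large $n$, $\overline{I}_F(u_n)<c/2<0$, hence $\overline h(\|u_n\|)<c/2$; since $\overline h$ is continuous with $\overline h(0)=\overline h(R_0)=0>c/2$ and $\overline h>0$ on $(R_0,\infty)$, the set $\{t:\overline h(t)\le c/2\}$ is a compact subset of $(0,R_0)$, so $\|u_n\|\le R_0-\delta$ for some $\delta>0$ and all large $n$. Hence $(u_n)$ is bounded and, $\varphi$ being $\equiv1$ on $B_{R_0}$, we get $\overline{I}_F(u_n)=I_F(u_n)$ and $\overline{I}_F'(u_n)=I_F'(u_n)$ eventually, that is, $(u_n)$ is a Palais--Smale sequence for $I_F$ at level $c$. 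Since $q>s$, the exponent $\frac{q+1}{q-s}$ is positive, so $k\mu^{\frac{q+1}{q-s}}\to0$ as $\mu\to0^+$; choosing $\mu$ small enough that $h$ attains a positive maximum (so that $R_0<R_1$ exist) and $k\mu^{\frac{q+1}{q-s}}<\mathscr{C}_q S^{\frac{pN}{2(p+1)}}$, we obtain $c<0<\mathscr{C}_q S^{\frac{pN}{2(p+1)}}-k\mu^{\frac{q+1}{q-s}}$, and Proposition~\ref{propcompacidade} yields a convergent subsequence in $E_p$.

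\textbf{Main obstacle.} The only step needing care is the sign analysis of $\overline h$ above — notably verifying that the branches remaining after deletion of the $t^{q+1}$ term stay nonnegative for \emph{every} $t\ge R_1$ (and match continuously the $C_r$- and $C_p$-pieces at their interface) — since it is precisely this, rather than any new compactness ingredient, that both confines negative-level Palais--Smale sequences to $B_{R_0}$ and identifies the low-energy critical points of $\overline{I}_F$ with those of $I_F$.
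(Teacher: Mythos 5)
Your proof is correct and follows the paper's (much terser) approach: establish $C^1$ regularity, show $\overline{h}>0$ on $(R_0,\infty)$ to confine the sublevel set $\{\overline{I}_F\le 0\}$ and negative-level Palais--Smale sequences to $B_{R_0}$, and transfer compactness from $I_F$ via Proposition~\ref{propcompacidade}. One minor slip: the $C_r$- and $C_p$-pieces of $\overline{h}$ do not actually match continuously at the interface $t=(C_p/C_r)^{pr/(p-r)}|\Omega|^{p/(p+1)}$ (the $C_r$-branch value there is smaller by the factor $2^{(r-p)/(r(p+1))}<1$), but this is harmless, since the argument only requires the nonnegativity of each piece on its own domain, which you verify correctly.
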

	\begin{proof}
		Clearly, $ \overline{I}_F\in C^1(E_p)$. Now, if \( \|u\| > R_0 \) then \( \overline{I}_F(u) \geq \overline{h}(\|u\|) >0\), which shows the second assertion. Finally, observe that every Palais-Smale sequence for $\overline{I}_F$ at level \( c < 0 \) must be bounded. Then, if \( \mu \) is sufficiently small such that \[ \mathscr{C}_q  S^{\frac{pN}{2(p+1)}}-k\mu^\frac{s+1}{q-s}>0,\]
		we can invoke Proposition \ref{propcompacidade} to conclude that $\overline{I}_F$ satisfies the $(PS)_c$ condition at any level $c<0$.
	\end{proof}
	
	To construct a suitable sequence of negative critical values for \( \overline{I}_F \), we use the concept of genus of a set.
	
	\begin{definition}
		Let \( X\) be a Banach space and denote by	
		$$\mathcal{S}(X) := \{ A \subset X \setminus \{0\} : A \text{ is closed and if } u\in A \Longrightarrow  -u\in A \}.$$
		Let \( A \in \mathcal{S}(X) \). We say that \( A \) has genus \( n \) if \( n \) is the smallest natural number for which there exists a continuous odd function \( \varphi : A \to \mathbb{R}^n \setminus \{0\} \). The genus of \( A \) will be denoted by \( \gamma(A) \). If \( A = \emptyset \), we define \( \gamma(A) = 0 \). If \( A \neq \emptyset \) and there is no \( n \in \mathbb{N} \) such that there exist continuous odd functions \( f : A \to \mathbb{R}^n \setminus \{0\} \), we define \( \gamma(A) = \infty \).
	\end{definition}
	For every $A, B \in \mathcal{S}(X)$, we have \cite{Castro1980}:
	\begin{enumerate}[label = $\rm(G_{\arabic*})$]
		\item If there exists a continuous odd function \( \varphi : A \to B \), then \( \gamma(A) \leq \gamma(B) \);
		
		\item If \( A \subset B \), then \( \gamma(A) \leq \gamma(B) \);
		
		\item If \( \gamma(B) < \infty \), then \( \gamma(\overline{A - B}) \geq \gamma(A) - \gamma(B) \);
		
		\item If \( A \) is compact, then \( \gamma(A) < \infty \), and there exists \( \delta > 0 \) such that \( \gamma(A) = \gamma(\overline{N_{\delta}(A)}) \), where \( N_{\delta}(A) = \{ x \in X ,\, d(x, A) < \delta \} \);	
		\item $\gamma(A)\leq \#A$, where $\#A$ denotes the cardinality of $A$;
		\item If \( S^{N-1} \) is the sphere in \( \mathbb{R}^N \), then \( \gamma(S^{N-1}) = N. \)
	\end{enumerate}
	
	For every $a>0$, we adopt the notation
	\[ \mathcal{I}^{a} := \{ u \in E_p,\,\,\overline{I}_F(u) \leq a \} .\]
	\begin{lemma}\label{genus-finite-subspace}
		Given \( n \in \mathbb{N} \), there exists \( \varepsilon = \varepsilon(n) > 0 \) such that
		\[
		\gamma\left(\mathcal{I}^{-\varepsilon}\right) \geq n.
		\] 
	\end{lemma}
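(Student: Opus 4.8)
The plan is to exploit the sublinear growth of the perturbation term: near the origin the functional $\overline{I}_F$ is dominated by the negative term $-\frac{\mu}{s+1}\int_\Omega g|u|^{s+1}\,dx$ when restricted to a finite-dimensional subspace, because on such a subspace all norms are equivalent and $s+1 < 2 \le \frac{r+1}{r}$ (more precisely $s+1 < \frac{r+1}{r}$ since $s < 1/p < 1/r$ would need care, but in any case $s+1<\frac{r+1}{r}$ as $sr<1$), so the positive leading term $C_r t^{(r+1)/r}$ or $C_p t^{(p+1)/p}$ is of higher order in $t$ than $t^{s+1}$. The strategy is the standard Clark-type argument: choose an $n$-dimensional subspace $E_n \subset E_p$, restrict $\overline{I}_F$ to a small sphere $S_\rho \cap E_n$, show it is strictly negative there for $\rho$ small, take $\varepsilon$ to be (minus) the supremum of $\overline{I}_F$ on that sphere, and conclude via property $(G_1)$ (an odd homeomorphism from the sphere into $\mathcal{I}^{-\varepsilon}$) together with $(G_6)$ that $\gamma(\mathcal{I}^{-\varepsilon}) \ge \gamma(S_\rho \cap E_n) = n$.

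Concretely, I would proceed as follows. First, fix an $n$-dimensional subspace $E_n$ of $E_p$ (this is possible since $E_p$ is infinite-dimensional). On $E_n$ the norm $\|\cdot\|$ and the $L^{s+1}$-norm are equivalent, so there is a constant $K_n>0$ with $\|u\| \le K_n |u|_{s+1}$ for all $u \in E_n$; also, since $g$ is positive and continuous on $\Omega$, one has $\int_\Omega g|u|^{s+1}\,dx \ge \delta_n |u|_{s+1}^{s+1} \ge \delta_n K_n^{-(s+1)}\|u\|^{s+1}$ for some $\delta_n>0$. Second, using $(f_1)$ together with $F(x,t)\le \int_0^{|t|} \sigma^{1/p}\,d\sigma = \frac{p}{p+1}|t|^{(p+1)/p}$, and the embedding $E_p \hookrightarrow L^{q+1}$, obtain an upper bound of the form
\[
\overline{I}_F(u) \le \frac{p}{p+1}\|u\|^{\frac{p+1}{p}} - \frac{\mu\, \delta_n K_n^{-(s+1)}}{s+1}\|u\|^{s+1} + \frac{S^{-1}}{q+1}\|u\|^{q+1}, \qquad u \in E_n,
\]
where I drop the cut-off $\varphi$ by noting $0\le \varphi \le 1$ so $-\frac{1}{q+1}|u|^{q+1}\varphi(u) \le 0$; wait — the last term should then be absent, giving $\overline{I}_F(u)\le \frac{p}{p+1}\|u\|^{(p+1)/p} - c\,\mu\,\|u\|^{s+1}$ on $E_n$, with $c=\delta_n K_n^{-(s+1)}/(s+1)$. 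Since $s+1 < \frac{p+1}{p}$ (as $sp<1$), for $\rho>0$ small enough the right-hand side is strictly negative on the sphere $\{u\in E_n : \|u\|=\rho\}$; set
\[
\varepsilon := -\sup_{u\in E_n,\ \|u\|=\rho} \overline{I}_F(u) > 0.
\]

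Third, the sphere $\Sigma_\rho := \{u\in E_n : \|u\|=\rho\}$ is a compact symmetric set homeomorphic (via an odd map) to $S^{n-1}$, so $\gamma(\Sigma_\rho)=n$ by $(G_6)$ and $(G_1)$; moreover $\Sigma_\rho \subset \mathcal{I}^{-\varepsilon}$ by the choice of $\varepsilon$, and $\mathcal{I}^{-\varepsilon}$ is closed and symmetric (since $\overline{I}_F$ is even, which follows from $f(x,\cdot)$ being odd so $F(x,\cdot)$ is even and $\varphi(u)=\tau(\|u\|)$ is even, and the other terms are even). Hence by $(G_2)$, $\gamma(\mathcal{I}^{-\varepsilon}) \ge \gamma(\Sigma_\rho) = n$, which is the claim. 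The main technical point to be careful about is verifying $s+1 < \frac{p+1}{p}$: this is exactly $sp<1$, i.e. $s<1/p$, which is precisely the sublinear hypothesis \eqref{rssublinear}; this is what makes the negative $L^{s+1}$-term dominate near the origin and is the crux of the whole argument. A secondary point is ensuring the chosen $\rho$ satisfies $\rho < R_0$ if one wants $\overline{I}_F$ to literally coincide with $I_F$ there, though for the genus estimate alone this is not needed.
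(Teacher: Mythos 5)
Your argument is correct and is essentially the paper's own proof: restrict to an $n$-dimensional subspace $E_{p,n}$, use $(f_1)$ to bound $F$ from above by $\frac{p}{p+1}|t|^{(p+1)/p}$, exploit $s+1<\frac{p+1}{p}$ (i.e.\ $sp<1$) so the $-\mu\,\rho^{s+1}$ term dominates on a small sphere, and pass from a sphere of genus $n$ contained in $\mathcal{I}^{-\varepsilon}$ to $\gamma(\mathcal{I}^{-\varepsilon})\geq n$ via $(G_2)$ and $(G_6)$. One small caution: positivity of $g$ on the open set $\Omega$ alone does not give a uniform lower bound $g\geq\delta_n>0$ on $\Omega$; the cleaner justification (as in the paper) is to observe that $u\mapsto\int_\Omega g|u|^{s+1}\,dx$ attains a positive minimum $\beta_n$ on the compact unit sphere of $E_{p,n}$ and then homogenize.
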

	\begin{proof}
		For each \( n \in \mathbb{N}\), let \( E_{p,n} \) be an \( n \)-dimensional subspace of \( E_p \). Consider \( u_n \in E_{p,n} \) with \( \lVert u_n \rVert = 1 \). For \( 0 < \rho < R_0 \), we have
		\[
		\overline{I}_F(\rho u_n) = I_F(\rho u_n) = \displaystyle \int_{\Omega} \left[F(x,\rho\Delta u_n)-\frac{\mu\rho^{s+1}}{s+1} \displaystyle g|u_n|^{s+1} -\frac{\rho^{q+1}}{q+1} |u_n|^{q+1}\phi(u_n)\right]dx.
		\]	
		Setting,
		\[
		\alpha_n = \inf\left\{\int_\Omega |u_n|^{q+1} \, dx \ :\ u_n \in E_{p,n}, \lVert u_n \rVert = 1\right\} > 0,
		\]
		
		\[
		\beta_n = \inf\left\{\int_\Omega g|u_n|^{s+1} \, dx \ :\ u_n \in E_{p,n}, \lVert u_n \rVert = 1\right\} > 0.
		\]
		we have
		\[
		\overline{I}_F(\rho u_n) \leq \frac{p}{p+1}\rho^{\frac{p+1}{p}} - \frac{\mu\beta_n}{s+1}\rho^{s+1} - \frac{\alpha_n}{q+1}\rho^{q+1}.
		\]
		At this moment, we pick \( \varepsilon=\varepsilon(n) >0\)  and \( \rho_0< R_0 \) such that \( \overline{I}_F(\rho_0 u_n) \leq -\varepsilon \) for \( u_n \in E_{p,n} \) with \( \lVert u_n \rVert = 1 \). Then,
		\[
		\{ u \in E_p :\, \lVert u \rVert = \rho_0 \}  \cap E_{p,n} \subset \mathcal{I}^{-\varepsilon}.
		\]
		Finally,  (G$_{2}$) and (G$_6$) give us
		\[
		\gamma\left(\mathcal{I}^{-\varepsilon}\right) \geq \gamma(\{ u \in E_p  :\, \lVert u \rVert = \rho_0 \} \cap E_{p,n}) = n,
		\qedhere
		\]
	\end{proof}
	
	Now we are in a position to show our main result.
	\subsection{Proof of Theorem \ref{theo1}}
	Let \( \Sigma_l := \{ Y \subset E_p \setminus \{ 0 \},\ Y = \overline{Y} = -Y, \ \gamma(Y) \geq l \} \),
	$$ c_l := \inf_{Y \in \Sigma_l} \sup_{u \in Y} \overline{I}_F(u) \quad \text{and} \quad K_c := \{ u \in E_p : \overline{I}_F^\prime(u) = 0, \overline{I}_F(u) = c \}. $$
	
	If there are infinitely many distinct $c_l$, the result is proved. Otherwise,  infinitely many $c_l$ are equal. Let $\mu$ be suitable small satisfying the conditions of Lemma \ref{j-prop}. by Lemma \ref{genus-finite-subspace}, for every \( l \in \mathbb{N} \), there exists \( \varepsilon(l) > 0 \) such that \( \gamma(\mathcal{I}^{-\varepsilon}) \geq l \). Since \( \overline{I}_F \) is even and continuous, \( \mathcal{I}^{-\varepsilon} \in \Sigma_l \) and \( c_l > -\infty \) for all \( l \).
	
	Now, we claim that, if \[ c = c_l = c_{l+1} = \dots = c_{l+m}, \quad m \in \mathbb{N},\]
	then \( \gamma(K_c) \geq m + 1 \). 
	Indeed, suppose by contradiction that  \( \gamma(K_c) \leq m \). By the compactness of \( K_c \) and the fact that \( \overline{I}_F \) satisfies the $(PS)$ condition on \( K_c \) for \( c < 0 \), the property $(G_4)$ guarantees the existence of a closed and symmetric set \( U \) such that \( K_c \subset U \) and \( \gamma(U) \leq m \). By the Clark's Deformation Lemma \cite{Castro1980, R2}, there exists an odd homeomorphism
	\[
	\eta : E_p \to E_p
	\]
	such that \( \eta(\mathcal{I}^{c+\delta} - U) \subset \mathcal{I}^{c-\delta} \) for some \( -c>\delta > 0 \). By definition of $c_{l+m}$, there exists \( A \in \Sigma_{l+m} \) such that
	\[
	\sup_{u \in A} \overline{I}_F(u) < c + \delta,
	\]
	which implies that \( A \subset \mathcal{I}^{c+\delta} \). Consequently,
	\begin{equation}\label{etaA-u}
		\eta(A - U) \subset \eta(\mathcal{I}^{c+\delta}) \subset \mathcal{I}^{c-\delta}.
	\end{equation}
	
	However, by $(G_1)$ and $(G_3)$ we get 
	\[ \gamma(\eta(\overline{A - U}))\geq  \gamma(\overline{A - U}) \geq \gamma(A) - \gamma(U) \geq l,\] 
	Therefore, \( \eta(\overline{A - U}) \in \Sigma_l \). As a byproduct, we have
	\[
	\sup_{u \in \eta(\overline{A - U})} \overline{I}_F(u) \geq c_l = c
	\]
	which contradicts \eqref{etaA-u}. Hence, $\gamma(K_c) \geq m + 1$ and by (G$_5$), there exist infinitely many distinct critical points associated with these critical values.
	
	This completes the proof. 
	
	\section{The superlinear case: existence of weak solutions}\label{sec-sup}
	
	
	To establish the main results of this section, we need some additional assumptions.
	
	\begin{enumerate}
		\item[$(f_5)$] {\it Additional dimensional relations:} We assume that 
		$$s+1>\begin{cases}
			q-p, & \text{if}\,\,\,\,  \dfrac{2}{N-2} < p < p_1(N)\\[10pt]
			\dfrac{(p-1)(q+1)}{p}, & \text{if}\,\,\,\,  p \geq p_2(N),
		\end{cases}$$
		where
		$$p_1(N):=\begin{cases}
			\dfrac{N}{N-2}, & \text{if}\,\,\,\,  3 \leq N \leq 5,\\[10pt]
			\dfrac{2+\sqrt{2N}}{N-2}, & \text{if}\,\,\,\,  N > 5
		\end{cases}\quad
		\text{and}
		\quad
		p_2(N):=\begin{cases}
			\dfrac{N}{N-2}, & \text{if}\,\,\,\,  3 \leq N \leq 5,\\[10pt]
			\dfrac{N-2}{2}, & \text{if}\,\,\,\,  N > 5.
		\end{cases}$$
	\end{enumerate} 
	
	Our first main result in the superlinear case reads as follows.
	\begin{theorem}\label{theo2}
		Suppose that ($f_1$)-($f_5$) hold and 
		$$\frac{1}{r}<s<q.$$
		Then, \eqref{prob}-\eqref{navi} and \eqref{prob}-\eqref{Diri} have a weak solution.
		
		If in addition,
		\begin{equation}\label{mulambda}
			\mu<(s+1)(2|\Omega|)^{\frac{r-p}{r(p+1)}}\frac{C_r}{C_{g,s}},
		\end{equation}
		then the same result is valid for $s=\frac{1}{r}$.
	\end{theorem}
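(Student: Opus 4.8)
The plan is to apply the Mountain Pass Theorem of Ambrosetti--Rabinowitz, with the compactness supplied by Proposition~\ref{propcompacidade}; the delicate point will be to place the minimax level strictly below the compactness threshold $\frac{2}{N}S^{\frac{pN}{2(p+1)}}$. First, the mountain pass geometry. We have $I_F\in C^1(E_p)$ and $I_F(0)=0$, and by Lemma~\ref{prop-fun-h}, $I_F(u)\ge h(\|u\|)$ with $h$ as in \eqref{hh}; for $\|u\|$ small the relevant branch of $h$ is $(2|\Omega|)^{\frac{r-p}{r(p+1)}}C_r\,t^{\frac{r+1}{r}}-\frac{C_{g,s}\mu}{s+1}\,t^{s+1}-\frac{S^{-1}}{q+1}\,t^{q+1}$. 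When $\frac1r<s<q$ we have $\frac{r+1}{r}<s+1<q+1$, so the leading term $t^{\frac{r+1}{r}}$ dominates near $0$ with positive coefficient and $h(t)>0$ for small $t>0$; when $s=\frac1r$ the exponents $\frac{r+1}{r}$ and $s+1$ coincide and \eqref{mulambda} is precisely the inequality making $(2|\Omega|)^{\frac{r-p}{r(p+1)}}C_r-\frac{C_{g,s}\mu}{s+1}>0$, so again $h(t)>0$ for small $t>0$. Hence there are $\rho,\alpha>0$ with $I_F\ge\alpha$ on $\{\|u\|=\rho\}$. For the ``far'' point, fix $v\in E_p\setminus\{0\}$; using $(f_1)$ to get $F(x,\Delta u)\le\frac{p}{p+1}|\Delta u|^{\frac{p+1}{p}}$, we obtain for $t>0$
\begin{equation*}
I_F(tv)\le\frac{p}{p+1}\,t^{\frac{p+1}{p}}\|v\|^{\frac{p+1}{p}}-\frac{\mu}{s+1}\,t^{s+1}\int_\Omega g|v|^{s+1}\,dx-\frac{t^{q+1}}{q+1}|v|_{q+1}^{q+1}.
\end{equation*}
Since $(p,q)$ lies on the hyperbola \eqref{pqHC}, an elementary computation gives $pq>1$, whence $q+1>\frac{p+1}{p}$; together with $q+1>s+1$ this forces $I_F(tv)\to-\infty$ as $t\to+\infty$, so we may take $e:=t_\ast v$ with $t_\ast$ large, $\|e\|>\rho$ and $I_F(e)<0$.

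Next, the minimax level. Let $\Gamma:=\{\gamma\in C([0,1],E_p):\gamma(0)=0,\ \gamma(1)=e\}$ and $c^\ast:=\inf_{\gamma\in\Gamma}\max_{t\in[0,1]}I_F(\gamma(t))\ge\alpha>0$. The Mountain Pass Theorem furnishes a Palais--Smale sequence for $I_F$ at level $c^\ast$, and by Proposition~\ref{propcompacidade} such a sequence is relatively compact once
\begin{equation*}
c^\ast<\frac{2}{N}\,S^{\frac{pN}{2(p+1)}}.
\end{equation*}
To prove this it suffices to exhibit a test function $w_\varepsilon$ with $\sup_{t\ge0}I_F(tw_\varepsilon)<\frac{2}{N}S^{\frac{pN}{2(p+1)}}$ for $\varepsilon$ small, since the ray $t\mapsto tw_\varepsilon$ reaches $\{I_F<0\}$ (again because $pq>1$) and hence defines an admissible path. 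We take $w_\varepsilon$ to be a cut-off, concentrated at an interior point of $\Omega$, of the Aubin--Talenti type extremal realizing the best constant $S=\Lambda_q$ for the $\tfrac{p+1}{p}$-biharmonic operator under \eqref{navi} (resp. \eqref{Diri}). Bounding $\int_\Omega F(x,\Delta w_\varepsilon)$ by $(f_1)$ and maximizing the scalar map $t\mapsto\frac{p}{p+1}t^{\frac{p+1}{p}}\|w_\varepsilon\|^{\frac{p+1}{p}}-\frac{\mu}{s+1}t^{s+1}\int_\Omega g|w_\varepsilon|^{s+1}-\frac{t^{q+1}}{q+1}|w_\varepsilon|_{q+1}^{q+1}$, the purely critical part of the maximum tends, by the classical Aubin--Talenti computation, to $\frac{2}{N}S^{\frac{pN}{2(p+1)}}$ as $\varepsilon\to0$. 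It then remains to show that the strictly negative contribution of the perturbation $-\frac{\mu}{s+1}\int_\Omega g|w_\varepsilon|^{s+1}$, which decays like a power $\varepsilon^{a}$ with $a=a(N,p,q,s)$, dominates the positive cut-off/boundary error, which decays like a power $\varepsilon^{b}$ with $b=b(N,p,q)$. This is exactly where $(f_5)$ enters: the two lower bounds on $s+1$ in the ranges $\frac{2}{N-2}<p<p_1(N)$ and $p\ge p_2(N)$ are the translations of $a<b$, the case split reflecting whether the interior cut-off error or the boundary layer is dominant (and whether the extremal profile belongs to $L^{s+1}$), which also accounts for the precise form of $p_1(N),p_2(N)$.

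Finally, the conclusion. With $c^\ast$ strictly below the compactness threshold, any Palais--Smale sequence at level $c^\ast$ has a subsequence converging in $E_p$ to some $u\in E_p$, necessarily a critical point of $I_F$ with $I_F(u)=c^\ast>0=I_F(0)$. Hence $u\ne0$, and by the weak formulation attached to \eqref{functional}, $u$ is a nontrivial weak solution of \eqref{prob}-\eqref{navi} (resp. \eqref{prob}-\eqref{Diri}). The borderline case $s=\frac1r$ is handled identically, the only place where \eqref{mulambda} is needed being the mountain pass geometry at the origin. The main obstacle is the strict inequality $c^\ast<\frac{2}{N}S^{\frac{pN}{2(p+1)}}$: it requires sharp $\varepsilon\to0$ asymptotics of $\|w_\varepsilon\|^{\frac{p+1}{p}}$, $|w_\varepsilon|_{q+1}^{q+1}$ and $\int_\Omega g|w_\varepsilon|^{s+1}$ for the truncated extremals, together with careful bookkeeping of the error exponents — the intricate form of $p_1(N),p_2(N)$ in $(f_5)$ being a symptom of this balancing.
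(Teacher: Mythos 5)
Your plan follows the paper's route: mountain pass geometry from Lemma~\ref{prop-fun-h} and $(f_1)$ (Proposition~\ref{prop-mpgeometry} in the paper), compactness below $\tfrac{2}{N}S^{pN/(2(p+1))}$ from Proposition~\ref{propcompacidade}, then the level estimate. The one genuine difference is how the level estimate is obtained. The paper uses $(f_1)$ to bound $I_F$ pointwise by the ``pure'' functional $\Phi(u)=\int_\Omega|\Delta u|^{(p+1)/p}\,dx-\tfrac{1}{q+1}|u|_{q+1}^{q+1}-\tfrac{\mu}{s+1}|u|_{s+1}^{s+1}$ and then simply invokes \cite[Lemma~2.6]{EdJe-djairo}, which already proves that the mountain pass level of $\Phi$ lies in $(0,\tfrac{2}{N}S^{pN/(2(p+1))})$ under exactly $(f_5)$; since the minimax level is monotone under pointwise domination and the admissible paths coincide, the same bound transfers to $I_F$. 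You instead propose to rerun the Brezis--Nirenberg concentration argument from scratch with cut-off extremals. That is the content of the cited lemma and is consistent with the paper, but you should be aware that (a) the extremal here is not the explicit Aubin--Talenti bubble but the radial profile of the $(p+1)/p$-biharmonic critical equation from \cite{HulshofVan96}, whose decay and near-boundary contributions drive the case split in $(f_5)$, and (b) your description of ``$a<b$'' and ``the perturbation dominates the cut-off error'' is a narrative of the computation, not the computation itself, so at the level of rigor your sketch has the same amount of deferred work as the paper's citation, just stated less economically. The geometry (including the treatment of the borderline $s=1/r$ via \eqref{mulambda}) and the conclusion are correct and match the paper.
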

	
	Under some additional assumptions on $f$, the restrictions in $(f_5)$ can be relaxed. For the next result, we consider:
	\begin{enumerate}
		\item[$(f_6)$]{\it Upper bound for primitive:} There exist $c_{p,r}>0$ and $t_0>0$ such that 
		$$F(x,t)\leq \frac{p}{p+1}f(x,t)t\leq \frac{p}{p+1}\left[t^{\frac{p+1}{p}}-c_{p,r}t^{\frac{r+1}{p}}\right],\quad t\geq t_0, \,x\in \Omega.$$
		\item[$(f_7)$] {\it Additional dimensional relations II:} We assume that $p>\dfrac{2}{N-2}$ for every $N\geq 3$ and 
		$$s+1>\dfrac{(p-1)(q+1)}{p} \,\,\,\text{for}\,\,\, p>\begin{cases}
			\dfrac{7}{2}, & \text{if}\,\,\,\,  N=3\\[10pt]
			\dfrac{N+2}{N-2}, & \text{if}\,\,\,\,  3<N\leq 6.
		\end{cases}$$
	\end{enumerate}

	\begin{theorem}\label{theo3}
		Under assumptions ($f_1$)-($f_4$), ($f_6$) and ($f_7$), the conclusion of the Theorem \ref{theo2} is valid.
	\end{theorem}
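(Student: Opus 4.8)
The plan is to reproduce the Mountain Pass argument used for Theorem~\ref{theo2}, changing only the step that places the minimax level strictly below the compactness threshold $\frac{2}{N}S^{\frac{pN}{2(p+1)}}$ furnished by Proposition~\ref{propcompacidade}; it is precisely there that the pair $(f_6)$–$(f_7)$ takes the place of $(f_5)$. Throughout we work in the superlinear regime \eqref{rssuperlinear}.

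\emph{Step 1: mountain pass geometry.} First I would check that $I_F$ has the geometry of the Mountain Pass Theorem. Since $I_F(0)=0$, Lemma~\ref{prop-fun-h} gives $I_F(u)\ge h(\|u\|)$, and near $0$ the leading positive term of $h$ in \eqref{hh} is $(2|\Omega|)^{\frac{r-p}{r(p+1)}}C_r\,t^{\frac{r+1}{r}}$; because $\frac{r+1}{r}\le s+1$, with equality only if $s=\frac1r$, and $\frac{r+1}{r}<q+1$, the function $h$ has a strictly positive maximum — for every $\mu>0$ when $s>\frac1r$, and under condition \eqref{mulambda} when $s=\frac1r$. Hence there are $\rho,\alpha>0$ with $I_F(u)\ge\alpha$ for $\|u\|=\rho$. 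Moreover, fixing any $w\in E_p\setminus\{0\}$, assumption $(f_1)$ yields
\[
I_F(tw)\ \le\ \frac{p}{p+1}t^{\frac{p+1}{p}}\|w\|^{\frac{p+1}{p}}-\frac{t^{q+1}}{q+1}|w|_{q+1}^{q+1}-\frac{\mu\,t^{s+1}}{s+1}\int_\Omega g|w|^{s+1}dx,
\]
and since $q+1>\frac{p+1}{p}$ (indeed $pq>1$ on the hyperbola \eqref{pqHC}) and $q+1>s+1$, the right-hand side tends to $-\infty$ as $t\to+\infty$.

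\emph{Step 2: location of the minimax level (the main obstacle).} Let $U_\varepsilon$ be the family of truncated Talenti-type extremal functions for the embedding $E_p\hookrightarrow L^{q+1}$ concentrating at an interior point of $\Omega$, as in \cite{ederson, B-N}, so that these functions asymptotically realize the best constant $S=\Lambda_q$. I would estimate $\sup_{t\ge0}I_F(tU_\varepsilon)$. On the set $\{|t\Delta U_\varepsilon|\le t_0\}$ the integrand is bounded by $(f_1)$, while on its complement assumption $(f_6)$ gives
\[
\int_\Omega F(x,\Delta(tU_\varepsilon))\,dx\ \le\ \frac{p}{p+1}\Big(t^{\frac{p+1}{p}}\|U_\varepsilon\|^{\frac{p+1}{p}}-c_{p,r}\,t^{\frac{r+1}{p}}A_\varepsilon\Big)+O(1),\qquad A_\varepsilon:=\int_\Omega|\Delta U_\varepsilon|^{\frac{r+1}{p}}dx,
\]
so that, writing $B_\varepsilon:=\int_\Omega g|U_\varepsilon|^{s+1}dx$,
\[
I_F(tU_\varepsilon)\ \le\ \frac{p}{p+1}t^{\frac{p+1}{p}}\|U_\varepsilon\|^{\frac{p+1}{p}}-\frac{p\,c_{p,r}}{p+1}t^{\frac{r+1}{p}}A_\varepsilon-\frac{\mu}{s+1}t^{s+1}B_\varepsilon-\frac{t^{q+1}}{q+1}|U_\varepsilon|_{q+1}^{q+1}+O(1).
\]
Maximizing the two leading terms over $t$ yields a quantity converging to $\frac{2}{N}S^{\frac{pN}{2(p+1)}}$ as $\varepsilon\to0$, while the two subtracted corrections are strictly negative. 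The heart of the argument is then to plug in the standard $\varepsilon$-expansions of $\|U_\varepsilon\|^{\frac{p+1}{p}}$, $|U_\varepsilon|_{q+1}^{q+1}$, $A_\varepsilon$ and $B_\varepsilon$ and to verify that, exactly under the dimensional restrictions of $(f_7)$ (recall $p>\frac{2}{N-2}$ always holds on \eqref{pqHC}), the gain in order coming from $-A_\varepsilon$ and $-B_\varepsilon$ dominates the remainders in the expansions of $\|U_\varepsilon\|^{\frac{p+1}{p}}$ and $|U_\varepsilon|_{q+1}^{q+1}$. This gives $\sup_{t\ge0}I_F(tU_\varepsilon)<\frac{2}{N}S^{\frac{pN}{2(p+1)}}$ for $\varepsilon$ small; taking $e:=TU_\varepsilon$ with $T$ large and joining $0$ to $e$ by the ray $t\mapsto tU_\varepsilon$, we conclude that the minimax level $c:=\inf_{\gamma\in\Gamma}\max_{t\in[0,1]}I_F(\gamma(t))$ (where $\Gamma$ denotes the continuous paths in $E_p$ joining $0$ to $e$) satisfies $0<c<\frac{2}{N}S^{\frac{pN}{2(p+1)}}$. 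I expect this order-counting to be the only delicate point — the extra negative term $-c_{p,r}t^{\frac{r+1}{p}}$ provided by $(f_6)$ is precisely what lets the weaker hypotheses of $(f_7)$ take the place of $(f_5)$.

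\emph{Step 3: conclusion.} Since $0<c<\frac{2}{N}S^{\frac{pN}{2(p+1)}}$, Proposition~\ref{propcompacidade} ensures that $I_F$ satisfies the $(PS)_c$ condition, so the Mountain Pass Theorem produces $u\in E_p$ with $I_F'(u)=0$ and $I_F(u)=c>0=I_F(0)$; thus $u\not\equiv0$ is a nontrivial weak solution of \eqref{prob}-\eqref{navi} or \eqref{prob}-\eqref{Diri}. The borderline case $s=\frac1r$ is treated identically once \eqref{mulambda} is imposed, which is exactly what is required for the geometry in Step 1. This establishes the conclusion of Theorem~\ref{theo2} under $(f_1)$–$(f_4)$, $(f_6)$ and $(f_7)$.
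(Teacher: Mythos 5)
Your overall plan — Mountain Pass geometry from Proposition \ref{prop-mpgeometry}, compactness from Proposition \ref{propcompacidade}, and a Brezis--Nirenberg-type test-function estimate to place the level below $\tfrac{2}{N}S^{\frac{pN}{2(p+1)}}$ — is the same as the paper's, and you correctly locate the roles of $(f_6)$ (the extra negative correction) and $(f_7)$ (the dimensional rate restrictions). But the key Step~2 contains a genuine flaw as written. You replace $\int_{\{|t\Delta U_\varepsilon|>t_0\}}|\Delta U_\varepsilon|^{\frac{r+1}{p}}dx$ (the only region on which $(f_6)$ applies) by $A_\varepsilon=\int_\Omega|\Delta U_\varepsilon|^{\frac{r+1}{p}}dx$ at the cost of a ``$+O(1)$'' that you never resolve. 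Both $A_\varepsilon$ and $B_\varepsilon$ vanish as the bubble concentrates (with $\varepsilon$-rates that depend on $p,r,N$, and in some regimes only with logarithmic slowness), so a genuinely fixed $O(1)$ cannot be absorbed by these vanishing corrections and the upper bound becomes vacuous. Moreover, since $\tfrac{r+1}{p}<\tfrac{p+1}{p}$, a large share of $A_\varepsilon$ is carried by the far field where $|t\Delta U_\varepsilon|\le t_0$, so passing from the truncated integral to $A_\varepsilon$ is itself not harmless. Finally, the ``heart of the argument'' — verifying under $(f_7)$ that the negative gain dominates the remainders — is stated as a claim, yet this is precisely the entire technical content: the paper packages it into Lemma \ref{estimateFtv}, which gives the case-by-case bound $\int_\Omega f(x,t\Delta V_{\delta,a})\,t\Delta V_{\delta,a}\,dx<f_*(t)$ across the regimes $p\le p_*(N)$, $p_*(N)<p\le \frac{N^2+2N-4}{N^2-4N+4}$, and beyond (with subcases in $r$), built on \cite[Lemma 4.9]{guimaraes2023hamiltonian}.

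There is also a structural difference worth noting. Rather than expanding $\sup_{t\ge0}I_F(tU_\varepsilon)$ directly, the paper normalizes $V_{\delta,a}=U_{\delta,a}/|U_{\delta,a}|_{q+1}$, shows the maximizer $t_\delta$ of $t\mapsto I_F(tV_{\delta,a})$ stays in a fixed interval $[b,\overline b]$ (Lemma \ref{tdeltaltd}), uses $(f_6)$ together with the Euler identity \eqref{I'tv} to reduce to $\max_{t\ge0}I_F(tV_{\delta,a})\le\tfrac{2}{N}\int_\Omega f(x,t_\delta\Delta V_{\delta,a})\,t_\delta\Delta V_{\delta,a}\,dx$, and then feeds Lemma \ref{estimateFtv} back into \eqref{I'tv} to force $t_\delta<S^{\frac{p}{pq-1}}$, which immediately gives $c<\tfrac{2}{N}S^{\frac{pN}{2(p+1)}}$. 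This parametrization through $t_\delta$ is tighter than a direct expansion of $\sup_tI_F(tU_\varepsilon)$ and avoids exactly the bookkeeping that your $O(1)$ papers over; to complete your route you would need to replace $A_\varepsilon$ by the correct truncated integral and then actually perform the rate comparison that Lemma \ref{estimateFtv} encodes.
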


	To prove Theorems \ref{theo2} and \ref{theo3}, we appeal to the classical Mountain Pass Theorem \cite{Ambrosetti-Rabinowitz}.
	
	\begin{theorem}[Mountain Pass]\label{MP}
		Let $X$ be a Banach space and $I \in C^{1}\left(X, \mathbb{R}\right)$. If
		\begin{itemize}
			\item  there exists $v \in X$ and $d>0$ such that $\|v\|>d$ and
			\begin{equation}\tag{Geometric Condition}
				\inf _{\|u\|=d} I(u)>I(0) \geq I(v); 
			\end{equation}
			\item $I$ satisfies the $(PS)_{c}$ condition at the level
			\begin{equation}\tag{Mountain Pass Level}
				c:=\inf _{\gamma \in \Gamma} \max _{t \in[0,1]} I(\gamma(t)),   
			\end{equation}
			where
			$$\Gamma:=\{\gamma \in C([0,1], X) ; \gamma(0)=0, I(\gamma(1))<0\},$$
		\end{itemize}
		then $c$ is a critical value of $I$.
	\end{theorem}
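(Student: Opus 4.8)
The plan is to show that the minimax value $c$ is a critical value by the classical contradiction-plus-deformation scheme; throughout, write $I^{a}:=\{u\in X:\ I(u)\le a\}$ and $K_{c}:=\{u\in X:\ I(u)=c,\ I'(u)=0\}$, the goal being $K_{c}\neq\emptyset$. \emph{Step 1: the geometric condition forces $c>I(0)$.} Assuming, as the geometric condition secures in the situations of interest, that $\Gamma\neq\emptyset$, the number $c$ is well defined. For any $\gamma\in\Gamma$ the map $t\mapsto\|\gamma(t)\|$ is continuous, vanishes at $t=0$, and its endpoint forces the path to meet the sphere $\{\|u\|=d\}$ (this is exactly the role of the point $v$ with $\|v\|>d$ appearing in the hypothesis); hence there is $t_{\gamma}$ with $\|\gamma(t_{\gamma})\|=d$, and therefore $\max_{t}I(\gamma(t))\ge I(\gamma(t_{\gamma}))\ge\inf_{\|u\|=d}I(u)$. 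Passing to the infimum over $\gamma\in\Gamma$ gives $c\ge\inf_{\|u\|=d}I(u)>I(0)$; in particular $c$ is finite and $c>I(0)\ge I(v)$.

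\emph{Step 2: quantitative $(PS)_{c}$ and a deformation.} Suppose, for contradiction, that $K_{c}=\emptyset$. I first claim there are $\varepsilon_{0}\in\big(0,\,c-I(0)\big)$ and $\rho>0$ with $\|I'(u)\|_{X^{*}}\ge\rho$ whenever $|I(u)-c|\le\varepsilon_{0}$. Indeed, were this false, for each $n$ one could pick $u_{n}$ with $|I(u_{n})-c|\le 1/n$ and $\|I'(u_{n})\|<1/n$; then $(u_{n})$ is a Palais--Smale sequence for $I$ at level $c$ in the sense of Definition \ref{PSc}, so by the $(PS)_{c}$ hypothesis a subsequence converges to some $u$, and continuity of $I$ and $I'$ yields $u\in K_{c}$, a contradiction. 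With $\varepsilon_{0},\rho$ fixed, the standard quantitative deformation lemma (see \cite{Ambrosetti-Rabinowitz, R2}) furnishes $\varepsilon\in(0,\varepsilon_{0})$ and a continuous map $\eta:X\to X$ such that
\begin{itemize}
\item[(a)] $\eta(u)=u$ whenever $I(u)\notin[c-\varepsilon_{0},c+\varepsilon_{0}]$, and $I(\eta(u))\le I(u)$ for every $u\in X$;
\item[(b)] $\eta\big(I^{c+\varepsilon}\big)\subset I^{c-\varepsilon}$.
\end{itemize}

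\emph{Step 3: contradiction with the definition of $c$.} By definition of the infimum, choose $\gamma\in\Gamma$ with $\max_{t\in[0,1]}I(\gamma(t))\le c+\varepsilon$, and put $\widetilde\gamma:=\eta\circ\gamma\in C([0,1],X)$. Since $I(\gamma(0))=I(0)<c-\varepsilon_{0}$, property (a) gives $\widetilde\gamma(0)=\eta(0)=0$; since $I(\eta(\gamma(1)))\le I(\gamma(1))<0$ by the monotonicity part of (a), we have $I(\widetilde\gamma(1))<0$, so $\widetilde\gamma\in\Gamma$. On the other hand $\gamma(t)\in I^{c+\varepsilon}$ for every $t$, so (b) yields $I(\widetilde\gamma(t))\le c-\varepsilon$ for all $t\in[0,1]$, whence $c\le\max_{t}I(\widetilde\gamma(t))\le c-\varepsilon<c$, which is absurd. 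Therefore $K_{c}\neq\emptyset$, i.e.\ $c$ is a critical value of $I$.

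\emph{Main obstacle.} The only step that genuinely costs something is the quantitative deformation lemma invoked in Step 2: since $X$ is a general Banach space, $I'(u)\in X^{*}$ is not represented by a vector of $X$, so one cannot integrate $-I'$ directly. The remedy is a pseudo-gradient vector field, i.e.\ a locally Lipschitz map $V:\{I'\neq 0\}\to X$ with $\|V(u)\|\le 2\|I'(u)\|$ and $\langle I'(u),V(u)\rangle\ge\|I'(u)\|^{2}$, whose existence follows from a partition-of-unity argument; integrating a suitable cut-off $-\chi\,V/\|V\|^{2}$, with $\chi$ supported near the strip $\{|I-c|\le\varepsilon_{0}\}$, produces the flow $\eta$, and the a priori lower bound $\|I'\|\ge\rho$ on that strip is exactly what makes the flow push $I^{c+\varepsilon}$ below level $c-\varepsilon$ in finite time. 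A flow-free alternative is Ekeland's variational principle applied to $I$ on $\Gamma$ equipped with the sup-metric, which directly produces an almost-critical sequence at level $c$; either way the $(PS)_{c}$ hypothesis closes the argument. Since this apparatus is entirely classical, in the paper it is simply invoked as \cite{Ambrosetti-Rabinowitz}.
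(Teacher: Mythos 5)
The paper offers no proof of Theorem \ref{MP}: it is the classical Ambrosetti--Rabinowitz Mountain Pass Theorem, imported from \cite{Ambrosetti-Rabinowitz} and used as a black box, so there is nothing internal to compare your argument with. Your proof is the standard one --- derive a uniform bound $\|I'(u)\|\ge\rho$ on the strip $\{|I(u)-c|\le\varepsilon_0\}$ from $(PS)_c$ by contradiction, apply the quantitative deformation lemma built from a pseudo-gradient flow, and obtain the minimax contradiction $c\le c-\varepsilon$ --- and it is correct; you also rightly identify the pseudo-gradient construction as the only nontrivial ingredient in a general Banach space. One caveat, which is an imprecision of the statement as printed rather than a flaw in your reasoning: since $\Gamma$ is defined only by $\gamma(0)=0$ and $I(\gamma(1))<0$ (the endpoint is not required to be $v$, nor to satisfy $\|\gamma(1)\|>d$), a path in $\Gamma$ need not meet the sphere $\{\|u\|=d\}$, so the inequality $c\ge\inf_{\|u\|=d}I(u)$ asserted in your Step 1 does not follow literally from the hypotheses; likewise $\Gamma\neq\emptyset$ is not literally guaranteed by $I(0)\ge I(v)$. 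Both points are repaired by reading the theorem in its standard normalization (paths joining $0$ to $v$, with $I(0)=0<\inf_{\|u\|=d}I$ and $I(v)<0$, exactly the situation produced by Proposition \ref{prop-mpgeometry}), and your deformation step uses only $c>I(0)$ and $\Gamma\neq\emptyset$, which you flag explicitly, so the argument stands.
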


	\subsection{The Geometric Condition}
	
	\begin{prop}\label{prop-mpgeometry}
		Suppose that ($f_1$)-($f_4$) hold and 
		$$\frac{1}{r}<s<q.$$
		Then, $I_F$ satisfies the Geometric Condition of the Mountain Pass Theorem. If in addition \eqref{mulambda} holds, the Geometric Condition is also valid for $s=1/r$.
	\end{prop}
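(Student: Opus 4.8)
The plan is to verify the two halves of the Geometric Condition separately. For the first half, I would use Lemma \ref{prop-fun-h}: since $I_F(u)\ge h(\|u\|)$, it suffices to show that $h$ is strictly positive on some sphere $\|u\|=d$ with $d>0$ small. Looking at the first branch of \eqref{hh}, for small $t$ the dominant term is $(2|\Omega|)^{(r-p)/(r(p+1))}C_r\,t^{(r+1)/r}$, and since $s+1>(r+1)/r$ (this is exactly $s>1/r$) and $q+1>(r+1)/r$, both the $\mu\,t^{s+1}$ and the $t^{q+1}$ terms are of higher order in $t$. Hence $h(t)>0$ for all sufficiently small $t>0$, so picking any such $t=d$ gives $\inf_{\|u\|=d}I_F(u)\ge h(d)>0=I_F(0)$. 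For the borderline case $s=1/r$, the exponents $s+1$ and $(r+1)/r$ coincide, so the leading behaviour of $h$ near zero is $\big[(2|\Omega|)^{(r-p)/(r(p+1))}C_r-\tfrac{C_{g,s}\mu}{s+1}\big]t^{(r+1)/r}+o(t^{(r+1)/r})$, and this bracket is positive precisely under hypothesis \eqref{mulambda}; that is the reason the extra smallness assumption on $\mu$ is imposed there.

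For the second half I need $v\in E_p$ with $\|v\|>d$ and $I_F(v)\le 0$. I would fix any $w\in E_p\setminus\{0\}$ and examine $t\mapsto I_F(tw)$ as $t\to\infty$. Using $(f_1)$ we have $F(x,s)\le \frac{p}{p+1}|s|^{(p+1)/p}$ (integrating the bound $f(x,s)\le s^{1/p}$ for the odd, non-decreasing $f$), so
\[
I_F(tw)\le \frac{p}{p+1}t^{\frac{p+1}{p}}\|w\|^{\frac{p+1}{p}}-\frac{\mu}{s+1}t^{s+1}\!\int_\Omega g|w|^{s+1}dx-\frac{1}{q+1}t^{q+1}|w|_{q+1}^{q+1}.
\]
Since $q+1>(p+1)/p$ (equivalently $q>1/p$, which follows from the critical hyperbola \eqref{pqHC} together with $p,q>0$ — indeed $\frac1{p+1}+\frac1{q+1}=\frac{N-2}{N}<1$ forces $q>1/p$), the last term dominates and $I_F(tw)\to-\infty$ as $t\to\infty$. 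Choosing $t$ large enough that simultaneously $I_F(tw)\le 0$ and $\|tw\|=t\|w\|>d$, and setting $v:=tw$, completes the verification.

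The main obstacle is purely bookkeeping rather than conceptual: one must be careful that the exponent inequalities needed ($s+1>(r+1)/r$ for the mountain around zero, and $q+1>(p+1)/p$ for the descent at infinity) genuinely follow from the standing hypotheses \eqref{rssuperlinear}, $0<r<p$ and the critical-hyperbola relation \eqref{pqHC}, and that in the limiting case $s=1/r$ the constant comparison in \eqref{mulambda} is exactly what keeps $h$ positive near the origin. No compactness or delicate estimate is involved here; the $(PS)_c$ part of the Mountain Pass Theorem is handled separately by Proposition \ref{propcompacidade}.
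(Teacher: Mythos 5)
Your proposal is correct and follows essentially the same route as the paper: lower-bound $I_F$ near the origin via Lemma \ref{prop-fun-h} and compare the exponents $\tfrac{r+1}{r}$, $s+1$, $q+1$ (with \eqref{mulambda} absorbing the $s=1/r$ borderline), and use $(f_1)$ together with $q+1>\tfrac{p+1}{p}$ for the descent at infinity. The paper phrases the small-sphere step by working on the ball $\|u\|\le \tilde d = (C_p/C_r)^{pr/(p-r)}|\Omega|^{p/(p+1)}$ so that the first branch of $h$ applies, but this is the same computation you carry out.
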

	
	\begin{proof}
		Observe that $I_F(0) = 0$ and, by $(f_1)$,  
		$$
		I_F(u)\leq \frac{p}{p+1} \Vert u\Vert ^{\frac{p+1}{p}}
		-\frac{\mu}{s+1} |u|_{s+1}^{s+1}
		-\frac{1}{q+1} |u|_{q+1}^{q+1},
		\quad \forall \, \,u \in E_p.
		$$
		Noting that $q+1>\frac{p+1}{p}$, we have $I_F(tu)\rightarrow -\infty$ as $t\rightarrow \infty$ for every $u\in E_p\backslash\{0\}$. Then, fixing $u_0\in E_p\backslash\{0\}$, there exists $t_0>0$ such that $I_F(v)<0$, where we have set $v:=t_0 u_0$.  
		
		Now, we set
		$$\tilde{d}:=\left(\frac{C_p}{C_r}\right)^\frac{pr}{p-r}|\Omega|^{\frac{p}{p+1}}>0$$
		and take $u \in E_p$ such that $\|u\|\leq \tilde{d}$. From Lemma \ref{prop-fun-h}, we deduce
		\begin{equation*}
			I_F(u)\geq (2|\Omega|)^{\frac{r-p}{r(p+1)}}C_r\|u\|^{\frac{r+1}{r}}
			-\frac{C_{g,s}\mu}{s+1} \|u\|^{s+1}
			-\frac{S^{-1}}{q+1}\|u\|^{q+1}.        
		\end{equation*}
		When $\tfrac{1}{r}<s<q$, there exists $d\leq \tilde{d}$ such that $\displaystyle\inf_{\|u\|=d}I_F(u)>0$.
		
		On the other hand, assuming that $\tfrac{1}{r}=s<q$, then 
		\begin{equation*}
			I_F(u)\geq \left((2|\Omega|)^{\frac{r-p}{r(p+1)}}C_r-\frac{C_{g,s}\mu}{s+1} \right)\|u\|^{\frac{r+1}{r}}
			-\frac{S^{-1}}{q+1}\|u\|^{q+1}.        
		\end{equation*}
		By using the fact that \eqref{mulambda}, we arrive at $\displaystyle\inf_{\|u\|=d}I_F(u)>0$ for some $d \leq \tilde{d}$.  
		
		The proof is complete.
	\end{proof}

	\subsection{Upper bound for the Mountain Pass Level}\label{sectionMP}
	
	Assume that the assumptions of Theorem \ref{theo2} hold. Observing that 
	$$I_F(u)\leq \Phi(u):= \int_\Omega |u|^\frac{p+1}{p}dx-\frac{1}{q+1}|u|^{q+1}_{q+1} - \frac{\mu}{s+1} |u|^{s+1}_{s+1}$$
	and invoking \cite[Lemma 2.6]{EdJe-djairo}, we conclude that the Mountain Pass Level $c$ of $I_F$ satisfies 
	\begin{equation}\label{level}
		c \in \left(0,\frac{2}{N} S^{\frac{pN}{2(p+1)}}\right).    
	\end{equation}
	
	In order to find an upper bound for the Mountain Pass Level under the assumptions of Theorem \ref{theo3}, we prepare some preliminary facts. Let $\varphi$ be a positive radial solution of the problem \cite{HulshofVan96}
	\begin{equation*}
		\Delta (|\Delta\varphi|^{\frac{1}{p}-1}\Delta\varphi) = |\varphi|^{q-1} \varphi \ \ \text{in} \ \ \mathbb{R}^N.
	\end{equation*}
	Given $a \in \Omega$, let $\xi_a \in C_c^{\infty} (\mathbb{R}^N)$ be a function such that 
	$$0 \leq \xi_a(x)\leq 1,  \,\,x \in \mathbb{R}^N, \qquad \xi_a \equiv 1\,\,\text{in}\,\, B(a, \rho/2), \qquad\xi_a \equiv 0 \,\,\text {in} \,\, \mathbb{R}^{N}\backslash B(a, \rho)$$
	where $\rho>0$ and $B(a, \rho):=\{x \in \Omega,\,\,|x-a|<\rho\}$.  We define
	$$U_{\delta,a}:=\delta^{\frac{-N}{q+1}}\xi_a(x)  \varphi\left(\frac{x-a}{\delta}\right)\,\, \text{ and }\,\, V_{\delta,a}:=|U_{\delta,a}|^{-1}_{q+1}U_{\delta,a}.$$
	From \eqref{rssuperlinear} and ($f_1$), we deduce
	$$
	\lim_{t\to\infty}I_F(tV_{\delta,a})=-\infty,
	$$
	which implies that $\displaystyle\max_{t\geq0}I(tV_{\delta,a})$ is attained at some $t_\delta>0$. Then,
	$$
	0=I_F'(t_\delta V_{\delta,a})=\displaystyle\int_\Omega f(x,t_\delta \Delta V_{\delta,a})\Delta V_{\delta,a}\, dx-t_\delta^s|V_{\delta,a}|^{s+1}_{s+1}-t_\delta^q,
	$$
	from where we infer that
	\begin{equation}\label{I'tv}
		t_\delta^{q+1}=\displaystyle\int_\Omega f(x,t_\delta \Delta V_{\delta,a})t_\delta\Delta V_{\delta,a}\, dx-t_\delta^{s+1}|V_{\delta,a}|^{s+1}_{s+1}.
	\end{equation}
	
	\begin{lemma}\label{tdeltaltd}
		There exist $0<b<\overline{b}<+\infty$ and $\delta_0 \in (0,1)$ such that $t_\delta\in [b,\overline{b}]$, for all $\delta \in (0,\delta_0)$. 
	\end{lemma}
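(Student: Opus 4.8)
The plan is to squeeze $t_\delta$ between two positive constants using only the Euler identity \eqref{I'tv}, the growth bound $(f_1)$, and the energy lower bound of Lemma \ref{prop-fun-h}. Write $w_\delta := V_{\delta,a}$, so that $|w_\delta|_{q+1} = 1$ and, by $E_p \hookrightarrow L^{q+1}$, $\|w_\delta\| \geq S$. The one nonelementary ingredient I would invoke is the uniform estimate $\|w_\delta\| \leq \overline{C}$ for all $\delta \in (0,\delta_0)$, for some $\overline{C}>0$ and $\delta_0 \in (0,1)$: indeed $\|w_\delta\| = \|U_{\delta,a}\| / |U_{\delta,a}|_{q+1}$, and the classical asymptotic analysis of the rescaled instantons (as in \cite{HulshofVan96, guimaraes2023hamiltonian, EdJe-djairo}) shows that both $\|U_{\delta,a}\|^{(p+1)/p}$ and $|U_{\delta,a}|_{q+1}^{q+1}$ tend to positive constants as $\delta \to 0$, so $\|w_\delta\|$ stays in a compact subinterval of $(0,\infty)$.

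For the upper bound, I would multiply the relation $\langle I_F'(t_\delta w_\delta), w_\delta \rangle = 0$ by $t_\delta$ (this is exactly \eqref{I'tv}), use $(f_1)$ together with the oddness of $f$ to bound $f(x,t)t \leq |t|^{(p+1)/p}$, and discard the nonnegative term $\mu t_\delta^{s+1}|w_\delta|_{s+1}^{s+1}$, obtaining
\[
t_\delta^{q+1} \;\leq\; \int_\Omega |t_\delta \Delta w_\delta|^{\frac{p+1}{p}}\,dx \;=\; t_\delta^{\frac{p+1}{p}}\|w_\delta\|^{\frac{p+1}{p}} \;\leq\; t_\delta^{\frac{p+1}{p}}\,\overline{C}^{\frac{p+1}{p}}.
\]
Since $(p,q)$ satisfies \eqref{pqHC} one has $q+1 > \frac{p+1}{p}$ (equivalently $pq>1$, immediate from \eqref{pqHC}), so this forces $t_\delta \leq \overline{b} := \overline{C}^{\frac{p+1}{p(q+1)-(p+1)}}$.

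For the lower bound, I would first observe that under the hypotheses of Theorem \ref{theo2} (and likewise Theorem \ref{theo3}) the function $h$ of \eqref{hh} is positive on a right neighbourhood of $0$: for $s>1/r$ the leading term has exponent $(r+1)/r < \min\{s+1,q+1\}$, while for $s = 1/r$ the positivity of the coefficient $(2|\Omega|)^{\frac{r-p}{r(p+1)}}C_r - \frac{C_{g,s}\mu}{s+1}$ is precisely condition \eqref{mulambda}. Fix $d_0>0$ with $\kappa := h(d_0)>0$. Because $t_\delta$ realizes $\max_{t\geq 0} I_F(tw_\delta)$, Lemma \ref{prop-fun-h} applied to $\tfrac{d_0}{\|w_\delta\|}w_\delta$ (which has norm $d_0$) gives
\[
\kappa \;=\; h(d_0) \;\leq\; I_F\!\Big(\tfrac{d_0}{\|w_\delta\|}\,w_\delta\Big) \;\leq\; I_F(t_\delta w_\delta).
\]
On the other hand, $(f_1)$ yields $F(x,t) \leq \frac{p}{p+1}|t|^{(p+1)/p}$, and dropping the two nonpositive terms in $I_F$ gives $I_F(t_\delta w_\delta) \leq \frac{p}{p+1}\,t_\delta^{\frac{p+1}{p}}\|w_\delta\|^{\frac{p+1}{p}} \leq \frac{p}{p+1}\,\overline{C}^{\frac{p+1}{p}}\,t_\delta^{\frac{p+1}{p}}$. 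Combining the last two estimates yields $t_\delta \geq b := \Big(\tfrac{(p+1)\kappa}{p\,\overline{C}^{(p+1)/p}}\Big)^{p/(p+1)} > 0$, and hence $t_\delta \in [b,\overline{b}]$ for every $\delta \in (0,\delta_0)$.

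The main obstacle is the uniform two-sided control of $\|V_{\delta,a}\|$ as $\delta \to 0$; granting that, the rest is a short computation from $(f_1)$ and Lemma \ref{prop-fun-h}. I would also take care that the estimates above use only the defining relations of $t_\delta$, so that they apply to an arbitrary maximizer (the discussion preceding the lemma asserts existence, not uniqueness, of $t_\delta$).
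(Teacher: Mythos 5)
Your proof is correct, and while the upper bound coincides with the paper's argument, your lower bound follows a genuinely different and arguably cleaner route. Here is the comparison.

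\textbf{Upper bound.} Both you and the paper start from \eqref{I'tv}, discard the nonnegative perturbation term, apply $(f_1)$ in the form $f(x,t)t\leq |t|^{(p+1)/p}$, and conclude $t_\delta^{q+1-\frac{p+1}{p}}\leq \|V_{\delta,a}\|^{(p+1)/p}$. The paper uses the sharper asymptotic $\|V_{\delta,a}\|^{(p+1)/p}\leq S+C_0\delta$ from \cite{EdJe-djairo}, while you are content with a crude uniform bound $\overline{C}$; both suffice for the claimed qualitative conclusion, though the paper's finer constant is also what feeds into Proposition \ref{prop-mplevel} later.

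\textbf{Lower bound.} This is where you diverge. The paper works directly on the Euler equation: it splits $\Omega$ into $P_\delta=\{|t_\delta\Delta V_{\delta,a}|<t_0\}$ and its complement, applies the two-sided lower bound $(f_3)$ on each piece, uses \eqref{I'tv} together with the decay $|V_{\delta,a}|_{s+1}^{s+1}=O(\delta)$ to control the right-hand side, and ends up with $C_1\leq C_2 t_\delta^{\,s-1/r}+C_3 t_\delta^{\,q-1/r}$, which forces $t_\delta$ away from zero because $s,q\geq 1/r$. You instead bypass $(f_3)$ and the $|V_{\delta,a}|_{s+1}$ decay entirely: you exploit the maximizing property of $t_\delta$ together with the energy lower bound of Lemma~\ref{prop-fun-h}, picking $d_0$ small enough that $h(d_0)>0$ (which is exactly the Mountain Pass geometry established under $s>1/r$, or $s=1/r$ with \eqref{mulambda}), and then sandwich $h(d_0)\leq I_F(t_\delta V_{\delta,a})\leq \frac{p}{p+1}t_\delta^{(p+1)/p}\|V_{\delta,a}\|^{(p+1)/p}$. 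This is shorter, does not require the finer blow-up asymptotics for $|V_{\delta,a}|_{s+1}$, and uses only the same hypotheses on $(r,s,\mu)$ that the theorems already impose. The trade-off is that your argument is anchored to Lemma~\ref{prop-fun-h} and the Mountain Pass geometry, whereas the paper's argument is self-contained within \eqref{I'tv} and $(f_3)$. Both routes share the single nonelementary ingredient — the uniform two-sided control of $\|V_{\delta,a}\|$ as $\delta\to 0$ — which you correctly flag and which the paper also draws from the instanton asymptotics in \cite{EdJe-djairo}.
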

	\begin{proof}
		Let $\delta \in (0,1)$. Using \eqref{I'tv} and applying the same strategy from \cite{EdJe-djairo}, we can find a constant $C_0>0$ such that
		$$
		t_\delta^{q} \leq t_\delta^\frac{1}{p} S+t_\delta^\frac{1}{p}C_0\delta \Longrightarrow t_\delta^\frac{pq-1}{p} \leq  S+C_0 \Longrightarrow t_\delta \leq  \left(S+C_0\right)^\frac{p}{pq-1}=:\overline{b}.
		$$
		
		On the other hand,  we set
		$$P_\delta:=\{x\in \Omega;|t_\delta \Delta V_{\delta,a}(x)|<t_0 \}, \qquad Q_\delta=\Omega\backslash P_\delta.$$ 
		For $\delta$ small enough, we have
		\begin{eqnarray*}
			C_1t_\delta^{1/r}  &\leq &  \displaystyle c_p t_\delta^{1/p} \int_{Q_\delta}  |\Delta V_{\delta,a} (x)|^\frac{p+1}{p}dx + c_r t_\delta^{1/r} \int_{P_\delta} |\Delta V_{\delta,a} (x)|^\frac{r+1}{r}dx\\
			&\leq &  \int_\Omega  f(x,t_\delta \Delta V_{\delta,a} (x))\Delta V_{\delta,a} (x)dx\\
			&\leq&C_2\delta t_\delta^s +C_3 t_\delta^q.  
		\end{eqnarray*}
		for some positive constants $C_1$, $C_2$ and $C_3$. Hence,
		\[C_1\leq C_2  t_\delta^{s-\frac{1}{r}}+C_3 t_\delta^{q-\frac{1}{r}},\]
		which implies in the existence of $b>0$ such that $t_{\delta}>b$ for $\delta$ sufficiently small.
	\end{proof}
	
	For the next result, we consider the number 
	$$p_*(N):=\begin{cases}
		\dfrac{7}{2}, & \text{if}\,\,\,\,  N = 3,\\[10pt]
		\dfrac{N+2}{N-2}, & \text{if}\,\,\,\,  N > 3
	\end{cases}$$
	and the function
	$$f_*(t):=\begin{cases}
		f_1(t), & \text{if}\,\,\,\,  p \leq  p_*(N),\\[10pt]
		f_2(t), & \text{if}\,\,\,\,   p_*(N)<p\leq \frac{N^2+2N-4}{N^2-4N+4},\\[10pt]
		f_3(t), & \text{if}\,\,\,\,  p> \frac{N^2+2N-4}{N^2-4N+4},
	\end{cases}$$
	where $f_1(t):=t^\frac{p+1}{p}S$,
	
	$$f_2(t):=\left\{ \begin{array}{lll}
		t^\frac{p+1}{p}S+c_1t^\frac{r+1}{r} \delta^{\frac{N(r+1)}{r(p+1)}}-c_2 t^\frac{r+1}{p}\delta^{\frac{N}{q+1}\frac{N}{N-2}}, &\text{if }\, r<\frac{2}{N-2},\\
		t^\frac{p+1}{p}S+c_1t^\frac{r+1}{r} \delta^{\frac{N(r+1)}{r(p+1)}}-c_2\delta^{\frac{N}{q+1}\frac{N}{N-2}}|\log(\delta)|, &\text{if }\, r=\frac{2}{N-2},\\
		t^\frac{p+1}{p}S+c_1t^\frac{r+1}{r} \delta^{\frac{N(r+1)}{r(p+1)}}-c_2 t^\frac{r+1}{p}\delta^{\frac{N(p-r)}{p+1}}, &\text{if }\, r>\frac{2}{N-2},
	\end{array}
	\right. $$
	and
	$$f_3(t):= \left\{ \begin{array}{lll}
		t^\frac{p+1}{p}S+c_1t^\frac{r+1}{r} \delta^{\frac{N(r+1)}{r(p+1)}}- c_2\lambda\delta^{\frac{Nq}{q+1}}|\log(\delta)|, & \! \text{if } r+1=\frac{p+1}{q+1},\\
		t^\frac{p+1}{p}S+c_1t^\frac{r+1}{r} \delta^{\frac{N(r+1)}{r(p+1)}}-c_2\lambda\delta^{\frac{Nq}{q+1}}, & \! \text{if } r+1<\frac{p+1}{q+1},\\
		t^\frac{p+1}{p}S+c_1t^\frac{r+1}{r} \delta^{\frac{N(r+1)}{r(p+1)}}-c_2\lambda\delta^{\frac{N(p-r)}{p+1}}, &  \! \text{if }  r+1>\frac{p+1}{q+1}.
	\end{array}
	\right.$$
	Here, $c_1$ and $c_2$ are positive constants that will be determined below.
	\begin{lemma}\label{estimateFtv}
		Under assumptions of Theorem \ref{theo3}, there exist $c_1$, $c_2>0$, $0<m<\overline{m}$, independent of $\delta$, and $\delta_0 \in (0,1)$ such that
		$$\int_\Omega f(x,t\Delta V_{\delta,a}(x))(t\Delta V_{\delta,a}(x))dx<f_*(t), $$
		for every $t\in[m,\overline{m}]$ and $\delta\in(0,\delta_0)$.
	\end{lemma}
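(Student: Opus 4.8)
\emph{Plan and Step 1 (reduction).} The proof will combine the pointwise bounds $(f_1)$ and $(f_6)$ with the sharp asymptotic expansions, as $\delta\to 0$, of the rescaled instanton $U_{\delta,a}$ (and of $V_{\delta,a}=|U_{\delta,a}|_{q+1}^{-1}U_{\delta,a}$), following the template of the analogous estimates in \cite{EdJe-djairo, guimaraes2023hamiltonian}. Fix $t$ in a compact interval $[m,\overline{m}]$ (to be chosen) and, for small $\delta$, split $\Omega=P_\delta^t\cup Q_\delta^t$, where $P_\delta^t:=\{x\in\Omega:|t\,\Delta V_{\delta,a}(x)|<t_0\}$, $Q_\delta^t:=\Omega\setminus P_\delta^t$, and $t_0$ is the constant from $(f_6)$. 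Since $f(x,\cdot)$ is odd, $f(x,t\Delta V_{\delta,a})(t\Delta V_{\delta,a})=f(x,|t\Delta V_{\delta,a}|)|t\Delta V_{\delta,a}|\geq 0$; on $P_\delta^t$ the bound $(f_1)$ gives $f(x,t\Delta V_{\delta,a})(t\Delta V_{\delta,a})\leq |t\Delta V_{\delta,a}|^{(p+1)/p}$, while on $Q_\delta^t$ the bound $(f_6)$ gives $f(x,t\Delta V_{\delta,a})(t\Delta V_{\delta,a})\leq |t\Delta V_{\delta,a}|^{(p+1)/p}-c_{p,r}|t\Delta V_{\delta,a}|^{(r+1)/p}$. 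Adding both pieces,
\begin{align*}
\int_\Omega f(x,t\Delta V_{\delta,a})(t\Delta V_{\delta,a})\,dx
&\leq t^{\frac{p+1}{p}}\int_\Omega|\Delta V_{\delta,a}|^{\frac{p+1}{p}}\,dx\\
&\quad - c_{p,r}\,t^{\frac{r+1}{p}}\int_{Q_\delta^t}|\Delta V_{\delta,a}|^{\frac{r+1}{p}}\,dx,
\end{align*}
so it suffices to estimate the first integral from above and the second from below.

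\emph{Step 2 (asymptotics of the building blocks).} From the scaling of $U_{\delta,a}$, the critical-hyperbola identity $\tfrac{1}{p+1}+\tfrac{1}{q+1}=\tfrac{N-2}{N}$, the equation satisfied by $\varphi$, and the standard cut-off estimates (see \cite{HulshofVan96, EdmundsFortunatoJannelli, EdJe-djairo, guimaraes2023hamiltonian}), one obtains $\int_\Omega|\Delta V_{\delta,a}|^{(p+1)/p}\,dx\leq S+C\delta^{\gamma}$ for some $\gamma>0$; multiplying by $t^{(p+1)/p}$ and using that $t\in[m,\overline{m}]$ together with $\tfrac{p+1}{p}<\tfrac{r+1}{r}$ (as $r<p$), the error $t^{(p+1)/p}C\delta^\gamma$ can be absorbed into a term $c_1\,t^{(r+1)/r}\delta^{N(r+1)/(r(p+1))}$, which is exactly the positive correction recorded in $f_2$ and $f_3$. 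For the subtracted integral, since $\Delta\varphi$ is bounded away from zero near the origin, for every $t\geq m$ the set $Q_\delta^t$ contains a ball $B(a,c\delta)$ (indeed one of a possibly larger, $\delta$-dependent, radius), so a change of variables and the decay of $\Delta\varphi$ at infinity give $\int_{Q_\delta^t}|\Delta V_{\delta,a}|^{(r+1)/p}\,dx\geq c\,\delta^{\rho}$, where the value of $\rho$ — and whether a logarithmic factor $|\log\delta|$ occurs — is governed by the integrability of $|\Delta\varphi|^{(r+1)/p}$ over $\mathbb R^N$, i.e.\ by the position of $r$ relative to $\tfrac{2}{N-2}$ and of $r+1$ relative to $\tfrac{p+1}{q+1}$; these are precisely the alternatives splitting $f_2$ and $f_3$ into their branches, while $p$ relative to $p_*(N)$ and to $\tfrac{N^2+2N-4}{N^2-4N+4}$ selects which of $f_1,f_2,f_3$ applies.

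\emph{Step 3 (assembling the cases).} Inserting the two estimates into the inequality of Step 1 reproduces, in the regimes $p_*(N)<p\leq\tfrac{N^2+2N-4}{N^2-4N+4}$ and $p>\tfrac{N^2+2N-4}{N^2-4N+4}$, exactly the right-hand sides $f_2(t)$ and $f_3(t)$, once $c_1$ and $c_2$ are fixed. In the remaining regime $p\leq p_*(N)$ — where $(f_7)$ only requires $p>\tfrac{2}{N-2}$ — a comparison of the $\delta$-exponents shows $\rho<\tfrac{N(r+1)}{r(p+1)}$, hence for small $\delta$ the negative term dominates the positive correction and the right-hand side stays strictly below $t^{(p+1)/p}S=f_1(t)$, uniformly for $t\in[m,\overline{m}]$. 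Fixing the constants in the order $c_1,c_2$, then $0<m<\overline{m}$, then $\delta_0\in(0,1)$ sufficiently small, yields the claimed strict inequality for all $t\in[m,\overline{m}]$ and $\delta\in(0,\delta_0)$.

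\emph{Main obstacle.} The delicate part is the exponent bookkeeping in Step 2: computing the exact orders of the cut-off corrections for $\int_\Omega|\Delta U_{\delta,a}|^{(p+1)/p}\,dx$ and $|U_{\delta,a}|_{q+1}^{q+1}$, the lower bound for $\int_{Q_\delta^t}|\Delta V_{\delta,a}|^{(r+1)/p}\,dx$ including the logarithmic borderline cases, and checking that the dimensional hypotheses $(f_7)$ are exactly what makes the negative correction asymptotically dominant in the $f_1$ regime — or faithfully recorded by $f_*$ in the $f_2,f_3$ regimes — all uniformly for $t$ in the compact interval $[m,\overline{m}]$.
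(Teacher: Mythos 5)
Your proposal follows the same underlying approach as the paper's proof, which is just a one-line appeal to the arguments of \cite[Lemma 4.9]{guimaraes2023hamiltonian} combined with $(f_6)$ and $(f_7)$: split $\Omega$ into the region where $|t\,\Delta V_{\delta,a}|$ is below/above the threshold $t_0$, apply $(f_1)$ on the small-argument piece and $(f_6)$ on the large-argument piece, and then estimate the resulting integrals by the known Brezis–Nirenberg-type asymptotics of the instanton family $V_{\delta,a}$ (indeed this $P_\delta/Q_\delta$ decomposition is precisely the one the paper itself uses two lines earlier in Lemma~\ref{tdeltaltd}). The only thing you leave undone — as you yourself flag — is the exponent bookkeeping in Step~2: you assert, without computing, that the error in $\int_\Omega|\Delta V_{\delta,a}|^{(p+1)/p}$ is of order at least $\delta^{N(r+1)/(r(p+1))}$ so it can be absorbed into the $c_1$-term, that $\int_{Q_\delta^t}|\Delta V_{\delta,a}|^{(r+1)/p}$ has the lower bound $c\,\delta^{N(p-r)/(p+1)}$ (with the logarithmic borderline correction), and that in the $p\leq p_*(N)$ regime the negative exponent is strictly smaller so $f_1$ alone suffices; these claims are exactly what $(f_7)$ and the branch conditions on $r$ and $p$ are designed to guarantee, and confirming them against the cited lemma is what remains to make the proof complete. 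Since the paper itself defers precisely these computations to the external reference, this does not constitute a divergence in approach, only an unfinished verification that you have clearly acknowledged.
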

	\begin{proof}
		The result follows by combining the assumptions ($f_6$) and ($f_7$) with the arguments from \cite[Lemma 4.9]{guimaraes2023hamiltonian}.
	\end{proof}
	

	Now we are in position to state and prove the main result of this subsection.
	
	\begin{prop}\label{prop-mplevel}
		Under assumption of Theorem \ref{theo3}, the Mountain Pass Level $c$ of $I_F$ satisfies \eqref{level}.
	\end{prop}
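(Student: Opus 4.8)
The plan is to mimic the classical Brezis--Nirenberg argument by using the family of test functions $V_{\delta,a}$ constructed above and showing that the supremum of $I_F$ along the ray $t \mapsto tV_{\delta,a}$ stays strictly below the critical threshold $\tfrac{2}{N}S^{\frac{pN}{2(p+1)}}$ for $\delta$ small. Since the Geometric Condition is already available from Proposition \ref{prop-mpgeometry} and $I_F(tV_{\delta,a}) \to -\infty$ as $t \to \infty$, the path $\gamma(t) := t\,T\,V_{\delta,a}$ (with $T$ large enough that $I_F(\gamma(1)) < 0$) is admissible, so $c \le \max_{t \ge 0} I_F(tV_{\delta,a})$. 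The positivity $c>0$ follows from the Geometric Condition exactly as noted before \eqref{level}. Thus everything reduces to estimating $\max_{t\ge 0} I_F(tV_{\delta,a})$ from above.

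First I would localize the maximizer: by Lemma \ref{tdeltaltd} the maximum of $I_F(tV_{\delta,a})$ is attained at some $t_\delta \in [b,\overline b]$ uniformly in $\delta \in (0,\delta_0)$, so it suffices to bound $I_F(tV_{\delta,a})$ for $t$ ranging over a fixed compact interval $[m,\overline m]$ containing all the $t_\delta$ (shrinking $\delta_0$ if necessary so that $[b,\overline b]\subset[m,\overline m]$). Next, using $(f_4)$ in the superlinear form $F(x,t)\ge \tfrac{1}{s+1}f(x,t)t$ one does \emph{not} directly get an upper bound, so instead I would write
\begin{equation*}
I_F(tV_{\delta,a}) = \int_\Omega F(x,t\Delta V_{\delta,a})\,dx - \frac{\mu\, t^{s+1}}{s+1}|V_{\delta,a}|_{s+1}^{s+1} - \frac{t^{q+1}}{q+1},
\end{equation*}
and control $\int_\Omega F(x,t\Delta V_{\delta,a})\,dx$ from above by $\tfrac{p}{p+1}\int_\Omega f(x,t\Delta V_{\delta,a})(t\Delta V_{\delta,a})\,dx$ via $(f_6)$ (valid for $|t\Delta V_{\delta,a}|\ge t_0$, with the contribution from the complementary region being $O(\delta^{\gamma})$ for some $\gamma>0$ after rescaling, since there $|\Delta V_{\delta,a}|$ is bounded and the measure of that set shrinks). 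Then Lemma \ref{estimateFtv} replaces $\int_\Omega f(x,t\Delta V_{\delta,a})(t\Delta V_{\delta,a})\,dx$ by $f_*(t)$, yielding
\begin{equation*}
I_F(tV_{\delta,a}) \le \frac{p}{p+1} f_*(t) - \frac{\mu\, t^{s+1}}{s+1}|V_{\delta,a}|_{s+1}^{s+1} - \frac{t^{q+1}}{q+1} + O(\delta^{\gamma}).
\end{equation*}

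The remaining step is the calculus estimate: for each of the three cases defining $f_*$, one analyzes the one-variable function on the right-hand side. In the model case $p \le p_*(N)$ one has $f_*(t) = t^{\frac{p+1}{p}}S$, and maximizing $\tfrac{p}{p+1}S\,t^{\frac{p+1}{p}} - \tfrac{t^{q+1}}{q+1}$ over $t \ge 0$ gives exactly the threshold value $\tfrac{2}{N}S^{\frac{pN}{2(p+1)}}$ (using $p$ on the critical hyperbola, i.e. the identity $\tfrac{pN}{2(p+1)} = (1 - \tfrac{1}{q+1}\tfrac{p+1}{p})^{-1}$); then the strictly negative perturbation term $-\tfrac{\mu t^{s+1}}{s+1}|V_{\delta,a}|_{s+1}^{s+1}$, which is bounded below away from zero on $[m,\overline m]$ (since $|V_{\delta,a}|_{s+1}$ does not vanish — here the dimensional restriction $s+1 > q-p$, equivalently the sign/size conditions in $(f_5)$ or $(f_7)$, guarantees $|V_{\delta,a}|_{s+1}^{s+1}$ stays bounded below, possibly up to a $\log$ factor absorbed into the estimate), beats the error term $O(\delta^\gamma)$ for $\delta$ small. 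In the other two cases the extra terms $c_1 t^{\frac{r+1}{r}}\delta^{\theta_1} - c_2 t^{\#}\delta^{\theta_2}$ in $f_*$ must be shown to have a net sign/size that, together with the $-\tfrac{\mu t^{s+1}}{s+1}|V_{\delta,a}|^{s+1}_{s+1}$ term, keeps the maximum strictly below threshold; this is precisely where the comparison of exponents $\theta_1,\theta_2$ with $\tfrac{N(r+1)}{r(p+1)}$ and the hypotheses of $(f_7)$ enter, and it parallels \cite[Lemma 4.9 and the proof of the main theorem]{guimaraes2023hamiltonian}. The main obstacle is this last bookkeeping: one must verify, case by case in $(f_7)$, that the negative $\delta$-power term in $f_*$ dominates the positive $c_1 t^{(r+1)/r}\delta^{\theta_1}$ term (or is dominated by the $\mu$-perturbation) at the relevant scale — an elementary but delicate comparison of several exponents in $N,p,q,r$, which I would organize by first recording the explicit maximizer of the unperturbed functional and then inserting the perturbations as lower-order corrections.
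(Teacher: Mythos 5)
Your overall strategy coincides with the paper's: use the test functions $V_{\delta,a}$ to bound the Mountain Pass level, localize $t_\delta$ on a compact interval via Lemma~\ref{tdeltaltd}, control $\int F$ via $(f_6)$, invoke Lemma~\ref{estimateFtv} to replace $\int f(x,\cdot)\cdot$ by $f_*(t)$, and then perform exponent bookkeeping as in \cite{guimaraes2023hamiltonian}. The main presentational difference is that the paper uses the identity $t_\delta^{q+1}=\int f(x,t_\delta\Delta V_{\delta,a})t_\delta\Delta V_{\delta,a}\,dx-\mu t_\delta^{s+1}|V_{\delta,a}|_{s+1}^{s+1}$ from \eqref{I'tv} to eliminate the $t^{q+1}$ term and land directly on
\[
\max_{t\ge 0} I_F(tV_{\delta,a})\le \frac{2}{N}\int_\Omega f(x,t_\delta\Delta V_{\delta,a})t_\delta\Delta V_{\delta,a}\,dx-\frac{\mu(q-s)}{(q+1)(s+1)}t_\delta^{s+1}|V_{\delta,a}|_{s+1}^{s+1},
\]
and then shows $t_\delta<S^{\frac{p}{pq-1}}$ so that the first term is already below threshold; you keep the $-t^{q+1}/(q+1)$ term and do the direct one-variable maximization, which is equivalent but a bit messier. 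That is a legitimate alternative.

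One factual point should be corrected: you claim that $|V_{\delta,a}|_{s+1}^{s+1}$ \emph{stays bounded below away from zero}. This is false. Since $V_{\delta,a}$ is normalized in $L^{q+1}$ and concentrates at the point $a$ as $\delta\to 0$, its $L^{s+1}$-norm (for $s+1<q+1$) decays like a positive power of $\delta$. The dimensional restrictions in $(f_5)$/$(f_7)$ do not prevent this decay; they ensure that the decay rate of the perturbation term $\mu t^{s+1}|V_{\delta,a}|_{s+1}^{s+1}$ is \emph{slower} than the decay rate of the error terms coming from $(f_6)$ outside $\{|t\Delta V_{\delta,a}|\ge t_0\}$ and from the $\delta$-powers inside $f_*$. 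The net comparison of rates is exactly the point of the case-by-case analysis you correctly flag as the main obstacle. Aside from that misstatement, your identification of the region where $(f_6)$ fails pointwise and the resulting $O(\delta^\gamma)$ correction is a genuine subtlety that the paper's exposition glosses over but that must indeed be accounted for.
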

	\begin{proof}
		From  $(f_6)$ and \eqref{I'tv}, we get
		\begin{eqnarray*}
			\max_{t\geq0} I_F(tV_{\delta,a})& =&  I_F(t_\delta V_{\delta,a})\\
			&=&  \int_\Omega F(x,t_\delta \Delta V_{\delta,a})dx-\frac{t_\delta^{q+1}}{q+1} - \frac{\mu t_\delta^{s+1}}{s+1} |V_{\delta,a}|^{s+1}_{s+1}\\
			&\leq&\int_\Omega  \frac{p}{p+1}f(x,t_\delta \Delta V_{\delta,a})t_\delta\Delta V_{\delta,a} dx -\frac{t_\delta^{q+1}}{q+1}- \frac{\mu t_\delta^{s+1}}{s+1}|V_{\delta,a}|^{s+1}_{s+1} \\
			&=& \frac{2}{N}\int_\Omega f(x,t_\delta \Delta V_{\delta,a})t_\delta\Delta V_{\delta,a} dx  - \frac{\mu(q-s)}{(q+1)(s+1)}t_\delta^{s+1}|V_{\delta,a}|^{s+1}_{s+1},
		\end{eqnarray*}
		Now, Lemma \ref{estimateFtv} and the expression \eqref{I'tv} allow us to repeat the arguments used in \cite{guimaraes2023hamiltonian} to deduce that
		$t_\delta< S^\frac{p}{pq-1}$. Hence,
		$$\max_{t\geq0} I_F(tV_{\delta,a})< \frac{2}{N}S^\frac{pN}{2(p+1)},
		$$
		which implies that $c$ satisfies \eqref{level}.
	\end{proof}
	
	\subsection{Proof of Theorems \ref{theo2} and \ref{theo3}: Conclusion}
	By Proposition \ref{propcompacidade} and Proposition \ref{prop-mpgeometry}, the functional $I_F$ satisfies all the hypothesis of Theorem \ref{MP}. Hence, $I_F$ has at least one critical point and consequently, \eqref{prob}-\eqref{navi} and \eqref{prob}-\eqref{Diri} have at least a weak solution.

	\section{Connections with Hamiltonian systems}\label{sec-connecHam}
	
	It is well-known that \eqref{prob}-\eqref{navi} has a natural connection with Hamiltonian systems of elliptic type, which have been extensively studied in the literature due to their rich variational structure and the qualitative properties of their solutions. In this section, we highlight how our results can impact the quantitative analysis of Hamiltonian systems in different scenarios.
	
	Let $f(x,t)=f(t)$ be an increasing function satisfying $(f_1)$--$(f_4)$, and let $u \in E_p$ be a weak solution of \eqref{prob}-\eqref{navi}. From \cite[Section 4]{ederson-JMAA}, the function $-v:=f(\Delta u)$ belongs to $W^{2, \frac{q+1}{q}}(\Omega) \cap W_0^{1, \frac{q+1}{q}}(\Omega)$, and the pair $(u,v)$ is a strong solution of the system
	\begin{equation*}
		\tag{HS}\label{sist}
		\left\{
		\begin{array}{lll}
			-\Delta u = f^{-1}(v) & \text{ in } \Omega,\\
			-\Delta v = \mu g(x)|u|^{s-1}u + |u|^{q-1}u & \text{ in } \Omega,\\
			u = v = 0 & \text{ on } \partial\Omega.
		\end{array}
		\right.
	\end{equation*}	
	This connection between \eqref{prob}-\eqref{navi} and \eqref{sist}, established through the transformation $v=-f(\Delta u)$, preserves the standard variational structure, which is crucial for applying our variational approach to improve the regularity of the solutions of \eqref{prob}-\eqref{navi}. To illustrate this claim, we consider a concrete realization of $f$ and discuss its corresponding Hamiltonian system.
	
	For every $\lambda>0$, we set
	\[
	\mathscr{F}_\lambda(t) := \lambda |t|^{r-1}t + |t|^{p-1}t.
	\]  
	From \cite{guimaraes2023hamiltonian}, the function \( f = \mathscr{F}_\lambda^{-1} \) satisfies the structural assumptions \((f_1)-(f_4)\) and by construction, any weak solution \( u \) of \eqref{prob}-\eqref{navi} corresponds to a strong solution \((u,v)\) of the following system:
	\begin{equation*}
		\label{sist1}
		\left\{
		\begin{array}{lll}
			-\Delta u = \lambda |v|^{r-1}v + |v|^{p-1}v & \text{in } \Omega,\\
			-\Delta v = \mu |u|^{s-1}u + |u|^{q-1}u & \text{in } \Omega,\\
			u = v = 0 & \text{on } \partial\Omega,
		\end{array}
		\right.
	\end{equation*}
	where \( \mu > 0 \) and the exponents \( p, q, r, s \) satisfy the conditions assumed in the present work. In this sense, our results not only complement the existing literature by addressing the case of \emph{sublinear unilateral perturbations} of the Lane-Emden critical system,
	$$\begin{cases}
		-\Delta u = |v|^{p-1}v & \text{in } \Omega,\\
		-\Delta v = |u|^{q-1}u & \text{in } \Omega,\\
		u = v = 0 & \text{on } \partial\Omega,
	\end{cases}$$
	but also extend the analysis developed in \cite{guimaraes2023hamiltonian} by establishing the existence of infinitely many weak solutions when at least one of the nonlinearities is sublinear. This contribution fills a natural gap in the study of such Hamiltonian systems, offering new insights into the multiplicity and qualitative behavior of solutions in the presence of lower-order perturbations. We remark, however, that the borderline case in which the exponents satisfy $r=\tfrac{1}{q}$ and $s=\tfrac{1}{p}$ remains open and poses additional challenges for future investigation.
	
	In the superlinear case, we can adopt the same strategy developed in \cite{guimaraes2023hamiltonian} to obtain weak/strong solutions of \eqref{sist1}  for every $N\geq 4$, or for $N=3$ and $ p\in \left(2,\frac{7}{2}\right]\cup \left(8,\infty\right)$.

	\vspace{3mm}
	
	\noindent {\bf Classical solutions.} When \( r > 1 \), we may adapt the ideas from \cite[Section~3]{hulshofvandervorst-93} to show that \( u, v \in L^\theta \) for all \( 1 \leq \theta < \infty \). By applying classical regularity theory for second-order elliptic equations, it follows that \( (u, v) \in C^{2,\sigma}(\overline{\Omega}) \times C^{2,\sigma}(\overline{\Omega}) \) for some \( \sigma \in (0,1) \) depending on \( p \) and \( q \). This additional regularity implies that the weak solutions of \eqref{prob}-\eqref{navi} is, in fact, a classical one. 
	
	When \( p \) and \( q \) lie below the critical hyperbola, that is,
	\[
	\frac{1}{p+1} + \frac{1}{q+1} > \frac{N-2}{N},
	\]
	the functional \( I_F \) satisfies the \((PS)\) condition on the whole space. This case was addressed in \cite{AgudeloRufVelez} for sublinear perturbations and several results concerning the existence and nonexistence of solutions were achieved. In this context, our variational approach remains applicable and ensures the existence of infinitely many solutions. Furthermore, by adapting the arguments from \cite[Section~4]{ederson-JMAA}, one can also prove that \( (u, v) \) is a classical solution of \eqref{sist1}. However, in the critical case, the standard bootstrap argument fails to establish the regularity \( u \in L^\theta\) for all \( \theta \geq 1 \).

	\vspace{3mm}
	
	\noindent {\bf Positivity and symmetry.} In such cases where classical regularity holds, one can follow the argument developed in \cite{guimaraes2023hamiltonian} to show that the solution associated with the critical value $c_1$ is, in fact, positive. Additionally, if $\Omega = B_r(0)$ is a ball centered at the origin, it is possible to use the arguments from \cite{EdJe-djairo} to prove that the corresponding solution is radial. These qualitative properties emphasize the significance of the first critical level and reinforce the structure and symmetry of the solutions obtained through the variational method.

\subsection*{Funding} The work of the first and last authors are supported by NSF of China, Nos. W2533024 and 12171087, respectively.

\subsection*{Disclosure of interest}
The authors report there are no competing interests to declare.\\

	\bibliographystyle{siam}
	
	\bibliography{refs.bib}

\begin{thebibliography}{10}

\bibitem{AgudeloRufVelez}
{\sc O.~Agudelo, B.~Ruf, and C.~Veléz}, {\em On a hamiltonian elliptic system
  with concave and convex nonlinearities}, Discrete Contin. Dyn. Syst., 16
  (2023), pp.~2902--2918.

\bibitem{ambrosetti1994combined}
{\sc A.~Ambrosetti, H.~Brezis, and G.~Cerami}, {\em Combined effects of concave
  and convex nonlinearities in some elliptic problems}, J. Funct. Anal., 122
  (1994), pp.~519--543.

\bibitem{Ambrosetti-Rabinowitz}
{\sc A.~Ambrosetti and P.~H. Rabinowitz}, {\em Dual variational methods in
  critical point theory and applications}, J. Funct. Anal., 14 (1973),
  pp.~349--381.

\bibitem{bisci2014multiple}
{\sc G.~M. Bisci and D.~Repovš}, {\em Multiple solutions of p-biharmonic
  equations with {N}avier boundary conditions}, Complex Var. Elliptic Equ., 59
  (2014), pp.~271--284.

\bibitem{bonheure2014hamiltonian}
{\sc D.~Bonheure, E.~M. dos Santos, and H.~Tavares}, {\em Hamiltonian elliptic
  systems: a guide to variational frameworks}, Port. Math., 71 (2014),
  pp.~301--395.

\bibitem{MR3842319}
{\sc M.-M. Boureanu}, {\em Fourth-order problems with {L}eray-{L}ions type
  operators in variable exponent spaces}, Discrete Contin. Dyn. Syst., 12
  (2019), pp.~231--243.

\bibitem{brezislieb}
{\sc H.~Brézis and E.~Lieb}, {\em A relation between pointwise convergence of
  functions and convergence of functionals}, Proc. Amer. Math. Soc., 88 (1983),
  pp.~486--490.

\bibitem{B-N}
{\sc H.~Brézis and L.~Nirenberg}, {\em Positive solutions of nonlinear
  elliptic equations involving critical {S}obolev exponents}, Comm. Pure Appl.
  Math., 36 (1983), pp.~437--477.

\bibitem{Castro1980}
{\sc A.~Castro}, {\em Métodos Variacionales y Análisis Funcional no Lineal},
  in: X Colóquio Colombiano de Matemáticas, 1980.

\bibitem{Chung02112022}
{\sc N.~T. Chung and K.~Ho}, {\em On a {$p(\cdot)$}-biharmonic problem of
  {K}irchhoff type involving critical growth}, Appl. Anal., 101 (2022),
  pp.~5700--5726.

\bibitem{masomurat}
{\sc G.~Dal~Maso and F.~Murat}, {\em Almost everywhere convergence of gradients
  of solutions to nonlinear elliptic systems}, Nonlinear Anal., 31 (1998),
  pp.~405--412.

\bibitem{do2025hamiltonian}
{\sc M.~do~Desterro, E.~Gloss, and B.~Ribeiro}, {\em Hamiltonian systems in two
  dimensions by variational methods in nonreflexive spaces}, NoDEA Nonlinear
  Differential Equations Appl., 32 (2025), pp.~1--25.

\bibitem{doO2003}
{\sc J.~M. do~Ó and P.~Ubilla}, {\em A multiplicity result for a class of
  superquadratic {H}amiltonian systems}, Electron J. Differential Equations,
  2003 (2003), p.~Paper No. 15.

\bibitem{ederson-JMAA}
{\sc E.~M. dos Santos}, {\em Multiplicity of solutions for a fourth-order
  quasilinear nonhomogeneous equation}, J. Math. Anal. Appl., 342 (2008),
  pp.~277--297.

\bibitem{Santos}
\leavevmode\vrule height 2pt depth -1.6pt width 23pt, {\em Positive solutions
  for a fourth-order quasilinear equation with critical {S}obolev exponent},
  Commun. Contemp. Math., 12 (2010), pp.~1--33.

\bibitem{ederson}
{\sc E.~M. dos Santos}, {\em Positive solutions for a fourth-order quasilinear
  equation with critical {S}obolev exponent}, Commun. Contemp. Math., 12
  (2010), pp.~1--33.

\bibitem{drabek2001global}
{\sc P.~Drabek and M.~Otani}, {\em Global bifurcation result for the-biharmonic
  operator}, Electron J. Differential Equations, 2001 (2001), pp.~Paper--No.

\bibitem{Dridi2025}
{\sc B.~Dridi and A.~A.~B. Ali}, {\em Existence of ground state solutions for a
  logarithmic weighted $p$-biharmonic problem via nehari method}, J. Elliptic
  Parabol. Equ.,  (2025), pp.~1--24.

\bibitem{EdmundsFortunatoJannelli}
{\sc D.~E. Edmunds, D.~Fortunato, and E.~Jannelli}, {\em Critical exponents,
  critical dimensions and the biharmonic operator}, Arch. Ration. Mech. Anal.,
  112 (1990), pp.~269--289.

\bibitem{el2002spectrum}
{\sc A.~El~Khalil, S.~Kellati, and A.~Touzani}, {\em On the spectrum of
  the-biharmonic operator}, Electron J. Differential Equations, 2002 (2002),
  pp.~161--170.

\bibitem{fang2016nontrivial}
{\sc F.~Fang and C.~Ji}, {\em Nontrivial solutions for semilinear elliptic
  systems via {O}rlicz-{S}obolev theory}, Complex Var. Elliptic Equ., 61
  (2016), pp.~524--537.

\bibitem{AP1}
{\sc J.~Garcia~Azorero and I.~Peral~Alonso}, {\em Multiplicity of solutions for
  elliptic problems with critical exponent or with a nonsymmetric term}, Trans.
  Amer. Math. Soc., 323 (1991), pp.~877--895.

\bibitem{guimaraes2023hamiltonian}
{\sc A.~Guimarães and E.~M. dos Santos}, {\em On hamiltonian systems with
  critical {S}obolev exponents}, J. Differential Equations, 360 (2023),
  pp.~314--346.

\bibitem{hulshofvandervorst-93}
{\sc J.~Hulshof and R.~van~der Vorst}, {\em Differential systems with strongly
  indefinite variational structure}, J. Funct. Anal., 114 (1993), pp.~32--58.

\bibitem{HulshofVan96}
{\sc J.~Hulshof and R.~C. A.~M. Van~der Vorst}, {\em Asymptotic behaviour of
  ground states}, Proc. Amer. Math. Soc., 124 (1996), pp.~2423--2431.

\bibitem{ji2012p}
{\sc C.~Ji and W.~Wang}, {\em On the p-biharmonic equation involving
  concave-convex nonlinearities and sign-changing weight function}, Electron.
  J. Qual. Theory Differ. Equ., 2012 (2012), pp.~1--17.

\bibitem{Kefi2018}
{\sc K.~Kefi and K.~Saoudi}, {\em On the existence of a weak solution for some
  singular $p(x)$-biharmonic equation with navier boundary conditions}, Adv.
  Nonlinear Anal., 8 (2018), pp.~1171--1183.

\bibitem{Leray1965}
{\sc J.~Leray and J.-L. Lions}, {\em Quelques résultats de {V}išik sur les
  problèmes elliptiques non linéaires par les méthodes de
  {M}inty-{B}rowder}, Bull. Soc. Math. France, 93 (1965), pp.~97--107.

\bibitem{lions}
{\sc P.-L. Lions}, {\em The concentration-compactness principle in the calculus
  of variations. the limit case, part 1}, Rev. Mat. Iberoam, 1 (1985),
  pp.~145--201.

\bibitem{EdJe-djairo}
{\sc J.~L.~F. Melo and E.~M. dos Santos}, {\em Critical and noncritical regions
  on the critical hyperbola}, in Contributions to Nonlinear Elliptic Equations
  and Systems, vol.~86 of Progr. Nonlinear Differential Equations Appl.,
  Birkhäuser, 2015, pp.~345--370.

\bibitem{R2}
{\sc P.~H. Rabinowitz}, {\em Minimax methods in critical point theory with
  applications to differential equations}, no.~65, Amer. Math. Soc., 1986.

\bibitem{Timoshenko-Woio}
{\sc S.~P. Timoshenko and S.~Woinowsky-Krieger}, {\em Theory Of Plates \&
  Shells}, McGraw - Hill international editions, second~ed., 1959.

\bibitem{wang2008nonuniformly}
{\sc W.~Wang and P.~Zhao}, {\em Nonuniformly nonlinear elliptic equations of
  p-biharmonic type}, J. Math. Anal. Appl., 348 (2008), pp.~730--738.

\end{thebibliography}

\end{document}